\numberwithin{equation}{section}
\newtheorem{theorem}{Theorem}[section]
\newtheorem{lemma}[theorem]{Lemma}
\newtheorem{proposition}[theorem]{Proposition}
\newtheorem{corollary}[theorem]{Corollary}
\theoremstyle{definition}
\newtheorem{remark}[theorem]{Remark}
\begin{document}

\title[Strichartz estimates for the sublaplacian]{Strichartz estimates 
for the Schr\"odinger equation  for the sublaplacian
%for the sublaplacian 
on complex spheres }
\author{Valentina Casarino and Marco M. Peloso}
\address{DTG\\
Universit\`a degli Studi di Padova\\
Stradella san Nicola 3\\
I-36100 Vicenza}
\email{valentina.casarino@unipd.it}
\address{Dipartimento di Matematica\\
Universit\`a degli Studi di Milano\\
Via C. Saldini 50\\
I-20133 Milano}
\email{marco.peloso@unimi.it}
\thanks{}
\keywords{
Schr\"odinger equation, Strichartz estimates, complex spheres,
sublaplacian, dispersive estimates.} 
\subjclass{35Q41 (43A85, 35B65, 33C55) }
\date{\today}

\maketitle

\begin{abstract}
In this paper we consider the sublaplacian $\mathcal L$
on the unit complex sphere  $S^{2n+1}\subset {\mathbf{C}}^{n+1}$,
equipped with its natural CR structure,
and derive
Strichartz estimates 
with fractional loss of derivatives for the solutions of the free
Schr\"odinger
equation associated with $\mathcal L$.
Our results are stated in terms of certain Sobolev-type spaces,
that measure the
regularity of functions on $S^{2n+1}$   differently according to their spectral
localization.
Stronger conclusions are obtained for particular classes of solutions, 
corresponding to  initial data whose spectrum 
is contained in a proper cone of ${\mathbf{N}}^2$.

\end{abstract}

\bigskip
\section{Introduction and statement of the main results}\medskip
In the last two decades the dispersive properties 
of evolution equations
have been extensively investigated 
and successfully applied in different contexts, such as the local and
global existence for non-linear equations, the well-posedness  theory 
in Sobolev spaces and the scattering theory. 
Dispersive and smoothing properties are now essentially well understood 
for the Schr\"odinger equation in the Euclidean setting, where both
$L^p-L^q$
bounds and  Strichartz estimates 
have been proved for a wide class of linear and non-linear problems.
 
In this paper, we
study the dispersive properties 
of the free  Schr\"odinger equation associated with the sublaplacian $\mathcal L$ on 
the unit
complex sphere $S^{2n+1}$  in ${\mathbf{C}}^{n+1}$, $n\ge 1$,
\begin{equation}
\label{schreq}
\begin{cases} 
i\partial_{t} v+\mathcal L v= 0
\cr
v(0,z)=v_0\,,
\end{cases}
\end{equation}
where $v_0\in L^2 (S^{2n+1})$ and
$\mathcal L$
denotes the sublaplacian, that is,
the operator  defined by 
\begin{equation}\label{sublap}
\mathcal L:= -\sum_{1\le j<k\le n+1}M_{jk}\overline{M}_{jk} + \overline{M}_{jk}M_{jk}\, ,
\end{equation}
with
$M_{jk}:=\overline{z}_j
\partial_{z_k}-\overline{z}_k \partial_{z_j}$.
The operator $\mathcal L$ is a densely defined, self-adjoint, positive,
and subelliptic operator on $S^{2n+1}$ \cite{Geller} and
it 
coincides with  the real part of the Kohn--Laplacian acting on
functions \cite{Lee}; see also \cite{MPR}. 
The sublaplacian $\mathcal L$ 
 may be considered as the subriemannian analogue
of the Laplace--Beltrami operator on a 
Riemannian manifold, see e.g. \cite{Jer-Lee}. \medskip

Our main result is a Strichartz estimate 
for the solution $v$ of \eqref{schreq}.

Strichartz estimates
are  a family of space-time bounds on solutions of \eqref{schreq}, 
which provide  a useful tool to control the norm of the solutions.
In particular, using the notation
$L^p_t\,L^q_x:=
L^p (I_t, L^q({\mathbf{R}}^n_x))$,
we bound the $L^p_t \,L^q_x$ norm
of  $v$ 
by means of a suitable mixed Sobolev norm, denoted by
$\|v_0\|_{{\mathcal X}^{(r,s)}}$, of the initial datum.  

In order to describe the mixed Sobolev spaces ${{\mathcal X}^{(r,s)}}$, we
start from  the classical 
 decomposition of the space of square integrable functions on
$S^{2n+1}$
\begin{equation}\label{dec-L2}
L^2 (S^{2n+1})=\bigoplus_{\ell,\ell'=0}^{\infty}
\mathcal{H}^{\ell,\ell'}
\, ,
\end{equation}
$\mathcal{H}^{\ell,\ell'}$ being the space of complex spherical harmonics of bidegree
$(\ell,\ell')$, \cite[Ch.~11]{ViK}.

This decomposition
is the joint spectral decomposition of $\mathcal L$ and the Laplace--Beltrami
operator $\Delta$ on $S^{2n+1}$, since the subspaces
$\mathcal{H}^{\ell,\ell'}$ are  eigenspaces both for $\Delta$ with eigenvalue 
$\mu_{\ell,\ell'}= (\ell+\ell')(\ell+\ell'+2n)$,
and for $\mathcal L$ with  eigenvalue
$\lambda_{\ell,\ell'}= 2\ell\ell'+n(\ell+\ell')$.

Now fix $M>1$ and define
\begin{equation}\label{cV}
\mathcal V=\big\{(\ell,\ell')\in{\mathbf{N}}^2:\, \ell/M <\ell'< M\ell\big\}\, ,
\end{equation}
and $\mathcal E$ to be its complementary region in ${\mathbf{N}}^2$.
Observe that
when $(\ell,\ell')\in\mathcal V$
then 
$\mu_{\ell,\ell'}\approx\lambda_{\ell,\ell'}$,
while if
$(\ell,\ell')\in\mathcal E$, that is, 
if $\ell'\le \ell/M$
or 
$M\ell\le\ell'$, then
$\mu_{\ell,\ell'}$ grows as $\max(\ell,\ell')^2$, while
$\lambda_{\ell,\ell'}$ varies between $\max(\ell,\ell')$ and $\max(\ell,\ell')^2$. 

Hence,
we are led to introduce appropriate Sobolev-type spaces
that measure the
regularity of functions differently according to their spectral
localization.  For $r\ge0$ we denote by
$W^r (S^{2n+1})$  the standard non-isotropic Sobolev space, for
instance 
defined as the image of $L^2 (S^{2n+1})$ under $(I+\mathcal L)^{-r/2}$.

We now define 
 $\mathcal X^{(r,s)}_M(S^{2n+1})$ 
as the space of all functions 
$u\in L^2 (S^{2n+1} )$, 
spectrally decomposed as 
$u=\sum_{\ell,\ell'=0}^{\infty}u_{\ell,\ell'}$, $u_{\ell,\ell'}\in\mathcal{H}^{\ell,\ell'}$,
such that 
$$ \sum_{\ell/M < \ell' < M\ell  } u_{\ell,\ell'} \in W^r (S^{2n+1})\, ,
$$
while the complementary sums 
$$ 
\sum_{ \ell'\le \ell/M\,  } u_{\ell,\ell'}\,, \sum_{\ell'\ge M \ell} u_{\ell,\ell'} \in W^s (S^{2n+1})\,.
$$

It may be seen as a natural fact that we need to consider a
two-parameter scale for the Sobolev spaces, since
\eqref{dec-L2} is a two-indices decomposition of $L^2(S^{2n+1})$.

The Strichartz estimates that we are able to prove for
solutions of \eqref{schreq}
are expressed in terms of $\mathcal X^{(r,s)}_M$-norms
and are derived as a consequence of some kind of dispersive estimates.
We denote by $Q:=2n+2$ the {\em homogeneous dimension} of $S^{2n+1}$
(see also Section \ref{prelim}).

\begin{theorem}\label{strichartz}
Let $S^{2n+1}$ denote the unit complex sphere in ${\mathbf{C}}^{n+1}$
and let $\mathcal L$ be the sublaplacian, defined by
\eqref{sublap}.
Let $p\ge 2$, 
$q<+\infty$
satisfy the admissibility condition
\begin{equation}\label{condammiss}
\frac{2}{p}+\frac{Q}{q}=\frac{Q}{2}\,.
\end{equation} 
Define
\begin{equation}\label{def-sn}
s_n:=\begin{cases}
 2[1-1/(n+1)]\,, &\text{if $n>1$}\cr
4/3\,,&\text{if $n=1$}\,.\cr
 \end{cases}
\end{equation}
Let $M>1$ be fixed.  Then, if
$I$ is any finite time interval and $s\ge s_1$ or $s>s_n$ for $n>1$,
there exists a constant $C=C(s,I,M)>0$ such that any solution $v$ of \eqref{schreq} satisfies
the estimate 
\begin{equation}\label{Strichartz}
\|v\|_{L^p(I, L^q (S^{2n+1}))}\le C \, \|v_0\|_{\mathcal X^{(s/p, 2/p)}_M(S^{2n+1})}
\, .
\end{equation}
\end{theorem}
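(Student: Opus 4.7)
The plan is to reduce the Strichartz estimate to a family of dispersive estimates for the Schr\"odinger propagator $e^{it\mathcal L}$ restricted to spectrally localized data, via the standard Keel--Tao $TT^*$ machinery. Because the admissibility condition $2/p+Q/q=Q/2$ uses the homogeneous dimension $Q=2n+2$, the ``model'' dispersive bound one hopes for is $\|e^{it\mathcal L}f\|_{L^\infty}\lesssim |t|^{-Q/2}\|f\|_{L^1}$ for spectrally localized pieces, at least on a suitable time scale; this model then yields the admissible pair bound $\|e^{it\mathcal L}f\|_{L^p_tL^q_x}\lesssim\|f\|_{L^2}$ without loss. Since we are on a compact manifold there will be a loss of derivatives, encoded in the Sobolev scale $\mathcal X^{(s/p,2/p)}_M$.

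The core step is a Littlewood--Paley-type decomposition adapted to the bispectral decomposition \eqref{dec-L2}. I would first split the initial datum as $v_0=v_0^{\mathcal V}+v_0^{\mathcal E}$ according to whether $(\ell,\ell')\in\mathcal V$ or $\mathcal E$, and then further decompose each piece into dyadic frequency shells with respect to the relevant quantity. On $\mathcal V$ the two spectral parameters satisfy $\mu_{\ell,\ell'}\approx\lambda_{\ell,\ell'}\approx(\ell+\ell')^2$, so the sublaplacian is essentially comparable to $\Delta$, and the problem is close to the Burq--G\'erard--Tzvetkov Strichartz estimates on spheres for the Laplace--Beltrami operator; one expects a loss $s/p$ with $s$ controlled by $s_n$, and I would try to derive the required spectral cluster / kernel estimates either by transplantation from known results for $\Delta$ on $S^{2n+1}$ or by a direct stationary phase analysis of the zonal kernel of $e^{it\mathcal L}\Pi_{\ell,\ell'}$ expressed through Jacobi polynomials.

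On $\mathcal E$ the eigenvalue $\lambda_{\ell,\ell'}$ can be as small as $\max(\ell,\ell')$ rather than $\max(\ell,\ell')^2$, so the propagator behaves very differently from the Laplace--Beltrami case: it is closer in spirit to the Schr\"odinger equation on the Heisenberg group, where the sublaplacian exhibits a sharp loss of $1/p$ per factor of $2$ in the admissibility scale. I would exploit the fact that on $\mathcal E$ one of the indices dominates the other, reducing the sum to a one-parameter family of zonal projectors whose kernel can be written fairly explicitly, and then estimate $\|e^{it\mathcal L}\Pi_{\ell,\ell'}f\|_\infty$ against $\|f\|_1$ by a combination of a trivial $L^2\to L^\infty$ bound (using the dimension of $\mathcal H^{\ell,\ell'}$) and a dispersive bound obtained from oscillatory sums in $t$. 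The apparent loss $2/p$ in the second component of $\mathcal X^{(s/p,2/p)}_M$ is consistent with this picture.

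The main obstacle I anticipate is twofold. First, proving sharp dispersive bounds on the $\mathcal E$ region, since the kernel of $e^{it\mathcal L}$ is not well behaved uniformly in $(\ell,\ell')$ there; controlling the sum over a dyadic shell requires delicate cancellation, and the threshold $s_n$ in \eqref{def-sn} (with the distinct behaviour at $n=1$) most likely reflects the precise Bernstein/$L^p$ bounds available for joint spectral projections in that regime. Second, assembling the dyadic pieces into the global estimate \eqref{Strichartz} requires an almost-orthogonality / square-function argument compatible with the mixed norm $L^p_tL^q_x$ on $p,q$ in the admissible range with $q<\infty$; Minkowski's inequality and the Keel--Tao abstract lemma applied band by band, summed using the Sobolev weights built into $\mathcal X^{(s/p,2/p)}_M$, should close the argument once the single-band dispersive estimates with the correct loss are established.
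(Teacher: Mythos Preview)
Your high-level architecture is correct and matches the paper: split $v_0$ according to $\mathcal V$ versus $\mathcal E$, prove frequency-localized dispersive bounds for $e^{it\mathcal L}\varphi(h^2\mathcal L)$ on a short time scale, feed these into Keel--Tao band by band, then reassemble via a Littlewood--Paley square function and Minkowski. That part is fine.

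However, you have the two regions exactly reversed in terms of where the difficulty and the threshold $s_n$ live. In the paper, the $\mathcal E$ region is the \emph{trivial} part: for $|t|\le h^2$ one simply bounds the kernel in absolute value using $|Z_{\ell,\ell'}|\le d_{\ell,\ell'}/\omega_{2n+1}$ and a counting lemma for $\sum_{\lambda_{\ell,\ell'}\in(a,b]}(\ell+\ell')$, obtaining $\|K_h\|_\infty\le Ch^{-Q}\le C|t|^{-Q/2}$ with no use of oscillation whatsoever. Splitting a unit time interval into $\sim h^{-2}$ pieces then gives the $2/p$ loss on $\mathcal E$, which is no better than Sobolev embedding. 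The hard analysis, and the origin of $s_n$, is entirely in the $\mathcal V$ region: there one must push the dispersive estimate to the \emph{longer} interval $|t|\le ch^s$ with $s<2$, and the value $s_n=2[1-1/(n+1)]$ (and $4/3$ for $n=1$) emerges from a delicate estimate of the oscillatory double sum $\sum e^{it\lambda_{\ell,\ell'}}\varphi(h^2\lambda_{\ell,\ell'})\psi(\ell'/\ell)Z_{\ell,\ell'}$, using the Fitouhi--Hamza asymptotic expansion of Jacobi polynomials and a Poisson-summation lemma applied to the phase $tkk'+\omega(k'-k)\pm\theta(k+k')$.

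Relatedly, your suggestion to handle $\mathcal V$ by ``transplantation from known results for $\Delta$ on $S^{2n+1}$'' does not work: although $\lambda_{\ell,\ell'}\approx\mu_{\ell,\ell'}$ on $\mathcal V$, the phases $e^{it\lambda_{\ell,\ell'}}$ and $e^{it\mu_{\ell,\ell'}}$ are not comparable, so one cannot read off the kernel bound for $e^{it\mathcal L}$ from that of $e^{it\Delta}$. The paper proceeds by direct analysis of the zonal kernel. Your instinct to use stationary phase on the Jacobi-polynomial representation is closer to the mark, but the actual mechanism is Poisson summation in one of the indices $k,k'$ (after the change of variables $k=\ell+n/2$, $k'=\ell'+n/2$), with a careful case analysis of when $\operatorname{dist}((tk+\omega\pm\theta)/2\pi,\mathbf Z)\gtrsim tk$ holds.
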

\medskip

While there exists a vast literature on 
Strichartz estimates and their application to
the non-linear Schr\"odinger equation
for the Laplace--Beltrami operator on  Riemannian manifolds (see the
comments below), little is known in the case of the sublaplacian on CR
manifolds, even in the case of the Heisenberg group ${\mathbf{H}}_n$.
 Indeed,
since the Heisenberg group ${\mathbf{H}}_n$
is biholomorphically equivalent to the unit sphere $S^{2n+1}$
with the north pole removed via the Cayley transform, 
Theorem \ref{strichartz} should be in particular compared with results concerning the 
Schr\"odinger equation on 
${\mathbf{H}}_n$. 

 H. Bahouri, P. G\'erard and C.-J. Xu
in the seminal paper \cite{BahouriGerardXu} prove that 
no global in time dispersive estimate may hold for solutions 
of the Schr\"odinger equation on ${\mathbf{H}}_n$; see also the more recent
work by  G\'erard and S. Grellier \cite{GG1,GG2}. 
However,
the same lack of dispersion occurs
on  $S^{2n+1}$, and, in
addition, no local in time dispersive estimate
can hold, as we shall observe in Section \ref{dispersiveprima}.
Nonetheless, we are able to prove local  Strichartz estimates for the solutions of
\eqref{schreq},  by substituting the 
dispersive estimate for the Schr\"odinger
propagator $e^{it\mathcal L}$ by
a family of dispersive estimates
for the  frequency localized operator 
$e^{it\mathcal L}\varphi (h^2 \mathcal L)$.
This idea
originally appeared 
in \cite{Bahouri}
and
\cite{Tataru}, and has been successfully applied in  the work of 
Burq, G\'erard and
Tzvetkov \cite{Burq1,Burq2}. 
 In Theorem \ref{dispersive} we
 prove such  spectrally localized
dispersive estimates,
by means of a careful analysis of  the oscillation of 
the infinite sum,
depending on the two indices $\ell$ and $\ell'$,
 that defines the integral kernel of 
the operator
$e^{it\mathcal L}\varphi (h^2 \mathcal L)$.
The proof of Theorem \ref{dispersive} is quite delicate and occupies a
good portion of the present paper.
One might wonder if the same technique could apply to the Heisenberg
framework and 
this topic will be the object of further investigation.
\medskip

It is interesting to compare our results with the known ones in the
Riemannian framework. 
 Consider a Riemannian manifold $(\mathcal M,g)$ of dimension $d$ and
 the
Schr\"odinger equation
\begin{equation}
\label{schreqRiem}
\begin{cases} 
i\partial_{t} u+\Delta_g u=0
\cr
u(0,x)=u_0\,,
\end{cases}
\end{equation}
where $\Delta_g$ denotes  the Laplace--Beltrami operator on $(\mathcal M,g)$.
Then  Strichartz estimates of solutions of 
\eqref{schreqRiem}
are usually of the
form
\begin{equation}\label{Strichartzgenerale}
\|u\|_{L^p ([-T,T], L^q(\mathcal M))}\le C
\| u_0\|_{H^s (\mathcal M)}\,,
\end{equation}
where $p, q, d$ satisy the scale-invariance  
condition
\begin{equation}\label{condammissriem}
\frac{2}{p}+\frac{d}{q}=\frac{d}{2}\,.
\end{equation} 
 Here and in what follows, we denote by $H^s$  the classical
Sobolev space on $\mathcal M$, which may be  defined as the image of 
$L^2 (\mathcal M)$ under $(I+\Delta_g)^{-s/2}$.
The key ingredient to prove \eqref{Strichartzgenerale}
is given by some dispersive
estimates, that is, estimates of the $L^\infty$ norm of solutions of 
\eqref{schreqRiem}
at a fixed time $t$.
 
When $\mathcal M={\mathbf{R}}^n$,
the theory is basically well-established 
and one can choose $s=0$ and $T=\infty$
in \eqref{Strichartzgenerale}, 
thanks to the essential contributions by
Strichartz, 
Ginibre and Velo, and
Keel and Tao \cite{Strichartz, GinibreVelo, KeelTao}.

When $\mathcal M$ is a generic Riemannian
manifold, the situation is more involved
and  the geometry, as  it is well known, plays an essential r\^ole.

On compact manifolds  the dispersive effect is generally  weak;
nonetheless, 
Burq, G\'erard and Tzvetkov,
generalizing the earlier work of J. Bourgain
on tori \cite{Bou1,Bou2}, 
proved 
an estimate like 
\eqref{Strichartzgenerale}
on any compact and boundaryless manifold $\mathcal M$, 
with $s=1/p$, with a loss of derivatives with respect to the flat 
Euclidean case, but again with a gain of $1/p$ 
derivatives in comparison  to the bounds indicated by Sobolev embeddings \cite{Burq2}.
Later, Blair, Smith and Sogge proved Strichartz estimates 
with $s=4/3p$  both
for a compact Riemannian manifold with boundary and for a compact manifold $\mathcal M$
without boundary, endowed with a Lipschitz metric $g$ \cite{Blair}
(see also \cite{Blair2}, where these results have been recently improved).
\medskip

Then the spirit of Theorem \ref{strichartz} is that, if the initial datum $v_0$ 
is spectrally localized in a proper angular sector $\mathcal V$ in ${\mathbf{N}}^2$ defined as
in \eqref{cV},
then
we are able to prove a Strichartz estimate
like 
\eqref{Strichartzgenerale}, where the 
$H^s$ norm of the initial datum at the right-hand side is
replaced by 
the standard  non-isotropic norm $W^{s/p}$,
 for any index $s$ such that  $s> 2 [1-1/(n+1)] $ if $n>1$ or $s\ge
 4/3$ if $n=1$. Thus  
there is  a gain of $2 /(n+1)p$ 
derivatives ($2/3p$ in the one dimensional case) in comparison  to the
bounds indicated by non-isotropic Sobolev embeddings. 
If the initial datum $v_0$ 
is spectrally localized in 
the complementary region, that is, for instance, 
if
$v_0= \sum_{0\le \ell'\le \ell/M}h_{\ell,\ell'}$, $h_{\ell,\ell'}\in\mathcal{H}^{\ell,\ell'}$, then
our techniques only lead to  an estimate
like
\eqref{Strichartzgenerale}, with
$H^s$ norm of the initial datum at the right-hand side replaced by
the standard  non-isotropic norm ${W^{2/p}}$, thus providing no
improvement with respect to 
the Sobolev embedding. 
\medskip

The problem of optimality 
for Strichartz estimates is in general open, also in the Riemannian set-up.
As it is well known, the Strichartz estimate proved in  \cite{Burq2}
is not sharp, unless in the case $p=2$, 
in the class of compact Riemannian manifolds, since
J. Bourgain proved that for the flat torus $({\mathbf{R}}/2\pi{\mathbf{Z}})^2$
the Strichartz estimate holds for $p=q=2$
with  loss of $\varepsilon$ derivatives, for every $\varepsilon>0$ \cite{Bou1,Bou2}.
Moreover, 
Burq, G\'erard and Tzvetkov were able to improve their intermediate 
Strichartz estimates 
in some specific geometries, like spheres and
Zoll surfaces \cite{Burq3, Burq4}.
In our framework, 
some sharp bounds for the eigenfunctions of the sublaplacian on
$S^{2n+1}$, recently proved by the first author \cite{Casarino1,Casarino2}, 
do not suffice to prove the optimality 
in \eqref{Strichartz}.

It is worth noticing that different approaches, which have been
succesfully used in the Riemannian context 
(we refer in particular to \cite{Burq3,Burq2}), are possible and could
be used to prove optimal bounds, at least for intermediate $(p,q)$;  
in particular, 
it would be interesting to 
prove 
multilinear  estimates  for spectral projections 
associated to $\mathcal L$ on the complex sphere,
as well as  to prove intermediate Strichartz estimates by following
 the Fourier analytic approach by Bourgain.

We would also like to point out  that the compact manifold  $S^{2n+1}$, beyond
the pioneering works of G. Folland and D. Geller \cite{Follanddbar,Geller},
has recently attracted a lot of interest in connection with its CR
structure; 
we refer, 
in particular, to the recent papers
\cite{Branson,BW,Cowling-and-others}
  and to \cite{Casarino1,Casarino2,CaPe}. 
\medskip

The  paper is organized as follows.
In Section \ref{prelim}
we start recalling the basic facts about harmonic analysis on the
complex sphere. Then we recall the definition of the standard
isotropic and non-isotropic  
Sobolev spaces on the sphere
and introduce the mixed Sobolev spaces.
In Sections
\ref{dispersiveprima}
and
\ref{Steps3-4} we prove
the basic
dispersive estimates for solutions of
\eqref{schreq}
localized at high frequencies.
Our proof hinges on a repeated use of the Poisson summation formula,
specifically adapted in the key Lemma \ref{Poisson}
to our case. 
Following a classical approach, we then deduce in Section \ref{dim_teo}
  the Strichartz
 estimate
\eqref{Strichartz}
from the dispersive bounds.
 Optimality  will be discussed in Section \ref{finalremarks}, where we
 also make a comment on  
other possible admissibility conditions.

We shall use the symbol 
 $C$ to denote constants which may vary from one formula to the 
next, and $\lfloor x\rfloor$ to denote the greatest integer
at most $x$.
The symbol $\approx$ between two positive expressions means that their
ratio is bounded above and below. 
\medskip

%%%%%%%%%%%%%%%%%%%%%%%%%%
\section{Preliminary Facts and Notation}\label{prelim}
In this  section we recall some basic facts about
spherical harmonics and their relation to the the  analysis on the
complex sphere. 
 
For $n\ge 1$, we denote by
${\mathbf{C}}^{n+1}$  the $(n+1)$-dimensional complex space
equipped with the scalar product
$\langle z,w\rangle:=z_1 \bar w_1 +\cdots+z_{n+1}\bar w_{n+1}$,  $z,w\in{\mathbf{C}}^{n+1}$, and
by
 $S^{2n+1}$   the unit sphere in
${\mathbf{C}}^{n+1}$ 
$$ 
S^{2n+1} 
=\bigl\{z=(z_1,\dots,z_{n+1})\in{\mathbf{C}}^{n+1}:\, \langle z,z\rangle=1 \bigr\}\,.
$$

The sphere $S^{2n+1}$ is a  strongly pseudoconvex CR manifold
and thus 
endowed with subriemannian structure.
The Carnot--Carath\'eodory distance associated with the operator $\mathcal L$ is
equivalent to the so-called {\em Kor\'anyi distance} $d$
\begin{equation}\label{distanza}
d(z,w):= \left| 1-\langle z,w\rangle \right|^{1/2}\,,
\end{equation}
 $z,w\in S^{2n+1}$,
see \cite{Nagel}.

The
{\it homogeneous dimension}
$Q$ of $S^{2n+1}$, that will play a relevant r\^ole in our analysis,
is given by 
$Q:=2n+2$, since
 it is well known  that 
 $\operatorname{Vol} (B(z,r))\sim r^{Q}$, where $B (z,r)$
denotes the  ball centered at $z\in S^{2n+1}$ with radius $r>0$.

\subsection{\it{Spherical harmonics and spectral projections}} 
Consider the space $L^2(S^{2n+1})$,
equipped with the inner product
$$
(f,g):= \int_{S^{2n+1}}
f(z) \overline{g(z)} \, d\sigma (z)\, ,
$$
where $d\sigma$ is the Lebesgue surface measure, which is invariant
under the action of the unitary group $U(n+1)$.

For non-negative integers $\ell,\ell'$,
${\mathcal{H}}^{\ell,\ell'}$ is the  vector space of the restrictions to
$S^{2n+1}$  of harmonic polynomials
$p(z,\bar{z})$, homogeneous of degree $\ell$ in
$z$ and of  degree $\ell'$ in $\bar z$.
A  function in ${\mathcal{H}}^{\ell,\ell'}$ is called
a  {\it{complex spherical harmonic of bidegree}} $(\ell,\ell')$.

When $\ell'=0$, 
the space  ${\mathcal{H}}^{\ell,0}$ consists of holomorphic polynomials, and
${\mathcal{H}}^{0,\ell}$ consists of polynomials whose complex conjugates are
holomorphic.

The subspaces ${\mathcal{H}}^{\ell,\ell'}$ have finite dimension $d_{\ell,\ell'}$ given by 
\begin{equation}\label{dimensione}
d_{\ell,\ell'}:=n \frac{\ell+\ell'+n}{\ell\ell'}
\begin{pmatrix} \ell+n-1\\
	 \ell-1\end{pmatrix}
\begin{pmatrix} \ell'+n-1\\
	 \ell'-1\end{pmatrix}
\end{equation}
if $\ell,\ell'\ge 1$, and by
$$
d_{\ell,0}=d_{0,\ell}:=\begin{pmatrix} \ell+n\\
  \ell\end{pmatrix}\, ,
$$
if $\ell$ or $\ell'$ equals $0$.  

Moreover, 
the subspaces ${\mathcal{H}}^{\ell,\ell'}$ 
are $U(n+1)$-invariant,
pairwise orthogonal and their sum is dense in $L^2(S^{2n+1})$; 
more explicitly, 
if we denote
by the symbol $\pi_{\ell,\ell'}$ the orthogonal projector mapping $L^2(S^{2n+1})$
 onto ${\mathcal{H}}^{\ell,\ell'}$, then 
each function $f\in L^2 (S^{2n+1})$ may be decomposed in a unique way as 
\begin{equation}\label{decomposition}
f= 
\sum_{\ell,\ell'=0}^{+\infty}  \pi_{\ell,\ell'}f\,,
\end{equation}
where the series converges unconditionally to $f$ in the $L^2$-topology.

A special r\^ole in ${\mathcal{H}}^{\ell,\ell'}$ is played  by the so-called 
{\em zonal}
functions.
Let $\big\{Y_k^{\ell,\ell'}\big\}$, $k=1,\dots,d_{\ell,\ell'}$,
be an orthonormal basis for ${\mathcal{H}}^{\ell,\ell'}$.  
For $(z,w) \in S^{2n+1}\times S^{2n+1}$ 
set
$$
Z_{\ell,\ell'} (z,w) :=\sum_{k=1}^{d_{\ell,\ell'}}
Y_k^{\ell,\ell'} (z)
\overline{Y_k^{\ell,\ell'}(w)}
\, .
$$

Then, for all $f\in{\mathcal{H}}^{\ell,\ell'}$ we have
\begin{equation}\label{pairing}
f(z)=\int_{S^{2n+1}} f(w) Z_{\ell,\ell'}(z,w)\, d\sigma(w)\, .
\end{equation}
Since ${\mathcal{H}}^{\ell,\ell'}$ is finite dimensional, the above pairing makes sense for
all $f\in L^2(S^{2n+1})$.

For each fixed point $w\in S^{2n+1}$, the function
$f(w)=Z_{\ell,\ell'}(\cdot,w)$ is in ${\mathcal{H}}^{\ell,\ell'}$ and it is constant on the orbits of the
stabilizer of $w$ in $U(n+1)$, which is isomorphic to $U(n)$.  In
other words $Z_{\ell,\ell'} (z,w)$ depends only on $\langle z,w\rangle$, and we
write
\begin{equation}\label{cos-eitheta}
\langle z,w\rangle =   e^{i\omega} \cos \theta\, ,\qquad \theta\in[0,\pi/2]\, , \
\omega\in[0,2\pi)\, .
\end{equation}
With an abuse of notation, we will also denote by $Z_{\ell,\ell'}$ the function 
depending on the $1$-dimensional complex variable $\langle z,w\rangle$, that
is,
$$
Z_{\ell,\ell'}\big(\langle z,w\rangle\big)=Z_{\ell,\ell'}(z,w)\, .
$$ 
 
An explicit formula for the zonal function $Z_{\ell,\ell'}\in{\mathcal{H}}^{\ell,\ell'}$, for
$\ell'\ge\ell\ge 1$, is given by
\begin{equation}\label{zonali}
Z_{\ell,\ell'} (e^{i\omega}\cos\theta )= 
 \frac{d_{\ell,\ell'}}{\omega_{2n+1}}
\frac{\ell! (n-1)!}{(\ell+n-1)!}
e^{i\omega (\ell'-\ell)} (\cos\theta)^{\ell'-\ell}  P_{\ell}^{(n-1, \ell'-\ell)}
 (\cos 2\theta) \,,
\end{equation}
where $\omega_{2n+1}$ denotes the surface area of
$S^{2n+1}$
and 
$P_{\ell'}^{(n-1,\ell-\ell')}$ is the Jacobi polynomial, see
\cite{Szego}.

For the case $\ell'<\ell$, it suffices to recall that $Z_{\ell,\ell'}(z,w)
=\overline{Z_{\ell',\ell}(w,z)}$.\medskip

Since $P_0^{(n-1,\ell)}\equiv1$, if $\ell'=0$ 
the zonal function is given by
$$
Z_{\ell,0} (z,w)=\frac{1}{\omega_{2n+1}}
\binom{\ell+n}{\ell}  \overline{\langle z,w\rangle }^\ell \, .
$$

The following bound for the zonal functions is well known,
and appears 
in \cite{FollandPoisson}.
For any $z,w\in S^{2n+1}$ 
we have
\begin{equation}\label{2.7}
|Z_{\ell,\ell'}(z,w)|\le
\frac{d_{\ell,\ell'}}{\omega_{2n+1}}\, .
\end{equation}

Finally, it is easy to check that 
the orthogonal projector $\pi_{\ell,\ell'}$  
may be written as 
$$
\pi_{\ell,\ell'}f(z) = \int_{S^{2n+1}} f(w) Z_{\ell,\ell'}(z,w)\,
d\sigma(w) \, .
$$

\medskip

\subsection{\it{Classical and non-isotropic Sobolev spaces.}} 
Recall the decomposition \eqref{dec-L2} of $L^2(S^{2n+1})$.
Each subspace 
${\mathcal{H}}^{\ell,\ell'}$ is an eigenspace both for Laplace--Beltrami operator $\Delta$ with 
eigenvalue $\mu_{\ell,\ell'}:=(\ell+\ell')(\ell+\ell'+2n)$,
and for $\mathcal L$ with eigenvalue
$\lambda_{\ell,\ell'}:=2\ell\ell'+n (\ell+\ell')$.
For these and other properties of $\mathcal L$ we refer the reader to
\cite{Geller} and \cite{RicciUnterberger}.
\medskip

The non-isotropic
Sobolev spaces on the complex sphere can be defined in terms of
suitable powers of $I+\mathcal L$, or,
equivalently, 
in   terms of suitable 
  powers of the conformal sublaplacian ${\mathcal{D}}:=\mathcal L+\frac{n^2}{2}$; see,
  for instance, 
\cite{Folland}.
More precisely, for $1\le p\le\infty$
we set
\begin{equation}\label{defSobolev}
W^{r,p}(S^{2n+1}):=\big\{
f\in L^p(S^{2n+1})\,:
(I+\mathcal L)^{r/2}f\in L^p
\big\}\,.
\end{equation}
The operator 
$(I+\mathcal L)^{r/2}$ can be defined locally 
transferring the analogous operator from the Heisenberg group
via the Cayley transform,
see
\cite[\S~ 3]{Folland}, and also \cite{CaPe2}.
 
We will mostly deal with the case of  $L^2$-integrability, and we simply
 write $W^r$ for $W^{r,2}$.  For functions in $W^r$ we have the
 identity
$$
(I+\mathcal L)^{r/2} f
= \sum_{\ell,\ell'=0}^{+\infty} (1+\lambda_{\ell,\ell'})^{r/2}
\pi_{\ell,\ell'}f\, 
$$

Then, $W^r$ is a Hilbert space under the inner product 
$$
(f,g)_{W^r}:=\int_{S^{2n+1}}(I+\mathcal L)^{r/2}f\,
\overline{(I+\mathcal L)^{r/2}g}
\,.$$

For $s\ge0$, we
shall denote by
$H^s (S^{2n+1})$ the classical Sobolev space on $S^{2n+1}$, 
defined as  in \eqref{defSobolev},
with the operator $I+\mathcal L$ replaced by the  operator
$I+\Delta$. In particular, $H^s$ is endowed with the norm
$$
\|f\|_{H^s} = \Big(
\sum_{\ell,\ell'=0}^{\infty}
\big(1+
\mu_{{\ell,\ell'}}\big)^s
\,
\| \pi_{\ell,\ell'}f\|_{L^2}^2 \Big)^{1/2}\, .
$$
The following inclusions
follow
\begin{equation*}\label{inclusioniSob}
H^s\subseteq W^s\subseteq H^{s/2}\,.
\end{equation*}
For both isotropic and non-isotropic   Sobolev immersion theorems 
in a $\operatorname{CR}$ 
 setting  we refer to the seminal papers
\cite{Folland} and  \cite{FollandStein}, where results are proved 
in the framework
of Heisenberg groups.
Anyway, it is not difficult to check that the same inclusions 
hold on complex spheres. 

%SPAZI di SOBOLEV MISTI
\subsection{\it{Mixed Sobolev spaces.}} 
We now introduce a family of Sobolev-type spaces 
that measure the
regularity of functions differently according to their spectral
localization. 

Fix 
a constant $M>1$
 and define
the proper cone $\mathcal V=\mathcal V_M$  in ${\mathbf{N}}^2$ 
\begin{equation}\label{def-cono}
\mathcal V:=\{(\ell, \ell'):\, 
 \ell/M < \ell'< M\ell\}
\end{equation}
and the pair of edges  $\mathcal E=\mathcal E_M$
\begin{equation}\label{def-spigolo}
\mathcal E:=\{(\ell, \ell'):\, \ell'\le \ell/M
\text{ or }
\ell'\ge M \ell
\}\,.
\end{equation}
We define the corresponding spectral
projections 
\begin{equation*}\label{spe-pro}
\pi_\mathcal V  = \sum_{\ell/M < \ell' < M\ell  }\pi_{\ell,\ell'} 
\quad\text{and}\quad
\pi_\mathcal E  = \sum_{ \ell'\le \ell/M\, \text{ or }\,  \ell'\ge M \ell } \pi_{\ell,\ell'} 
\, .
\end{equation*}
 We then  introduce the corresponding spaces
of spectrally localized functions 
\begin{equation*}
L^2_\mathcal V(S^{2n+1}) =\big\{ u\in L^2 (S^{2n+1} )  \,:\, 
u= \pi_\mathcal V u \big\}
\end{equation*}
and 
\begin{equation*}
L^2_\mathcal E(S^{2n+1}) =\big\{ u\in L^2 (S^{2n+1} )  \,:\, 
u=\pi_\mathcal E u 
\big\}\,.
\end{equation*}

We define the mixed Sobolev spaces $\mathcal X^{(r,s)}=\mathcal X^{(r,s)}_M(S^{2n+1})$
as
\begin{equation}\label{mix-Sob}
\mathcal X^{(r,s)}= \big\{ u\in L^2 (S^{2n+1}):\,  \pi_\mathcal V u\in W^r(S^{2n+1} )\ \text{and}\
\pi_\mathcal E u \in W^s(S^{2n+1} ) \big\}
\, ,
\end{equation}
with norm given by
$$
\|u\|_{\mathcal X^{(r,s)}} 
=\Big( \sum_{(\ell,\ell')\in\mathcal V} (1+\lambda_{\ell,\ell'})^r 
\| \pi_{\ell,\ell'} u\|_{L^2}^2 +
\sum_{(\ell,\ell')\in\mathcal E} (1+\lambda_{\ell,\ell'})^s 
\| \pi_{\ell,\ell'} u\|_{L^2}^2 
\Big)^{1/2} 
$$
Notice that the norm depends on $M$ although it will not be
explicitely indicated.

In general,
given a function space $\mathcal Y\subseteq L^2 (S^{2n+1})$,
we denote by $\mathcal Y_\mathcal V$ and $\mathcal Y_\mathcal E$ respectively, the subspaces
of $\mathcal Y$ of the functions that are spectrally localized
in $\mathcal V$ and $\mathcal E$, respectively.  Then, we have
$$
\mathcal X^{(r,s)} = W^r_\mathcal V \cap W^s_\mathcal E\, .
$$

For the mixed Sobolev spaces $\mathcal X^{(r,s)}$ 
we have 
the following elementary result that gives
embedding in the Lebesgue spaces and comparison with the classical non-isotropic
Sobolev spaces.

\begin{proposition}\label{immersioni}
Let $M>1$ be fixed.  Given $r,s\ge0$ %, there exists $C>0$ such that
the following properties hold true.
\begin{itemize}%{enumerate}
\item[(1)] If $\min(r,s)>Q(\frac12-\frac1q)$ then for all $u\in
  {\mathcal{C}}^\infty(S^{2n+1})$ we have
$$
\|u\|_{L^q}\leq C \|u\|_{\mathcal X^{(r,s)}}\, .
$$
\item[(2)]  If 
$u\in {\mathcal{C}}^\infty_\mathcal V$, then 
\begin{equation*}%\label{equiv_cono} 
\|u\|_{\mathcal X^{(r,s)}}\approx \|u\|_{W^r}\approx \|u\|_{H^r}\, ,
\end{equation*}
and, for $\min(r,s)>(2n+1)(\frac12-\frac1q)$
\begin{equation*}%\label{refined_inclusion}
\|u\|_{L^q}  \le C\|u\|_{\mathcal X^{(r,s)}}
\, .
\end{equation*}
\item[(3)]
For all $u\in {\mathcal{C}}^\infty$ such that $\pi_{\ell,\ell'} u=0$ for
$\min(\ell,\ell')> M$ we have
\begin{equation*}%\label{equiv_spigolo}
\|u\|_{\mathcal X^{(r,s)}} = \|u\|_{W^r}\approx \|u\|_{H^{r/2}}\,.
\end{equation*}
\end{itemize}
The constants involved 
in the above estimates depend on $M$.
\end{proposition}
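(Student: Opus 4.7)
The plan is to treat the three statements separately, using throughout the comparison between the two eigenvalue scales $\lambda_{\ell,\ell'}$ and $\mu_{\ell,\ell'}$: on $\mathcal V$ one has $\mu_{\ell,\ell'}\approx\lambda_{\ell,\ell'}$ (as already noted just after \eqref{cV}), while on the ``far'' part of $\mathcal E$ one has rather $\lambda_{\ell,\ell'}\approx\mu_{\ell,\ell'}^{1/2}$, because one of the two indices is forced to be bounded.

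For part (1), I would start from the non-isotropic Sobolev embedding $W^\rho(S^{2n+1})\hookrightarrow L^q(S^{2n+1})$, valid for $\rho>Q(\tfrac12-\tfrac1q)$, obtained from the Heisenberg-group version of \cite{FollandStein} via the Cayley transform (see also \cite{Folland,CaPe2}). Decomposing $u=\pi_\mathcal V u+\pi_\mathcal E u$, and applying the embedding to the first piece with $\rho=r$ and to the second piece with $\rho=s$, the triangle inequality yields
\[
\|u\|_{L^q}\le\|\pi_\mathcal V u\|_{L^q}+\|\pi_\mathcal E u\|_{L^q}\le C\bigl(\|\pi_\mathcal V u\|_{W^r}+\|\pi_\mathcal E u\|_{W^s}\bigr)\le C\|u\|_{\mathcal X^{(r,s)}},
\]
under the hypothesis $\min(r,s)>Q(\tfrac12-\tfrac1q)$.

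For part (2), the assumption $u\in\mathcal C^\infty_\mathcal V$ gives $\pi_\mathcal E u=0$, so the $\mathcal E$-sum in the definition of $\|u\|_{\mathcal X^{(r,s)}}$ drops out and $\|u\|_{\mathcal X^{(r,s)}}=\|u\|_{W^r}$ is immediate. The equivalence $\|u\|_{W^r}\approx\|u\|_{H^r}$ then follows termwise, since $(1+\lambda_{\ell,\ell'})^r\approx(1+\mu_{\ell,\ell'})^r$ on $\mathcal V$ with constants depending only on $M$. The $L^q$-bound is then obtained from the classical \emph{isotropic} Sobolev embedding $H^r(S^{2n+1})\hookrightarrow L^q(S^{2n+1})$, which is valid for $r>(2n+1)(\tfrac12-\tfrac1q)$ since $S^{2n+1}$ has topological dimension $2n+1$.

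For part (3), the support condition forces $\min(\ell,\ell')\le M$, and I would analyze $\pi_\mathcal V u$ and $\pi_\mathcal E u$ separately. On $\mathcal V$ the defining inequality $\ell/M<\ell'<M\ell$ combined with $\min(\ell,\ell')\le M$ forces $\max(\ell,\ell')\le M^2$, so $\pi_\mathcal V u$ lies in a fixed finite-dimensional subspace on which all Sobolev norms are equivalent to $\|\cdot\|_{L^2}$ with $M$-dependent constants. On the branch $\ell'\le\ell/M$ of $\mathcal E$ (where necessarily $\ell'\le M$), one computes $\lambda_{\ell,\ell'}=2\ell\ell'+n(\ell+\ell')\approx\ell=\max(\ell,\ell')$ and $\mu_{\ell,\ell'}=(\ell+\ell')(\ell+\ell'+2n)\approx\ell^2=\max(\ell,\ell')^2$, so that $(1+\lambda_{\ell,\ell'})^r\approx(1+\mu_{\ell,\ell'})^{r/2}$; the branch $\ell\le\ell'/M$ is symmetric. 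Summing and absorbing the finite-dimensional $\mathcal V$-contribution gives the chain $\|u\|_{\mathcal X^{(r,s)}}\approx\|u\|_{W^r}\approx\|u\|_{H^{r/2}}$. The only non-routine input in the whole argument is the non-isotropic Sobolev embedding used in (1); once that is granted, everything else reduces to the elementary asymptotic comparison of $\lambda_{\ell,\ell'}$ and $\mu_{\ell,\ell'}$ in each of the regions $\mathcal V$ and $\mathcal E$.
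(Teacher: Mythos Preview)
Your argument follows essentially the same route as the paper's: the non-isotropic embedding $W^\rho\hookrightarrow L^q$ for (1), the comparison $\lambda_{\ell,\ell'}\approx\mu_{\ell,\ell'}$ on $\mathcal V$ followed by the classical isotropic embedding for (2), and the comparison $(1+\lambda_{\ell,\ell'})\approx(1+\mu_{\ell,\ell'})^{1/2}$ when $\min(\ell,\ell')\le M$ for (3). Your write-up is in fact more detailed than the paper's, which dispatches (3) in a single line.

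One point deserves flagging in (3). Your computation correctly gives $\|u\|_{W^r}\approx\|u\|_{H^{r/2}}$ for such $u$, but the first link $\|u\|_{\mathcal X^{(r,s)}}\approx\|u\|_{W^r}$ does not follow from what you wrote: on the $\mathcal E$-part the $\mathcal X^{(r,s)}$-norm carries the exponent $s$, not $r$, and $\lambda_{\ell,\ell'}$ is unbounded there (take e.g.\ $\ell'=0$, $\ell\to\infty$), so $(1+\lambda_{\ell,\ell'})^s$ and $(1+\lambda_{\ell,\ell'})^r$ are not comparable when $r\neq s$. What your argument actually delivers is $\|u\|_{\mathcal X^{(r,s)}}\approx\|u\|_{W^s}\approx\|u\|_{H^{s/2}}$. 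This discrepancy appears to be a typo in the stated proposition rather than a defect in your reasoning; the paper's own one-line proof of (3) does not address the $\mathcal X^{(r,s)}$--$W^r$ identification either.
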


\begin{proof}
(1) 
It suffices to recall the embedding theorems 
for the non-isotropic Sobolev spaces 
$W^r$.  
 Thm. 5.15 in \cite{Folland}
entails that, if $u\in
{\mathcal{C}}^\infty_\mathcal V$, then
$\|u\|_{L^q} \le C \|u\|_{W^{r,2}}$
if $r>Q(\frac12-\frac1q)$.
The result now follows easily.

Next, for
$u\in  {\mathcal{C}}^\infty_\mathcal V$ we have
\begin{align*} 
\|u\|_{(r,s)}^2
& \approx 
\sum_{\ell/M< \ell'< M\ell}
(1+\lambda_{\ell,\ell'})^r
\|\pi_{\ell,\ell'} u\|_2^2\approx
\sum_{\ell/M< \ell'< M\ell}
(1+\ell)^{2r}
\|\pi_{\ell,\ell'} u\|_2^2
\\
& 
\approx
\sum_{\ell/M< \ell'< M\ell}
(1+\mu_{\ell,\ell'})^{r}
\|\pi_{\ell,\ell'} u\|_2^2\, .
\end{align*} 
The second statement in
(2)  now follows
 from 
 the classical embedding 
theorem for the Sobolev space $H^\sigma(S^{2n+1})$.

Finally, (3) follows at once since, for $\min(\ell,\ell')\le M$, 
$(1+\lambda_{\ell,\ell'})\approx
(1+\mu_{\ell,\ell'})^{1/2}$.
\end{proof}

%%%%%%%%%%%%%%%%%%%% 

\section{The dispersive estimate}\label{dispersiveprima}\medskip

In this section we study   the  dispersive properties for
solutions of the Schr\"odinger equation 
that are spectrally localized.

The solutions of \eqref{schreq} do not satisfy in general a
dispersive estimate,
either 
globally (as constant solutions show for large $t$)
or locally in time.
A similar lack of the 
dispersive effect was noticed by
Burq, G\'erard and
Tzvetkov 
in the Riemannian case on compact manifolds as well (see \cite{Burq2}).

Indeed, if we could prove 
a dispersive estimate of the form
$$\| e^{it\mathcal L}\|_{(L^1 (S^{2n+1}), L^{\infty}(S^{2n+1}))}
\le \frac{C}{|t|^{q_0}}$$
for some $q_0>0$ and for some $t>0$,
then the $L^{\infty}$ norm
of eigenfunctions of the sublaplacian
should be controlled by
the $L^1$ norm and this is not true
in general
(see Theorem 3.1 in 
\cite{Casarino2}, where bounds are proved for the $L^1$ norm of zonal functions).

However, it is possible to prove a family of 
dispersive estimates on small time intervals related to the
frequencies of the data, that will suffice for the proof of the Strichartz
estimates (see \cite{Bahouri}
and
\cite{Tataru} for a first application of this idea). \medskip

In this paper, we are able to prove different dispersive estimates for data
$v$ that are spectrally localized according to a {\em double} decomposition
of the spectrum.

We introduce a spectral cut-off.  Let  $\varphi$ be a non-negative
smooth function with support contained in the
interval $[a,b]$, with $0<a<b<\infty$.
and $h\in(0,1]$, we consider the operator
$$
\varphi (h^2\mathcal L)\,:\, L^2 (S^{2n+1})\to L^2 (S^{2n+1})\, ,
$$
defined by the functional calculus for the sublaplacian.  

Next, we fix a smooth cut-off function $\psi$ with compact support in
$[1/M,M]$, where  $M>1$ is a (large) constant.

\begin{theorem}\label{dispersive}
Let $\varphi,\psi$  be  smooth cut-off functions 
be defined as above. 
Let $p, p'$ be
such that $\frac{1}{p}+\frac{1}{p'}=1$, $p\in [1,2]$. Then the
following estimates hold.
\begin{itemize}
\item[(i)]
Let  $s_n$ is as in \eqref{def-sn} and let $s>s_n$ 
 if $n>1$, 
or
$s\ge s_n$ if $n=1$.
Then there  exist  $c, C_s>0$ 
such that for all $v_0\in{\mathcal{C}}^{\infty}(S^{2n+1})$,
for all $h\in (0,1]$
\begin{equation}\label{stimadisp-cono}
\Big\|
 \sum_{\ell, \ell' \ge 0 } 
 e^{it{\lambda}_{\ell,\ell'} }\varphi (h^2{\lambda}_{\ell,\ell'})\psi(\ell'/\ell)\pi_{{\ell,\ell'}}(v_0) 
\Big\|_{L^{p'}(S^{2n+1})}
\le
\frac{C_s }{|t|^{Q/2 (\frac{1}{p}-\frac{1}{p'})}}\|v_0\|_{L^{p}(S^{2n+1})}
\end{equation} 
for all
 $t\in I_s :=[-c h^s,c h^s]$.
\item[(ii)]
 Then there 
exists  $C>0$ 
such that for all $v_0\in{\mathcal{C}}^{\infty}(S^{2n+1})$,
for all $h\in (0,1]$
\begin{equation}\label{stimadisp-spigoli}
\Big\|
 \sum_{\ell, \ell' \ge 0 } 
 e^{it{\lambda}_{\ell,\ell'}}\varphi (h^2 {\lambda}_{\ell,\ell'})\big( 1-\psi(\ell'/\ell)\big)\pi_{{\ell,\ell'}}(v_0)
\Big\|
 _{L^{p'}(S^{2n+1})}\le
\frac{C_s }{|t|^{Q/2 (\frac{1}{p}-\frac{1}{p'})}}\|v_0\|_{L^{p}(S^{2n+1})}
\end{equation} 
for all
 $t\in I_2 :=[-h^2,h^2]$.
\end{itemize}
\end{theorem}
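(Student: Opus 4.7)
The plan is to reduce each estimate to an $L^\infty$ bound on the Schwartz kernel of the operator and then to analyze that kernel by combining the explicit zonal-function formulas with Poisson summation and Jacobi polynomial asymptotics. As a first step I would observe that $e^{it\mathcal L}$ is unitary on $L^2$ and that the spectral multipliers $\varphi(h^2\mathcal L)$ together with the angular cut-offs $\psi(\ell'/\ell)$, $1-\psi(\ell'/\ell)$ are uniformly bounded on $L^2$; hence the operators appearing in (i) and (ii) have $L^2\to L^2$ norm $\lesssim 1$. By Riesz--Thorin interpolation, each of \eqref{stimadisp-cono} and \eqref{stimadisp-spigoli} reduces to the dual endpoint bound on the integral kernels
$$K_t^{\mathcal V}(z,w)=\sum_{\ell,\ell'\ge0} e^{it\lambda_{\ell,\ell'}}\varphi(h^2\lambda_{\ell,\ell'})\psi(\ell'/\ell)\,Z_{\ell,\ell'}(z,w),$$
and the analogous $K_t^{\mathcal E}$, namely $\|K_t^\bullet\|_{L^\infty(S^{2n+1}\times S^{2n+1})}\le C|t|^{-Q/2}$. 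Since $Z_{\ell,\ell'}(z,w)$ depends only on $\langle z,w\rangle=e^{i\omega}\cos\theta$, each kernel is really a function of $(\omega,\theta)\in[0,2\pi)\times[0,\pi/2]$.

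Next I would insert the explicit formula \eqref{zonali} and perform the change of indices $k=\ell+\ell'$, $m=\ell'-\ell$, which factorises the phase as
$$t\lambda_{\ell,\ell'}=\tfrac{t}{2}(k^2-m^2)+tnk,$$
and turns the angular factor $e^{i(\ell'-\ell)\omega}$ into $e^{im\omega}$, separating the two variables. This makes the sum amenable to a two-variable Poisson summation in $(k,m)\in\mathbf Z^2$ (subject to the parity constraint $k\equiv m\pmod 2$); the required version of the formula is encapsulated in Lemma \ref{Poisson}. In part (i) the joint support of $\varphi(h^2\lambda_{\ell,\ell'})$ and $\psi(\ell'/\ell)$ forces $k,\ell,\ell'\sim h^{-1}$ with $\ell/\ell'$ bounded, so the Jacobi polynomial $P_\ell^{(n-1,m)}(\cos 2\theta)$ enters the regime where both the degree and the shift parameter are comparable; the relevant Darboux/Hilb-type asymptotic represents it as an oscillatory factor $e^{i\Psi(k,m,\theta)}$ with slowly-varying amplitude. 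Poisson summation then converts $K_t^{\mathcal V}$ into a discrete sum indexed by dual frequencies $(\nu_1,\nu_2)\in\mathbf Z^2$ of oscillatory integrals whose phase combines $\tfrac{t}{2}(k^2-m^2)+tnk$, the linear term $m\omega+\Psi(k,m,\theta)$, and the shifts $2\pi(\nu_1 k+\nu_2 m)$. The main term $(\nu_1,\nu_2)=(0,0)$ is handled by stationary phase in both variables and produces the sharp $|t|^{-(n+1)}=|t|^{-Q/2}$ decay, while the non-zero $(\nu_1,\nu_2)$ contributions are lower-order by repeated non-stationary integration by parts, provided $|t|$ is small enough.

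The central obstacle is the careful accounting of powers of $h^{-1}$ arising from the amplitude of the Jacobi asymptotic and its error terms, balanced against the $|t|^{-(n+1)}$ gain produced by stationary phase; the boundary case in which a stationary point approaches the edge of $\mathcal V$ (i.e.\ $\ell'/\ell\to M^{\pm1}$) must also be treated delicately, using the support of $\psi$ to cut it away. Together these two issues dictate exactly the admissible threshold $s>s_n=2(1-1/(n+1))$ in \eqref{def-sn}; the exceptional case $n=1$ is more restrictive because the Jacobi expansion loses one order of precision, forcing the weaker (but attained) value $s_1=4/3$.

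For part (ii) the analysis is considerably simpler. In the edge region $\mathcal E$ one may by symmetry assume $\ell'\le\ell/M$, so that $\lambda_{\ell,\ell'}\approx n\ell$ while the cut-off $\varphi(h^2\lambda_{\ell,\ell'})$ forces $\ell\sim h^{-2}$. The Jacobi polynomial $P_\ell^{(n-1,\ell'-\ell)}$ then lies in the regime governed by the classical Mehler--Hilb expansion familiar from the sphere in the Riemannian setting, and a single-variable Poisson summation in $\ell$ followed by stationary phase on integrals with principal phase $t\ell(\ell+n)$ plus the Jacobi oscillation produces the required $|t|^{-Q/2}$ decay on the Riemannian-type window $|t|\le h^2$; the argument here closely mirrors the Burq--G\'erard--Tzvetkov treatment of the Laplace--Beltrami operator on compact Riemannian manifolds \cite{Burq2}.
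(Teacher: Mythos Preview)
Your reduction to the endpoint kernel bound via the trivial $L^2\to L^2$ estimate and Riesz--Thorin is correct and is exactly how the paper begins. After that, however, your plan diverges from the paper in two significant ways.

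\textbf{Part (ii) is trivial in the paper.} The proof of \eqref{stimadisp-spigoli} uses no oscillation whatsoever: one simply bounds $|Z_{\ell,\ell'}|\le d_{\ell,\ell'}/\omega_{2n+1}$, sums over the spectral window using the lattice-point count of Lemma~\ref{sommeiperboli}, and obtains $\sum d_{\ell,\ell'}\le Ch^{-(2n+2)}$; since $|t|\le h^2$ this is already $\le C|t|^{-(n+1)}$. The paper explicitly calls this ``the trivial part of the estimate,'' giving ``no improvement with respect to the Sobolev embedding.'' Your proposed Poisson-plus-stationary-phase argument for the edge is unnecessary, and your description of the regime (``$\lambda_{\ell,\ell'}\approx n\ell$ forces $\ell\sim h^{-2}$'') applies only at the extreme edge $\ell'=0$, not throughout $\mathcal E$.

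\textbf{Part (i): a different route, with a gap.} The paper does \emph{not} pass to $(k,m)=(\ell+\ell',\ell'-\ell)$ and never uses two-variable stationary phase. Instead it keeps the variables $(\ell,\ell')$ (shifted by $n/2$), expands the Jacobi polynomial via the Fitouhi--Hamza Bessel representation (Lemma~\ref{stimeFitohui}), splits according to $N\theta\lessgtr1$, and applies a one-dimensional Poisson-type bound (Lemma~\ref{Poisson}) to the inner sum in either $\ell$ or $\ell'$, the choice being dictated by a phase dichotomy (Lemmas~\ref{phase-fnc} and \ref{phase-fnc-2}): for every $(t,\omega,\theta)$ in the relevant range, at least one of the linear phase velocities $(tk+\omega\pm\theta)/2\pi$ or $(tk'-\omega\pm\theta)/2\pi$ stays a distance $\gtrsim tk$ from $\mathbf Z$, which makes that inner sum $O(1)$. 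No continuous stationary-phase integral appears.

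Your sketch has a real gap at the sentence ``stationary phase in both variables \dots\ produces the sharp $|t|^{-(n+1)}$ decay.'' The Hessian of your quadratic phase $\tfrac{t}{2}(k^2-m^2)$ is $\operatorname{diag}(t,-t)$, so two-dimensional stationary phase yields only a factor $|t|^{-1}$; the remaining $n$ powers of $|t|^{-1}$ must come from the amplitude (size of $d_{\ell,\ell'}$, Jacobi decay, location of the critical point, localisation scale $h^{-1}$), and balancing all of these to land exactly on the threshold $s_n$ is precisely the delicate accounting you acknowledge but do not carry out. In the paper this accounting is what forces the splitting $K^\pm_1$ versus $K^\pm_2$ and, within the latter, the further separation of the region $\theta\approx tN$ in $\Upsilon_{j,2}$; it is in Proposition~\ref{estimate-Sigma-pm} that the values $s_n=2(1-1/(n+1))$ for $n>1$ and $s_1=4/3$ actually emerge.
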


\begin{remark}\label{timeinterval}
We point out that the index $s$ which determines the length of the
time interval $I_s$  in \eqref{stimadisp-cono} and \eqref{stimadisp-spigoli}, is subject to 
the following upper bound.
For, the kernel of the operator
$e^{it\mathcal L} \varphi (h^2 \mathcal L)$
is given by
$$
K_h (t, z,w)
=
\sum_ {\ell,\ell'=0}^{\infty}
e^{it{\lambda}_{\ell,\ell'}} \varphi (h^2 {\lambda}_{\ell,\ell'})
Z_{\ell,\ell'} (z,w)\, .
$$
Reasoning as in \cite{Burq2}, we have
\begin{align*}
\| e^{it \mathcal L}\varphi (h^2 \mathcal L)\|_{(L^1,L^\infty)}
&= \|K_h (t,\cdot,\cdot)\|_{L^{\infty}(S^{2n+1}\times S^{2n+1})}\ge
C\|K_h (t,\cdot,\cdot)\|_{L^{2}(S^{2n+1}\times S^{2n+1})}\\
&\ge C \Big( \sum_ {\ell,\ell'=0}^{\infty}
|\varphi (h^2 {\lambda}_{\ell,\ell'})|^2 d_{\ell,\ell'} \Big)^{1/2}
\ge  \frac{C}{h^{n-1}}
\Big( \sum_ {{\lambda}_{\ell,\ell'}\sim h^{-2}} (\ell+\ell') \Big)^{1/2}\\
&\ge \frac{C}{h^{n-1}}
\Big( \sum_ {\ell\sim h^{-2}} \ell \Big)^{1/2}
= \frac{C}{h^{n+1}}
\end{align*}
where in particular we used
\eqref{dimensione}.
Then estimates like
\eqref{stimadisp-cono}
or
\eqref{stimadisp-spigoli}
for $p=1$
imply
$|t|\le c h$ for some $c>0$, that is, $s\ge
1$.
\medskip
\end{remark}

\subsection*{Proof of Theorem \ref{dispersive}}
For all $t\in {\mathbf{R}}$ we have
$$
\big\|
 e^{it\mathcal L}\varphi (h^2 \mathcal L)v_0\big\|_{L^{2} }\le
 {C}{\|v_0\|_{L^2}}\,.
 $$
Thus, 
as a consequence of 
the Riesz-Thorin Theorem
 and
Young's inequality, 
it suffices to prove the following estimates
\begin{equation}\label{normaLinfinitogenerale-cono}
\Big\|
 \sum_{\ell, \ell'\ge 0} 
 e^{it{\lambda}_{\ell,\ell'}}\varphi (h^2 {\lambda}_{\ell,\ell'}) \psi(\ell'/\ell) Z_{\ell,\ell'}
\Big\|_{L^{\infty}(S^{2n+1}\times S^{2n+1})}\le
 \frac{C}{|t|^{Q/2}}
 \end{equation} 
for all
$|t|\le h^s$, where $s>s_n:=2[1-1/(n+1)]$ and
\begin{equation}\label{normaLinfinitogenerale-spigoli}
\Big\|
 \sum_{\ell, \ell'\ge 0} 
 e^{it{\lambda}_{\ell,\ell'}}\varphi (h^2 {\lambda}_{\ell,\ell'}) \big(
 1-\psi(\ell'/\ell)\big) Z_{\ell,\ell'}
\Big\|_{L^{\infty}(S^{2n+1}\times S^{2n+1})}\le
 \frac{C}{|t|^{Q/2}}
 \end{equation} 
for all
$|t|\le h^2$.

\medskip

In order to prove Theorem
\ref{dispersive} we break the proof of \eqref{normaLinfinitogenerale-cono}
and \eqref{normaLinfinitogenerale-spigoli}
into a few steps, that now we summarize.
%\begin{itemize} \item[] 

{\bf{Step 1.}} We prove both estimates when (i) $h\ge \varepsilon_0$, where
  $\varepsilon_0>0$ is a fixed constant and (ii)
when $|t|\le h^2$.  This second case proves in fact that
  both \eqref{normaLinfinitogenerale-cono} and
  \eqref{normaLinfinitogenerale-spigoli} hold for these values of $t$ and
   in particular establishes \eqref{stimadisp-spigoli}
in Theorem  \ref{dispersive}.

%\smallskip\item[] 

{\bf{Step 2.}} Recalling 
\eqref{zonali} and \eqref{cos-eitheta}, we prove
\eqref{normaLinfinitogenerale-cono} 
 when $\langle z,w\rangle=e^{i\omega}\cos\theta$ varies in a
fixed compact set of the unit disk, that is, when $\theta\in
[\varepsilon_1,\pi/2]$, for some $\varepsilon_1>0$.
%\smallskip \item[] 

{\bf{Step 3.}} Next, we assume that $h^2\le |t|\le h^s$, and
  $0<h<\varepsilon_0$ and prepare to estimate
$$
\Big|  \sum_ {{\ell,\ell'=1}}^{+\infty} 
  e^{it{\lambda}_{\ell,\ell'}}\varphi (h^2{\lambda}_{\ell,\ell'} )\psi(\ell'/\ell)Z_{\ell,\ell'} (z,w)\Big|
$$
when $\langle z,w\rangle$ varies outside the compact set fixed in Step 2, that
is, when $\theta<\varepsilon_1$. 

First
we need to distinguish  between the diagonal case $\ell=\ell'$
and the sums  over  $\ell>\ell'$ and    $\ell<\ell'$.
To do this,  we  introduce
an even  cut-off function
  $\eta_0$,
  identically 1 for $|\xi|\le1/4$ and identically 0 for $|\xi|\ge1/2$,
  and the two cut-off functions
  $\eta_\pm(\xi) =\chi_{(0,+\infty)}\big[1-\eta_0(\pm\xi)\big]$, supported respectively on
  $\ell>\ell'$ and    $\ell<\ell'$.
Accordingly, we decompose the sum above as
$$
K^0 +K^+ +K^-\, 
$$
by writing
$1=\eta_0(\ell'-\ell)+\eta_+(\ell'-\ell)+\eta_-(\ell'-\ell)$.

The estimate for $K^0$ turns out to be trivial, while,
in order to estimate $K^\pm$,  we need another decomposition.
Clearly, it suffices to consider the case of $K^+$, which is supported
when $\ell'>\ell$.

We need to distinguish between the cases $\theta 
\le 1/\ell'$ (recall that $\ell'=\max\{\ell',\ell\}$) and 
$\theta>1/ \ell'$.
Thus we  introduce a
cut-off function $\chi_1$
supported in $[0,2]$, set $\chi_2=1-\chi_2$ and split $K^+$ as 
$K_1^+(\omega,\theta)+K_2^+(\omega,\theta)$, where, for $j=1,2$,
$$
K_j^+(\omega,\theta)
:= \sum_ {{\ell,\ell'=1}}^{+\infty} 
  e^{it{\lambda}_{\ell,\ell'}}\varphi (h^2{\lambda}_{\ell,\ell'} )\psi(\ell'/\ell) \eta_+(\ell'-\ell)
  \chi_j\big((\ell+\ell'+n)\theta\bigr) Z_{\ell,\ell'} (z,w)\,.
$$

{\bf{Step 4.}} 
Here we prove the estimate for $K_1^\pm (\omega,\theta)$.
 
{\bf{Step 5.}} Finally, we prove the estimate for $K_2^\pm (\omega,\theta)$
  and complete the proof of Theorem \ref{dispersive}.
\medskip

\noindent
{\bf{Step 1.}}
We begin proving that the estimate \eqref{normaLinfinitogenerale-cono} is
trivial in two cases: when $h\ge \varepsilon_0$ and 
in the low frequency case,
that is, when  $|t| \le h^2$.
In this case,
the oscillations of the exponential function are ineffective.

In \cite{CaPe}
the authors proved a restriction-type lemma for blocks of spectral
projections associated to the sublaplacian on $S^{2n+1}$. 
A key ingredient in the proof was the following estimate.
\begin{lemma}\label{sommeiperboli}
Let $1\le a<b$ be fixed.
Then there exists a constant $C>0$
depending only on $n$ such that 
\begin{equation}\label{sommaiperboleab}
\sum_{{{\lambda}_{\ell,\ell'}\in(a,b]}}
(\ell+\ell') \le
 C 
b 
\bigl(b-a +\log(b+1)\bigr)\, .
\end{equation}
\end{lemma}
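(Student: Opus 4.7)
My plan is to estimate the sum by fixing one index and counting lattice points in the interval determined by the other. Writing $\lambda_{\ell,\ell'}=\ell'(2\ell+n)+n\ell$, for each $\ell\ge 0$ the condition $\lambda_{\ell,\ell'}\in(a,b]$ becomes
\[
\ell'\in I_\ell:=\Bigl(\tfrac{a-n\ell}{2\ell+n},\ \tfrac{b-n\ell}{2\ell+n}\Bigr],
\]
an interval of length $(b-a)/(2\ell+n)$ whose upper endpoint is at most $b/(2\ell+n)$. By the symmetry $\ell\leftrightarrow\ell'$ of $\lambda_{\ell,\ell'}$ one has $\sum(\ell+\ell')=2\sum\ell'$, so it suffices to control
\[
\sum_{\ell\ge 0}\sum_{\ell'\in I_\ell\cap\mathbf N}\ell'\le\sum_{\ell\ge 0}N_\ell\cdot\frac{b}{2\ell+n},
\]
where $N_\ell:=\#(I_\ell\cap\mathbf N)\le(b-a)/(2\ell+n)+1$.

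The key step is to split the $\ell$-sum according to whether $I_\ell$ has length at least $1$. Setting $\ell_0:=\lfloor(b-a-n)/2\rfloor$, in the regime $\ell\le\ell_0$ one has $2\ell+n\le b-a$, hence $N_\ell\le 2(b-a)/(2\ell+n)$; the resulting term is bounded by $2b(b-a)/(2\ell+n)^2$, and summing over $\ell\ge 0$ yields $O(b(b-a))$ since $\sum_{\ell}(2\ell+n)^{-2}<\infty$. For $\ell>\ell_0$ only the crude bound $N_\ell\le 1$ is available, but the summand is $b/(2\ell+n)$, and the constraint $(b-n\ell)/(2\ell+n)\ge 0$ (without which $I_\ell\cap\mathbf N=\emptyset$) restricts $\ell\le b/n$; comparison with an integral then gives
\[
\sum_{\ell_0<\ell\le b/n}\frac{b}{2\ell+n}=O\bigl(b\log(b+1)\bigr).
\]
Combining the two regimes produces the desired bound $Cb\bigl(b-a+\log(b+1)\bigr)$.

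I do not anticipate a serious obstacle here: the argument is essentially a counting of lattice points in a hyperbolic shell. The only subtle point is the threshold $\ell\approx(b-a)/2$ at which the ``$+1$'' in the bound on $N_\ell$ overtakes the length $(b-a)/(2\ell+n)$, and this crossover is precisely what produces the logarithmic correction. As a sanity check, when $b-a<1$ the first regime is vacuous, $N_\ell\le 1$ throughout, and only the $O(b\log(b+1))$ term survives, consistent with the statement.
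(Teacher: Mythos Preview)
Your argument is correct. The paper does not give a proof of this lemma here but cites the authors' earlier work \cite{CaPe}; your lattice-point count---fixing $\ell$, bounding the number and size of admissible $\ell'$, and splitting at the threshold $2\ell+n\approx b-a$ where the interval $I_\ell$ drops below length $1$---is the standard way to count in a hyperbolic shell and is what one expects there as well.
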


 \begin{lemma}\label{dispersive_tpiccoli}
There exists a constant $C>0$ such that
the estimates \eqref{normaLinfinitogenerale-cono} 
and \eqref{normaLinfinitogenerale-spigoli} 
hold in the following cases:
\begin{itemize}
\item[(i)] when $t\in I_2$;\smallskip
\item[(ii)] when $h\ge \varepsilon_0$, for all $t\in I_s$;
\end{itemize}
where $I_2$ and $I_s$ are defined in Theorem \ref{dispersive}.
 \end{lemma}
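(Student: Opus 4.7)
The plan is to prove both \eqref{normaLinfinitogenerale-cono} and \eqref{normaLinfinitogenerale-spigoli} simultaneously in the two regimes by brute force, discarding the oscillation entirely: since the cut-offs $\psi(\ell'/\ell)$ and $1-\psi(\ell'/\ell)$ are both dominated by $1$, the triangle inequality together with the pointwise bound \eqref{2.7}, $|Z_{\ell,\ell'}(z,w)|\le d_{\ell,\ell'}/\omega_{2n+1}$, reduces the left-hand side of both \eqref{normaLinfinitogenerale-cono} and \eqref{normaLinfinitogenerale-spigoli} to
$$
S(h):=\sum_{\lambda_{\ell,\ell'}\in[ah^{-2},\,bh^{-2}]} d_{\ell,\ell'},
$$
using that $\varphi$ is supported in $[a,b]$.

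The next step is to bound $S(h)$ by $Ch^{-Q}$. From \eqref{dimensione} one has $d_{\ell,\ell'}\le C(\ell+1)^{n-1}(\ell'+1)^{n-1}(\ell+\ell'+n)$ uniformly in $\ell,\ell'\ge 0$. For indices with $\lambda_{\ell,\ell'}=2\ell\ell'+n(\ell+\ell')\le bh^{-2}$ one has both $\ell\ell'\lesssim h^{-2}$ and $\ell+\ell'\lesssim h^{-2}$, hence $(\ell+1)(\ell'+1)\lesssim h^{-2}$ and therefore $(\ell+1)^{n-1}(\ell'+1)^{n-1}\lesssim h^{-2(n-1)}$. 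Factoring out this uniform bound and invoking Lemma \ref{sommeiperboli} with $a\rightsquigarrow ah^{-2}$, $b\rightsquigarrow bh^{-2}$ to control $\sum(\ell+\ell'+n)$, I obtain
$$
S(h)\le Ch^{-2(n-1)}\cdot Ch^{-2}\!\left((b-a)h^{-2}+\log(bh^{-2}+1)\right)\le Ch^{-2(n+1)}=Ch^{-Q}.
$$

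It remains to compare $S(h)$ with $|t|^{-Q/2}$ in the two cases. In case (i), $|t|\le h^2$ so $|t|^{-Q/2}\ge h^{-Q}\ge C^{-1}S(h)$, which immediately gives both \eqref{normaLinfinitogenerale-cono} and \eqref{normaLinfinitogenerale-spigoli} on $I_2$; in particular this establishes part (ii) of Theorem \ref{dispersive}. In case (ii), $h\ge\varepsilon_0$ implies $S(h)\le C(\varepsilon_0)$ (the sum being finite with uniformly bounded terms), while $|t|\le ch^s\le c$ (using $h\le 1$) gives $|t|^{-Q/2}\ge c^{-Q/2}$; hence $S(h)\le C(\varepsilon_0)\le C(\varepsilon_0,c)|t|^{-Q/2}$, concluding the proof.

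There is no real obstacle here: the result is essentially a crude counting argument, and the only point requiring a little care is extracting the power $h^{-2(n-1)}$ from the binomial factors in $d_{\ell,\ell'}$ before applying Lemma \ref{sommeiperboli}. The substance of Theorem \ref{dispersive} is of course in the complementary range $h^2\le|t|\le ch^s$ with $h<\varepsilon_0$, handled in Steps 2--5, where the oscillation of $e^{it\lambda_{\ell,\ell'}}$ must actually be exploited.
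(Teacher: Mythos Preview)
Your proof is correct and follows essentially the same approach as the paper: both discard the oscillation, use the pointwise bound \eqref{2.7} on $Z_{\ell,\ell'}$, extract a factor of $h^{-2(n-1)}$ from $d_{\ell,\ell'}$ (you via $(\ell+1)(\ell'+1)\lesssim h^{-2}$, the paper via $d_{\ell,\ell'}\le\lambda_{\ell,\ell'}^{n-1}(\ell+\ell')$, which is the same thing), and then apply Lemma~\ref{sommeiperboli} to obtain $S(h)\le Ch^{-Q}$. Your treatment of case~(ii) is slightly more explicit than the paper's, but the argument is the same.
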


\begin{proof}
For $z,w\in S^{2n+1}$,  
$|t|\le h^2$, using \eqref{2.7}
and
Lemma \ref{sommeiperboli} we have 
\begin{align*} 
\Big| 
\sum_{a/h^2< {\lambda}_{\ell,\ell'} <b/h^2}
 e^{it{\lambda}_{\ell,\ell'}}\varphi (h^2 {\lambda}_{\ell,\ell'})Z_{\ell,\ell'} (z,w)\Big| 
&\le 
 \sum_{a/h^2< {\lambda}_{\ell,\ell'} <b/h^2}
\frac{d_{\ell,\ell'}}{\omega_{2n+1}} 
\le   \sum_{a/h^2< {\lambda}_{\ell,\ell'} <b/h^2}
 {\lambda}_{\ell,\ell'}^{n-1}(\ell+\ell') \\
& \le \frac{C}{h^{2n-2}}
 \sum_{a/h^2< {\lambda}_{\ell,\ell'} <b/h^2}
(\ell+\ell')  \\
&
 \le  \frac{C}{|h|^{2n+2}}\,,
\end{align*}
for a suitable positive constant $C$.  

If $|t|\le h^2$, 
conclusion (i) follows at once.  If $h\ge \varepsilon_0$, (ii) also follows
at once, since 
$$h^{-(2n+2)}\le \varepsilon_0^{-2}h^{-2n}\le \varepsilon_0^{-2} |t|^{-Q/2}\, .$$
This proves the lemma.
\end{proof}

Observe that this lemma in particular proves the estimate
contained in \eqref{normaLinfinitogenerale-spigoli}-- in fact this is
the trivial part of the estimate, and it does provide no improvement
with respect to the Sobolev embedding theorem.
\medskip

\noindent 
{\bf{Step 2.}}
The dispersive estimate \eqref{stimadisp-cono} 
follows easily also when $\langle z,w\rangle$ varies in 
a  compact subset of 
the unit disk, as a consequence of the following result. First
we recall the following estimates for Jacobi polynomials,
see e.g. [BoCl, page 231],
\begin{equation}\label{inequalities}
\Big| P^{(\alpha,\beta)}_{\ell}(\cos\theta)\Big|
\le
 \begin{cases}\!
C
{\ell^{ \alpha}}
&\,\text{if
  $0\le \theta\le {\frac{\pi}{2}} $, }\cr
C \ell^{-1/2} \theta^{-\alpha-\frac12}
&\,\text{if ${0<
\theta\le {\frac{\pi}{2}}
}$,}\cr
C \ell^{-1/2}
|\pi-\theta|^{-\beta-\frac12}
&\,\text{if $
{\frac{\pi}{2}}\le\theta<\pi$,}\cr
C
{\ell^{ \beta}}
&\,\text{if
  ${\frac{\pi}{2}}\le\theta\le\pi$. }\cr
\end{cases}
\end{equation}

 \begin{lemma}\label{dispersive_compatti}
Let $0<\varepsilon_1<\frac{\pi}{2}$ be fixed and set 
$$
\mathcal K_{\varepsilon_1}:= 
\big\{ (z,w)\in S^{2n+1}\times S^{2n+1}:\, \langle
z,w\rangle=e^{i\omega}\cos\theta,\ 
\omega\in [0,2\pi]\,,\, \theta\in [\varepsilon_1,
\pi/2]\big\} \, .
$$ 
Then, there exists $C>0$ such that
\begin{equation}\label{normaLinfinitogeneraleCeo}
\sup_{ (z,w)\in {\mathcal K}_{\varepsilon_1}} \Big|
 \sum_{\ell, \ell'\ge 0}   e^{it{\lambda}_{\ell,\ell'}}\varphi (h^2 {\lambda}_{\ell,\ell'})Z_{\ell,\ell'}(z,w)
\Big| 
\le  \frac{C}{|t|^{Q/2}}\, .
 \end{equation} 
 \end{lemma}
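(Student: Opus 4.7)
My plan is to establish the uniform pointwise bound
\begin{equation*}
\sup_{(z,w)\in\mathcal K_{\varepsilon_1}}\Big|\sum_{\ell,\ell'\ge 0}e^{it\lambda_{\ell,\ell'}}\varphi(h^2\lambda_{\ell,\ell'})Z_{\ell,\ell'}(z,w)\Big|\le Ch^{-(n+1)}=Ch^{-Q/2},
\end{equation*}
uniformly in $t\in{\mathbf{R}}$ and $h\in(0,1]$, and then to deduce \eqref{normaLinfinitogeneraleCeo} from it by observing that the lemma is invoked in Step~2 only for $|t|\le h^s$ with $s\ge s_n\ge 1$, so that $|t|\le h$ and therefore $Ch^{-Q/2}\le C|t|^{-Q/2}$.

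The core of the proof is a pointwise estimate on $Z_{\ell,\ell'}$ on $\mathcal K_{\varepsilon_1}$ extracting a geometric decay factor in $|\ell'-\ell|$. Using the symmetry $Z_{\ell,\ell'}(z,w)=\overline{Z_{\ell',\ell}(w,z)}$ I reduce to $\ell'\ge\ell\ge 1$, set $m=\ell'-\ell$, and split on the subregions $\theta\in[\varepsilon_1,\pi/4]$ and $\theta\in[\pi/4,\pi/2]$. On the first one, the second bound in \eqref{inequalities} gives $|P_\ell^{(n-1,m)}(\cos 2\theta)|\le C_{\varepsilon_1}\ell^{-1/2}$, while the factor $(\cos\theta)^m$ in \eqref{zonali} is dominated by $(\cos\varepsilon_1)^m$. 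On the second, where both $\cos\theta$ and $|\pi-2\theta|$ may vanish, I rely on the elementary inequality $|\pi-2\theta|\ge 2\cos\theta$ on $[0,\pi/2]$ (immediate from the monotonicity of $\pi/2-\theta-\cos\theta$) combined with the third bound in \eqref{inequalities}; this converts the product $(\cos\theta)^m|P_\ell^{(n-1,m)}(\cos 2\theta)|$ into $C\ell^{-1/2}2^{-m}|\pi-2\theta|^{-1/2}$. On the residual neighborhood of $\theta=\pi/2$ where $\cos\theta\le 1/(\ell+m)$, I instead use the fourth bound $|P_\ell^{(n-1,m)}(\cos 2\theta)|\le C\binom{\ell+m}{m}$, for which $(\cos\theta)^m|P|\le C/m!$. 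Combining these estimates with the algebraic simplification
\begin{equation*}
\frac{d_{\ell,\ell'}}{\omega_{2n+1}}\cdot\frac{\ell!(n-1)!}{(\ell+n-1)!}\approx c_n(\ell+\ell')\ell'^{n-1},
\end{equation*}
one obtains a uniform bound of the form $|Z_{\ell,\ell'}(z,w)|\le C_{\varepsilon_1}(\ell+\ell')\ell'^{n-1}\gamma^{|\ell'-\ell|}$ on $\mathcal K_{\varepsilon_1}$, with $\gamma=\gamma(\varepsilon_1)<1$; the boundary cases $\ell=0$ or $\ell'=0$ contribute only super-exponentially small terms via the factor $(\cos\varepsilon_1)^{\max(\ell,\ell')}$.

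The summation step is then routine. The spectral cutoff $\varphi(h^2\lambda_{\ell,\ell'})$ restricts to $\lambda_{\ell,\ell'}\in[a/h^2,b/h^2]$. Parametrizing by $m=|\ell'-\ell|\ge 0$, for each fixed $m$ the lattice points $(\ell,\ell+m)$ in the spectral window satisfy $\ell,\ell+m\sim h^{-1}$ and their number is $O(h^{-1})$, since $d\lambda_{\ell,\ell+m}/d\ell\sim h^{-1}$ and the window has $\lambda$-width $O(h^{-2})$. The $m$-slab therefore contributes at most $Ch^{-1}\cdot h^{-1}\cdot h^{-(n-1)}\gamma^m=Ch^{-(n+1)}\gamma^m$, and the geometric sum over $m\ge 0$ delivers the required $Ch^{-(n+1)}$.

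The main technical obstacle is the pointwise estimate on $Z_{\ell,\ell'}$ near $\theta=\pi/2$, where a naive use of \eqref{inequalities} degenerates because both $\cos\theta$ and $|\pi-2\theta|$ tend to zero. The inequality $|\pi-2\theta|\ge 2\cos\theta$ is the decisive algebraic ingredient, turning an apparent $|\pi-2\theta|^{-m}$ blow-up into a genuine $2^{-m}$ decay. The saturating configuration is the diagonal term $m=0$ evaluated at $\theta=\pi/2$, for which $|P_\ell^{(n-1,0)}(-1)|=1$ and hence $|Z_{\ell,\ell}|\sim\ell^n$; this is what produces the bound $h^{-(n+1)}=h^{-Q/2}$ and explains why the argument cannot do better than the admissibility $s\ge 1$ implicit in the lemma.
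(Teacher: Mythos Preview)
Your overall strategy—bound $|Z_{\ell,\ell'}|$ pointwise on $\mathcal K_{\varepsilon_1}$ via the Jacobi estimates \eqref{inequalities}, extract geometric decay from the factor $(\cos\theta)^{|\ell'-\ell|}$ in \eqref{zonali}, and sum—is exactly the paper's. The difference is that the paper uses the first and third/fourth bounds in \eqref{inequalities} and stops at $\sum|Z|\le Ch^{-2n}$, whereas you appeal to the second and third bounds (together with the pleasant inequality $|\pi-2\theta|\ge 2\cos\theta$) and claim the sharper $\sum|Z|\le Ch^{-(n+1)}$.

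There is, however, a genuine gap. The constants in \eqref{inequalities} depend on the Jacobi parameter $\beta=m$, so the second bound does \emph{not} yield $|P_\ell^{(n-1,m)}(\cos 2\theta)|\le C_{\varepsilon_1}\ell^{-1/2}$ uniformly in $m$. The correct uniform Darboux-type estimate carries an additional factor $(\cos\theta)^{-m-1/2}$, which exactly cancels the $(\cos\theta)^m$ from the zonal formula; what survives is only $(\cos\theta)^m|P_\ell^{(n-1,m)}(\cos 2\theta)|=O\big(\ell^{n-1}N^{-(n-1/2)}\big)$, with \emph{no} residual $\gamma^m$. For $n=1$ this is transparent: the product $(\cos\theta)^m P_\ell^{(0,m)}(\cos 2\theta)$ is precisely the Zernike radial polynomial $R_{2\ell+m}^m(\cos\theta)$. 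Taking $\ell=m=L$ and any $\theta\in[\varepsilon_1,\pi/4]$ gives $\cos\theta\ge 1/\sqrt2>1/3=m/N$, placing $\cos\theta$ in the oscillatory zone of $R_{3L}^L$, where $|R_{3L}^L(\cos\theta)|\sim L^{-1/2}$ rather than $\gamma^L$. (The same non-uniformity afflicts your use of the third inequality on $[\pi/4,\pi/2]$: for $\ell=m=L$ and $\theta=\pi/3$ the bound $C\ell^{-1/2}|\pi-2\theta|^{-m-1/2}$ forces $C\to\infty$.)

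Consequently your pointwise estimate $|Z_{\ell,\ell'}|\le C(\ell+\ell')\ell'^{n-1}\gamma^{|\ell'-\ell|}$ is false in the balanced regime $m\sim\ell\sim h^{-1}$. That regime already contains $\sim h^{-2}$ lattice points on the spectral hyperbola, each with $|Z|\sim h^{-(n-1/2)}$, so $\sum|Z|\gtrsim h^{-(n+3/2)}$, strictly larger than your target $h^{-(n+1)}$; the triangle-inequality route you take cannot reach that exponent.
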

 \begin{proof} 
The proof is simple since again in this case we do not need to
consider the oscillations of the kernel.
By symmetry in the parameters $\ell$ and $\ell'$, in
\eqref{normaLinfinitogeneraleCeo} it suffices 
to consider the case $\ell'\ge\ell$. 

Assume first that $\theta\in [\varepsilon_1, \frac{\pi}{4}]$.
In this case, for $ (z,w)\in {\mathcal K}_{\varepsilon_1}$, we have,
using
the first inequality in \eqref{inequalities} and \eqref{dimensione}
 \begin{align*} 
& \Big|
\sum_{\ell, \ell'\ge1}
 e^{it{\lambda}_{\ell,\ell'}}\varphi (h^2 {\lambda}_{\ell,\ell'})Z_{\ell,\ell'} (z,w)\Big| \\
&\quad \le C 
\sum_{\ell, \ell'\ge1}
\frac{d_{\ell,\ell'}}{\ell^{n-1}}(\cos\theta)^{\ell'-\ell}
\Big|P_{\ell}^{(n-1,{\ell'-\ell})}(\cos 2\theta) \Big|\\
& \quad \le
\frac{C}{h^{2n-2}} 
\sum_{a/h^2 \le {\lambda}_{\ell,\ell'}\le b/h^2}
(\ell+\ell') (\cos\theta)^{{\ell'-\ell}}\\
&\quad \le
\frac{C}{h^{2n-2}} 
\bigg(
\sum_{\ell=0}^{\lfloor c/h\rfloor}
\sum_{{\ell'-\ell}=0}^{\lfloor c/h\rfloor}
(\ell'-\ell)(\cos\theta)^{{\ell'-\ell}}
+
\sum_{\ell=0}^{\lfloor c/h\rfloor} 2\ell\,\sum_{{\ell'-\ell}=0}^{\lfloor c/h\rfloor}
(\cos\theta)^{{\ell'-\ell}}\bigg)
\le \frac{C}{h^{2n}}  \,,
 \end{align*}
since $\theta\in [\varepsilon_1, \pi/4]$.
 
Next, 
when $\theta\in[\pi/4, \pi/2) $, 
we observe that the sum vanishes when $\theta=\pi/2$ and then we
split it into two parts.  Recalling that we are assuming
$\ell\le\ell'$ we set
\begin{align*}
E_1 &= \big\{ (\ell,\ell')\in {\mathbf{N}}^2: \ a/h^2 < {\lambda}_{\ell,\ell'}<  b/h^2, \
\ell> 1/|\pi-2\theta|\big\}\, , \smallskip \\
E_2 &= \big\{ (\ell,\ell')\in {\mathbf{N}}^2: \ a/h^2 < {\lambda}_{\ell,\ell'}< b/h^2, \
\ell\le 1/|\pi-2\theta|\big\}\, .
\end{align*}
Then we have
\begin{align*} 
& \Big|
\sum_ {\ell'\ge\ell\ge1}
 e^{it{\lambda}_{\ell,\ell'}}\varphi (h^2 {\lambda}_{\ell,\ell'})Z_{\ell,\ell'} ((z,w))\Big| \\
& \quad \le 
\sum_{(\ell,\ell')\in E_1} 
\varphi (h^2 {\lambda}_{\ell,\ell'})\big| Z_{\ell,\ell'} ((z,w))\big| +
\sum_{(\ell,\ell')\in E_2} 
\varphi (h^2 {\lambda}_{\ell,\ell'})\big| Z_{\ell,\ell'} ((z,w))\big|\\
& \quad =:
S_1+S_2\, .
\end{align*}

Using the last inequality in \eqref{inequalities} we have
\begin{align*}
S_1&\le
C 
\sum_{(\ell,\ell')\in E_1}
\frac{d_{\ell,\ell'}
}{\ell^{n-1}}(\cos\theta)^{{\ell'-\ell}}
\Big|P_{\ell}^{(n-1,{\ell'-\ell})}(\cos 2\theta) \Big|
\\
&\le
C
\sum_{(\ell,\ell')\in E_1}  
 \frac{d_{\ell,\ell'}
}{\ell^{n-1}}
(  \pi/2-\theta)^{{\ell'-\ell}}
{\ell}^{{\ell'-\ell}} \\
&\le
\frac{C}{h^{2n-2}} 
\sum_{\ell=0}^{\lfloor c/h\rfloor}\sum_{{\ell'-\ell}=0}^{\lfloor c/h\rfloor}
\Big(
 \frac{1}{\ell^{n-1}}{{{(\ell'-\ell)}}}\,2^{-{(\ell'-\ell)}}
 +
 \frac{1}{\ell^{n-2}}\,2^{-{(\ell'-\ell)}}\Big)\\
& \le \frac{C}{h^{2n}}  \, .
 \end{align*}

An analogous bound may be proved for $S_2$ using
the third 
inequality in \eqref{inequalities}, we finally obtain
\eqref{normaLinfinitogeneraleCeo}.
 \end{proof} 
\medskip

\section{The main estimate}\label{Steps3-4}
\medskip

\noindent {\bf{Step 3.}}
Now we turn to the estimate
\eqref{stimadisp-cono}.
As a consequence of what has been proved in
Steps 1-2, from now on we may assume
that $h$ is
sufficiently small, and precisely that
$0<h<\varepsilon_0$,
and that  $t\in A$ where
\begin{equation}\label{A-def}
A:
=
\big\{ t:\,  h^2\le |t|\le  c h^s\big\}\, ,
\end{equation} 

Recall also that, because of the presence of the cut-off function
$\psi$ in \eqref{stimadisp-cono}, we may consider the parameters
$\ell,\ell'$ to be such that
$1/M<\ell'/\ell\le M$, where $M>1$ is a fixed (large) constant.

Starting from \eqref{normaLinfinitogenerale-cono} we now wish to show
that for every $\kappa>0$ there exists $C>0$ such that
\begin{equation}\label{normasemplificata}
\sup_{(z,w)\in\Omega} \Big|
 \sum_ {{\ell,\ell'=1}}^{+\infty} 
  e^{it{\lambda}_{\ell,\ell'}}\varphi (h^2{\lambda}_{\ell,\ell'} )\psi(\ell'/\ell)Z_{\ell,\ell'} (z,w)\Big|\le C\frac{1}{h^{2n+\kappa}}\, ,
\end{equation} 
for all $t\in A$, where 
$$
\Omega= 
\big\{ (z,w)\in S^{2n+1}\times S^{2n+1}:\, \langle z,w\rangle e^{i\omega}\cos\theta\, ,\ \text{where\ }  0\le \theta\le \varepsilon_1\,,\
\omega\in [0,2\pi)\big\} \, .
$$

We shall need to differentiate the proof between the cases $n=1$ and $n>1$
only at the end of Step 5, so that we will not distinguish 
between  different values of $n$ until 
Proposition \ref{estimate-Sigma-pm}.

We notice that we may assume $t>0$, since passing to the complex
conjugate in \eqref{normasemplificata} would change $Z_{\ell,\ell'}$ into
$Z_{\ell',\ell}$. \medskip

It turns out to be  convenient to  further simplify the problem, by separating
the cases $\ell=\ell'$,
$\ell<\ell'$ and $\ell>\ell'$.

We can do this by introducing yet
another cut-off function. 
 We let $\eta_0$ be an even cut-off function, 
identically 1 for $|\xi|\le1/4$ and identically 0 for $|\xi|\ge1/2$.
Then we write $1=\eta_0(\xi) +\eta_-(\xi)+\eta_+(\xi)$, where
$\eta_\pm(\xi)=\big(1-\eta_0(\xi)\big)\chi_{[0,+\infty)}(\pm\xi)$.
Accordingly, we decompose the sum in \eqref{normasemplificata}
 as
\begin{equation}\label{third-spe-dec}
K^0 +K^+ +K^-\, .
\end{equation}

It is easy to estimate $K^0$, since it  coincides with  the  sum in
\eqref{normasemplificata} restricted to the diagonal terms
$\ell=\ell'$ and  in this case the 
sum reduces to a summation in one variable.
\begin{lemma}\label{diag-case}
For all $t\in A$ we have
\begin{equation}\label{norma-diag}
\sup_{(z,w)\in\Omega} \Big|
 \sum_ {{\ell, \ell'=1}}^{+\infty} 
  e^{it\lambda_{\ell,\ell'}}\varphi (h^2\lambda_{\ell,\ell'} )\eta_0
  (\ell-\ell')
Z_{\ell,\ell'} (z,w)\Big|\le C\frac{1}{h^{2n}}\, .
\end{equation} 
\end{lemma}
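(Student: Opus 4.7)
\textbf{Plan for Lemma \ref{diag-case}.} The key observation is that since $\ell$ and $\ell'$ are non-negative integers and $\eta_0$ is supported in $[-1/2,1/2]$, the factor $\eta_0(\ell-\ell')$ forces $\ell=\ell'$. Thus $K^0$ collapses to a single-index sum
\[
K^0(z,w)=\sum_{\ell\ge 1} e^{it\lambda_{\ell,\ell}}\,\varphi(h^2\lambda_{\ell,\ell})\,Z_{\ell,\ell}(z,w),
\]
with $\lambda_{\ell,\ell}=2\ell^2+2n\ell$.

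The plan is to forget the oscillation entirely and just bound the sum by the triangle inequality using the pointwise estimate \eqref{2.7}, namely $|Z_{\ell,\ell}(z,w)|\le d_{\ell,\ell}/\omega_{2n+1}$. First I would count the active indices: since $\varphi$ has support in $[a,b]$, the condition $\varphi(h^2\lambda_{\ell,\ell})\ne 0$ confines $\ell$ to an interval $I_h\subset{\mathbf{N}}$ centered around $\ell\sim 1/h$ of length $O(1/h)$ (solving $2\ell^2+2n\ell\in[a/h^2,b/h^2]$). Second, from \eqref{dimensione} one has $d_{\ell,\ell}\approx \ell^{2n-1}$ for large $\ell$, so on $I_h$ we get $d_{\ell,\ell}\le C h^{-(2n-1)}$.

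Combining these two facts,
\[
\sup_{(z,w)\in\Omega}|K^0(z,w)|\;\le\;\sum_{\ell\in I_h}\frac{d_{\ell,\ell}}{\omega_{2n+1}}\;\le\;C\cdot |I_h|\cdot h^{-(2n-1)}\;\le\;C\,h^{-2n},
\]
which is exactly \eqref{norma-diag}. (Alternatively, one can apply Lemma \ref{sommeiperboli} to the diagonal subsum to obtain the same count $\sum_{\lambda_{\ell,\ell}\sim h^{-2}}\ell \le C h^{-2}$, whence $\sum d_{\ell,\ell}\le C h^{-(2n-2)}\sum \ell \le C h^{-2n}$.)

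There is no real obstacle here: since the diagonal contribution is already a one-dimensional sum, one gains nothing from oscillation and the elementary volume count suffices. In particular, this bound $h^{-2n}$ is strictly better than the target $h^{-(2n+\kappa)}$ in \eqref{normasemplificata}, which is consistent with the fact that the diagonal piece, being one-dimensional rather than two-dimensional, is genuinely easier than the off-diagonal contributions $K^\pm$ that will be handled in Steps~4--5.
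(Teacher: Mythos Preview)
Your proposal is correct and follows essentially the same approach as the paper: both collapse the sum to the diagonal $\ell=\ell'$ via the support of $\eta_0$, discard the oscillation by the triangle inequality, invoke the pointwise bound \eqref{2.7} on $Z_{\ell,\ell}$, and then use $d_{\ell,\ell}\approx \ell^{2n-1}$ together with the fact that there are $O(1/h)$ active indices $\ell\sim 1/h$. The paper's write-up records $d_{\ell,\ell}\le C(\lambda_{\ell,\ell})^{n-1}\ell$ rather than $\ell^{2n-1}$, but since $\lambda_{\ell,\ell}\approx\ell^2$ this is the same estimate.
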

\begin{proof}
We easily check that 
\begin{align*}%\label{norma-diag}
\sup_{(z,w)\in\Omega} \Big|
 \sum_ {{\ell, \ell'=1}}^{+\infty} 
  e^{it\lambda_{\ell,\ell'}}\varphi (h^2\lambda_{\ell,\ell'} )
\eta_0 (\ell-\ell')Z_{\ell,\ell'} (z,w)\Big|
&=\sup_{(z,w)\in\Omega} \Big|
 \sum_ {{\ell=1}}^{+\infty} 
  e^{it\lambda_{\ell,\ell}}\varphi (h^2\lambda_{\ell,\ell} )Z_{\ell,\ell} (z,w)\Big|\\
 &\le C  \Big|
 \sum_ {{\ell=1}}^{+\infty} 
\varphi (h^2\lambda_{\ell,\ell} )(\lambda_{\ell,\ell})^{n-1}\ell \Big|\le C\frac{1}{h^{2n}}\, \\
\end{align*} 
for all $t\in A$.
\end{proof}

We are left with the estimate of 
$K^{\pm}(\omega,\theta)$, where 
\begin{align}
K^{\pm}(\omega,\theta) :=  \sum_ {\ell,\ell'=0 }^{\infty}
e^{it{\lambda}_{\ell,\ell'}} \varphi (h^2 {\lambda}_{\ell,\ell'})
\eta_\pm(\ell'-\ell)
\psi(\ell'/\ell)Z_{\ell,\ell'} (z,w)
 \, .
\end{align}

Our proof of
 \eqref{normasemplificata}, with the inner sum replaced by $K^\pm$,
hinges on 
Lemma \ref{stimeFitohui} below
and
on Lemma \ref{Poisson}, which will be proved in Step 4.

First,
we  need
a representation of the Jacobi polynomials $P^{(\alpha,\beta)}_d$
showing explicitly the
dependence on the parameters.  To this end, we use a very precise representation of 
$P^{(\alpha,\beta)}_d$
 due to A. Fitohui and M. M. Hamza
\cite{FH}.
We denote by $J_\nu$ the Bessel function of order $\nu$.

\begin{lemma}\label{stimeFitohui}
Let $\alpha>-\frac{1}{2}$, $\beta>-1$, $d$ a positive integer. Then
\begin{multline}
\big( \sin\theta\big)^{\alpha+1/2}
\big( \cos \theta\big)^{\beta+1/2} 
P^{(\alpha,\beta)}_{d}(\cos 2\theta) \\
=\frac{\Gamma (d+\alpha+1)}{d!} \theta^{1/2}
\Bigg(
\sum_{p=0}^{m} \theta^p Q_{2p}(\beta,\theta)
\frac{J_{\alpha+p}(N \theta)}{N^{\alpha+p}}
+ \theta^{m+1}\mathcal R_{m,N}(\theta)\Bigg) \, , \label{Jacobi-expansion}
\end{multline}
where 
$N:= 2d+ \alpha+\beta+1$,
 the functions $Q_{2p}(\beta,\theta)$ are  polynomials of degree $2p$ in
$\beta$ and
analytic in $\theta\in[0,\pi/2)$, and
 \begin{equation}\label{restoFitohui}
\mathcal R_{m,N}=\mathcal O \big(
N^{-(\alpha+m+3/2)}\big)\,,
\end{equation}
as $N\to+\infty$, uniformly in $\theta\in[0, \frac\pi2-\tilde\varepsilon]$, $\tilde\varepsilon>0$ being arbitrary.
 \end{lemma}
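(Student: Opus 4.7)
The representation is a quantitative refinement of the classical Mehler--Heine formula, with the higher-order corrections and their polynomial dependence on $\beta$ made explicit. My plan is to follow the approach of Fitohui and Hamza in \cite{FH}, to whom the statement is attributed: transform the Jacobi differential equation into a perturbed Bessel equation, build a formal series in $\theta$ whose building blocks are the Bessel functions $J_{\alpha+p}(N\theta)$, and finally bound the truncation error by a variation-of-parameters argument.

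The first step is to transform the Jacobi equation
$$(1-x^2)y'' + \bigl(\beta - \alpha - (\alpha+\beta+2)x\bigr)y' + d(d+\alpha+\beta+1)y = 0$$
satisfied by $y(x) = P_d^{(\alpha,\beta)}(x)$. Setting $x = \cos 2\theta$ and introducing the Liouville substitution
$$u(\theta) := (\sin\theta)^{\alpha+1/2}(\cos\theta)^{\beta+1/2}\, y(\cos 2\theta),$$
a direct computation yields a Schrödinger-type equation
$$u''(\theta) + \Bigl(N^2 - \frac{\alpha^2 - 1/4}{\theta^2} + W(\beta,\theta)\Bigr) u(\theta) = 0,$$
where $N = 2d + \alpha + \beta + 1$ and $W(\beta,\theta)$ is analytic in $\theta$ on $[0,\pi/2)$ and polynomial in $\beta$ of fixed degree. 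The unperturbed equation (dropping $W$) admits $\theta^{1/2} J_\alpha(N\theta)$ as a solution, matching the leading term in \eqref{Jacobi-expansion}.

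The second step is to construct $u$ via the ansatz
$$u(\theta) = \theta^{1/2}\sum_{p \ge 0} \theta^p\, Q_{2p}(\beta,\theta)\, \frac{J_{\alpha+p}(N\theta)}{N^{\alpha+p}}.$$
Plugging this into the ODE and using the standard recurrences $J_\nu'(z) = J_{\nu-1}(z) - (\nu/z)J_\nu(z)$ and $2\nu J_\nu(z) = z\bigl(J_{\nu-1}(z) + J_{\nu+1}(z)\bigr)$ to re-express $J_{\alpha+p}''$ and shift indices, one obtains a triangular system of first-order ODEs for the coefficients $Q_{2p}$. These are solved recursively, with $Q_0$ fixed by the boundary value $P_d^{(\alpha,\beta)}(1) = \binom{d+\alpha}{d}$ (which also pins down the prefactor $\Gamma(d+\alpha+1)/d!$). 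The recursion, together with the polynomial character of $W$ in $\beta$, forces each $Q_{2p}(\beta,\theta)$ to be a polynomial of degree $2p$ in $\beta$ and analytic in $\theta$ on $[0,\pi/2)$, as claimed.

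The remainder estimate \eqref{restoFitohui} is then obtained from a variation-of-parameters representation of the truncation error relative to the Bessel operator, estimating the resulting integral through the uniform Bessel bounds $|J_\nu(N\theta)| \lesssim \min\{(N\theta)^\nu,(N\theta)^{-1/2}\}$. The main obstacle, and the reason for the restriction $\theta \in [0, \pi/2 - \tilde\varepsilon]$, is the degeneration of the factor $\cos\theta$ in the Liouville transform as $\theta \to \pi/2$: one loses uniform control of $W(\beta,\theta)$ and of the Green's function of the unperturbed Bessel operator near that endpoint. On any compact subinterval $[0, \pi/2 - \tilde\varepsilon]$, however, this difficulty is absent, and the bound \eqref{restoFitohui} follows by standard asymptotic analysis of the constructed series.
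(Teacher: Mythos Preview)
Your proposal is correct and follows the same route as the paper: both rely on the Fitouhi--Hamza construction, transforming the Jacobi equation by the Liouville change of variable into a perturbed Bessel equation and building the expansion from the recursion for the coefficient functions. The paper's proof is terser only because it invokes \cite{FH} directly for the expansion \eqref{Jacobi-expansion} and for the remainder bound \eqref{restoFitohui}, and then isolates the one point not explicit in \cite{FH}: quoting the recursion
\[
B_0=1,\qquad (x^{p+1}B_{p+1})' = -\tfrac12\,x^p\bigl(B_p''+\tfrac{1-2\alpha}{x}B_p'+\chi B_p\bigr),
\]
where $\chi$ contains the factor $\tfrac14-\beta^2$, and observing by induction that each step raises the degree in $\beta$ by exactly~$2$. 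Your sketch arrives at the same conclusion via the same mechanism (your $W(\beta,\theta)$ is the paper's $\chi$, quadratic in $\beta$), so the two arguments coincide in substance; you simply reconstruct more of the cited reference, while the paper cites it and records only the $\beta$-degree check.
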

 
\begin{proof}
This is just a restatement of Theorem 4 in  \cite{FH}; notice however
that the parentheses are missing on the right hand side of 
(6.6) in \cite{FH}. 
 By formula
(6.6) in \cite{FH}, setting $x=2\theta$ and $B_p(x)=Q_{2p}(\beta,\theta)$ (and
calling $N$ what is $2N$ in \cite{FH}) we immediately obtain
\eqref{Jacobi-expansion}. \medskip

Next, the functions $B_p (x)$
are recursively defined by
  \begin{equation}\label{recursive}
B_0(x)=1\, ,\quad
\big( x^{p+1}B_{p+1}(x)\big)'
=
-\frac12 x^p
\Big(
B_p^{''}(x)+\frac{1-2\alpha}{x}
B^{'}_p (x)+\chi (x)B_p (x)\Big)\, ,
\end{equation}
where 
$$
\chi (x)
=
\bigg(\frac{1}{4}-\alpha^2\bigg)
\bigg( \frac{1}{4\sin^2 (x/2)}-\frac{1}{x^2}\bigg)
+\bigg( \frac14-\beta^2\bigg) \frac{1}{4\cos^2 (x/2)}\, ,
$$
and it turns out that the $B_p(x)$ are analytic for $x\in[0,\pi)$ (see
Section 6.3 in \cite{FH}).

From the recursive relation \eqref{recursive} it is easy to see that
the functions $B_p$ are polynomials of degree $2p$ in the index $\beta$.
(We point out that, for our purposes,
the dependence on $\alpha$ is not relevant, since $\alpha=n-1$ 
and it is not related to the  the indexes $\ell$, $\ell'$, while $\beta=|\ell'-\ell|$).
In fact, the recursive relation can be restated by saying that
$$-2B_{p+1}=H_{p+1} \big(
(L+\chi)
B_p\big)\,,$$
where
$H_{p+1}$ is the integral operator (3.3) in \cite{FH},
independent of $\alpha$ and $\beta$, and
$L$ is the differential operator given by
$L u=u''+\frac{1-2\alpha}{x}u'$.
Now, the statement about the dependence on $\beta$ follows easily by
induction. 
Hence,
$Q_{2p}(\beta,\theta)$ is 
a polynomial of degree $2p$ in $\beta$ and  analytic in 
$\theta\in 
[0, \frac{\pi}{2})$.

The statement about the remainder term $\mathcal R_{m,N}(\theta)$
is explicit in Theorem 4 (see formula (6.6) again) in \cite{FH}.
\end{proof}

We are going to apply Lemma
\ref{stimeFitohui}, so that we observe that in our case $\alpha=n-1$, $d=\min\{\ell,\ell'\}$ and
$\beta=|\ell'-\ell|$; hence
$N=\ell+\ell'+n$.  

In what follows we denote
by $g_j,\tilde g_{j'}$ polynomials
of degree $j,j'$ resp., in the indicated variables, that again may
have different expression from one line to the next.  Then,  we write 
\begin{align}
\frac{\ell+\ell'+n}{\ell'}
\begin{pmatrix} \ell'+n-1\\
	 \ell'-1\end{pmatrix}
	 =
& ({\ell+\ell'+n})
\frac{(\ell'+n-1)!}{\ell' ! n!}
=
N
g_{n-1} (\ell')
\, ,\label{sommafattoriali}\ 
	 \end{align} 
 and
\begin{equation}\label{sf2}
\frac{\Gamma (\ell+n)}{\ell!} =\tilde g_{n-1}(\ell)\, .
\end{equation}

Then, using 
\eqref{zonali}, \eqref{sommafattoriali} and \eqref{sf2}, writing 
$\langle z,w\rangle=e^{i\omega} \cos\theta$, in the case $\ell'\ge\ell$ we have
 \begin{align}\label{stimabordov}
Z_{\ell,\ell'} (z,w ) %\notag\\
& = \frac{n}{\omega_{2n+1}}
 N g_{n-1}(\ell')
\, e^{i\omega (\ell'-\ell)} 
\big( \sin\theta \big)^{-n+1/2} \big( \cos \theta\big)^{-1/2} \tilde g_{n-1}(\ell) 
\theta^{1/2}\notag\\
& \qquad \qquad \qquad \times \Bigg(
\sum_{p=0}^{m}\theta^p
Q_{2p} (\beta,\theta)
\frac{J_{n-1+p}(N \theta)}{N^{n-1+p}}
+\theta^{m+1}\mathcal R_{m,N}(\theta)\Bigg) \notag\\
&= b(\theta)e^{i\omega (\ell'-\ell)} 
  N g_{n-1}(\ell')
 \tilde g_{n-1}(\ell) \notag \\
&\qquad\qquad\qquad \times \Bigg(
\sum_{p=0}^{m}\theta^{2p}
Q_{2p} (\beta,\theta)
\frac{J_{n-1+p}(N \theta)}{(N\theta)^{n-1+p}}
+\theta^{m-n+2}\mathcal R_{m,N}(\theta)\Bigg) \, ,
\end{align}
where $b$ denotes an entire function of $\theta$.

If $\ell\ge\ell'$ we simply switch the roles between $\ell$ and
$\ell'$ in the formula above.

Then, in order to estimate \eqref{normasemplificata}  it suffices  to bound 
the modulus of 
\begin{align*}
%L^\pm (\omega,\theta): \sum_{a/h^2< {\lambda}_{\ell,\ell'} <b/h^2}
 e^{i[t{\lambda}_{\ell,\ell'}+\omega (\ell'-\ell)]} &\varphi (h^2 {\lambda}_{\ell,\ell'}) \psi(\ell'/\ell)\eta_\pm (\ell-\ell')
 N g_{n-1}(\ell') \tilde g_{n-1}(\ell) \\ %& \qquad \qquad\qquad\times
&\times \Bigg(
\sum_{p=0}^{m}\theta^{2p}
Q_{2p} (\beta,\theta)
\frac{J_{n-1+p}(N \theta)}{(N\theta)^{n-1+p}}
+\theta^{m-n+2}\mathcal R_{m,N}(\theta)\Bigg) \, \medskip
.
 %\label{prima-del-cambio-degli-indici}
\end{align*}

We need to distinguish the cases when $N\theta$ remains bounded and
when it is bounded from below.  Then, let $\chi_1$ be a smooth cut-off
function with compact support such that $0\le\chi_1\le 1$,
$\chi_1(x) =1$ for $0\le x\le
1$ and $\chi_1(x)=0$ for $x\ge2$, and set $\chi_2= 1-\chi_1$.
Therefore, for $j=1,2$  we write
\begin{multline}
K_j^\pm(\omega,\theta):= \sum_{a/h^2< {\lambda}_{\ell,\ell'} <b/h^2}
 e^{i[t{\lambda}_{\ell,\ell'}+\omega(\ell'-\ell)]} \varphi (h^2 {\lambda}_{\ell,\ell'}) \psi(\ell'/\ell)\eta_\pm (\ell-\ell')
 N g_{n-1}(\ell') \tilde g_{n-1}(\ell) \chi_j(N\theta) \\
\times \Bigg(
\sum_{p=0}^{m}\theta^{2p}
Q_{2p} (\beta,\theta)
\frac{J_{n-1+p}(N \theta)}{(N\theta)^{n-1+p}}
+\theta^{m-n+2}\mathcal R_{m,N}(\theta)\Bigg) \, \medskip
. \label{Lj}
\end{multline}

\begin{remark}\label{change-of-parameters}{\rm
We observe that the eigenvalue 
${\lambda}_{\ell,\ell'} =2\big[ (\ell+n/2)(\ell'+n/2)- n^2/4\big]$ and we set
\begin{equation}\label{new-par}
k=\ell+\frac{n}2\, ,\qquad k'=\ell'+\frac{n}2\, .
\end{equation}
Notice that we may consider the quantities $g_{n-1},\tilde g_{n-1}$ as
functions of $k,k'$ resp., and write $N=k+k'$.  
We adopt the convention that, if $n$ is odd, then the symbol
$  \sum_ {k,k'\ge 1 }$
shall denote the sum
over a suitable subset of ${\mathbf{N}}$ shifted by $1/2$.

Moreover, 
the condition $h^2{\lambda}_{\ell,\ell'}\in\operatorname{supp}\varphi$ becomes
\begin{equation*}\label{new-supp-cond}
\frac{a_h}{h^2}\le kk'\le \frac{b_h}{h^2}\, ,
\end{equation*}
where we set
$$
a_h=\frac{a}{2} +\frac{h^2n^2}{4}\, , \quad\text{and}\quad
b_h=\frac{b}{2} +\frac{h^2n^2}{4}\, .
$$
For simplicity of notation we take $0<a'\le a_h$ and $b'\ge b_h$ for all
$h\le 1$.  We also set $c'=\sqrt{b'}$.  

The cut-off function $\varphi(h^2{\lambda}_{\ell,\ell'})$ can be written as
$$
\varphi(h^2 {\lambda}_{\ell,\ell'}) = \varphi \big(h^2 (2kk' -n^2/2)\big) =:
\varphi_h(h^2kk')\, .
$$
We remark that Lemma \ref{Poisson} holds true if the cut-off function
$\varphi$ is replaced by a family of functions $\varphi_\varepsilon$
converging in the 
Schwartz norms to $\varphi$ as $\varepsilon\to0$.  Since the dependence on
$h$ of $\varphi_h$  is ineffective,
with an abuse of notation,
we write again $\varphi$ to denote the functions $\varphi_h$.\medskip

The cut-off function $\psi(\ell/\ell') $, supported when $1/M \le
\ell/\ell'\le M$ is changed into 
$$\psi\big( (k-n/2)/(k'-n/2)\big)=:\tilde\psi(k,k')\, .  
$$
Observed that $\tilde\psi$ is a cut-off function having support
contained in the set $\{1\le k/k'\le M\}$.  Finally, notice that the
support condition of $\varphi$ implies that
\begin{equation*}\label{k-le-k'}
n/2\le k,k' \le c'/h\, .
\end{equation*}

With the change of parameters \eqref{new-par}, the quantity
$\beta$ remains unchanged, and the phase function
$t[{\lambda}_{\ell,\ell'}+\omega\beta]$ becomes $2t[kk'+\omega\beta-n^2/4]$ and we
may 
absorb the factor 2 in the parameter  $t$.
}
\end{remark}

\noindent {\bf{Step 4.}}
We now wish to estimate
 the
modulus of $K_1^\pm (\omega,\theta)$, as defined in \eqref{Lj}. 

We will consider the case of $K_1^+$, the other one being completely
analogous. 

It suffices to
estimate the modulus of
\begin{align} 
& \sum_{p=0}^{m}
\Bigg(  \sum_ {k,k'\ge n/2 }
 e^{i[tkk' +\omega (k'-k)]} \varphi (h^2 kk')
 \tilde\psi(k,k')\chi_1(N\theta) \eta_+ (k'-k)
N g_{n-1} (k')  \tilde g_{n-1}(k) \notag
\\
&\qquad\qquad\qquad\qquad\qquad\qquad\qquad\qquad\qquad\qquad\qquad\qquad\qquad\times
\theta^{2p}Q_{2p} (\beta,\theta)
\frac{J_{n-1+p}(N \theta)}{(N\theta)^{n-1+p}}\Bigg) \notag\\
& \quad + 
\sum_ {k,k'\ge n/2 }
 e^{i[tkk' +\omega (k'-k)]} \varphi (h^2
 kk')\tilde\psi(k,k')\eta_+ (k'-k) \chi_1(N\theta)
N g_{n-1} (k')  \tilde g_{n-1}(k) \theta^{m-n+2}\mathcal R_{m,N}(\theta) \notag\\
&=: \sum_{p=0}^{m} K_p^{1,+}(\omega,\theta)  +K_{\mathcal R}^{1,+}(\omega,\theta) 
\, .\label{sommaspezzata}
\end{align}

Since $ \mathcal R_{m,N}(\theta)=\mathcal O \big( \frac{1}{N^{n+m+1/2}}\big)$
uniformly for $\theta\in[0, \frac\pi2-\varepsilon_1]$, for $\varepsilon_1>0 $, we 
can easily estimate the modulus of the error term $K_\mathcal R^{1,+}(\omega,\theta,N)$
by simply taking the modulus inside the sum.   Observing that
$x\mapsto x^{n-1}\varphi(x)$ is also a smooth function with compact support, and
choosing
$m= \max(n-2,0)$
 \begin{align}
 \big|K_\mathcal R^{1,+} (\omega,\theta)\big|
 &
 =
  \Big|
\sum_ {k,k'\ge n/2 }
 e^{i[tkk' +\omega (k'-k)]} \varphi (h^2 kk')
 \tilde\psi(k,k')\eta_+ (k'-k) \chi_1(N\theta) 
\notag
\\
&\qquad\qquad\qquad\qquad\qquad\times
N g_{n-1} (k') \tilde g_{n-1}(k) \theta^{m-n+2}\mathcal R_{m,N}(\theta) \Big| \notag\\
&
\le \frac{C}{h^{2n-2}}
\sum_ {k=n/2 }^{\lfloor c'/h \rfloor}
\sum_{k'=n/2 }^{\lfloor c'/h \rfloor}
\Big|
 \varphi (h^2 kk') (h^2 kk')^{n-1} 
 \frac{1}{N^{n+m-1/2}} \Big| \notag \\
%&
%\le\frac{C}{h^{2n-2}}\sum_ {k=n/2 }^{\lfloor c'/h \rfloor} \sum_{k'=n/2 }^{\lfloor c'/h \rfloor}
%\frac{1}{(k+k')^{n+m-1/2}}\notag \\
& \le
\frac{C}{h^{2n-2}}\sum_ {k=n/2 }^{\lfloor c'/h \rfloor} 
\sum_{k'=n/2 }^{\lfloor c'/h \rfloor}
\frac{1}{(k+k')^{1/2}} \notag \\
& \le \frac{C}{h^{2n-1/2}}
\, ,
\end{align}
uniformly in $\omega$ and $\theta\in[0,\varepsilon_1]$.
Therefore, with $m= \max(n-2,0)$ we have
\begin{equation}
\label{est-SigmaR}
\big|K_\mathcal R^{1,+} (\omega,\theta)\big|
\le \frac{C}{h^{2n-1/2}}
\end{equation}
for all
$n\ge 1$.\medskip

We turn to the estimate of the main term in \eqref{sommaspezzata},
with $m=\max (n-2, 0)$. \medskip
%Clearly, it suffices to estimate $|\Sigma_p(\omega,\theta)|$ for
%$p=0,\dots,n-1$.

In order to take advantage of the oscillations of the kernel we need
the following estimate for oscillating sums.
We denote by $\hat
f(\xi)$ the Fourier transform of an integrable function $f$ and
defined by $\hat f(\xi)=\int_{\mathbf R} f(x) e^{- 2\pi ix\xi} dx$.
\medskip

%%POISSON 
\begin{lemma}\label{Poisson}
Let $\varphi\in  {\mathcal{C}}_0^{\infty}({\mathbf{R}})$,
$\operatorname{supp}\varphi\subseteq[a,b]$, 
$0<a<b<\infty$, and
let $\sigma$ be a symbol in ${\mathcal{S}}^0$. 
Let $\mu\in{\mathbf{R}}$ and set $\operatorname{dist}(\mu,{\mathbf{Z}})\ge \delta$, for some $\delta>0$.
Then for every $L>1$ 
there exists a positive constant $C_L>0$, depending only on $\varphi$
and $\sigma$, 
such that
for $\delta,\varepsilon>0$ with
$0<\varepsilon\le \delta$,
we have that
\begin{equation}\label{O(1)}\Big|
\sum_{k\in{\mathbf{Z}}}
e^{2\pi i \mu k}
\varphi (\varepsilon k)\sigma (k)\Big|
\le C_L
\max \bigg\{
\frac{\varepsilon^{L-1}}{\delta^L},1
\bigg\}\, ,
\end{equation}
as $\varepsilon\to 0$.
\end{lemma}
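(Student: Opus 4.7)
The plan is to apply the Poisson summation formula to convert the oscillating sum into a sum of Fourier transforms, and then to exploit the non-resonance condition $\operatorname{dist}(\mu,{\mathbf{Z}})\ge\delta$ through repeated integration by parts in order to bound each Fourier coefficient sharply. First I would extend $\sigma$ smoothly to a symbol in $\mathcal{S}^{0}({\mathbf{R}})$ (which is harmless, since only the values of $\sigma$ on $\mathrm{supp}\,\varphi(\varepsilon\,\cdot)$ enter the original sum) and set $f(x):=e^{2\pi i\mu x}\varphi(\varepsilon x)\sigma(x)$. Poisson summation then gives $\sum_{k\in{\mathbf{Z}}}f(k)=\sum_{m\in{\mathbf{Z}}}\hat f(m)$, and after the rescaling $y=\varepsilon x$ one finds
\[
\hat f(m)=\frac{1}{\varepsilon}\int_{{\mathbf{R}}}e^{2\pi i(\mu-m)y/\varepsilon}\,\varphi(y)\,\sigma(y/\varepsilon)\,dy.
\]

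Next I would integrate by parts $L$ times against the linear phase $2\pi(\mu-m)y/\varepsilon$: each step produces a factor $\varepsilon/(2\pi|\mu-m|)$ and transfers one derivative onto the amplitude $\varphi(y)\sigma(y/\varepsilon)$. The crucial observation is that, although the chain rule generates an extra $\varepsilon^{-1}$ each time a $y$-derivative hits $\sigma(y/\varepsilon)$, the symbol bound $|\sigma^{(j)}(\xi)|\lesssim(1+|\xi|)^{-j}$ gives, on $\operatorname{supp}\varphi\subset[a,b]$,
\[
\bigl|\partial_y^{j}\sigma(y/\varepsilon)\bigr|=\varepsilon^{-j}\bigl|\sigma^{(j)}(y/\varepsilon)\bigr|\lesssim\varepsilon^{-j}(\varepsilon/a)^{j}=a^{-j},
\]
so the $\varepsilon^{-j}$ is exactly absorbed by the decay of the derivatives of $\sigma$. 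Leibniz's rule then shows that $\partial_y^{L}[\varphi(y)\sigma(y/\varepsilon)]$ is uniformly bounded by some $C_{L}$, yielding
\[
|\hat f(m)|\le C_{L}\,\frac{\varepsilon^{L-1}}{|\mu-m|^{L}}.
\]

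Finally, the non-resonance hypothesis forces $|\mu-m|\ge\delta$ for the (at most two) integers closest to $\mu$, while $|\mu-m|\ge 1/2$ for all the others; since $L>1$ the tail converges, so $\sum_{m\in{\mathbf{Z}}}|\mu-m|^{-L}\lesssim \delta^{-L}$, which gives $\bigl|\sum_{k}f(k)\bigr|\le C_{L}\varepsilon^{L-1}/\delta^{L}$. This bound trivially implies the claimed estimate, since it is always dominated by $C_{L}\max\{\varepsilon^{L-1}/\delta^{L},1\}$. I expect the main technical point to be the uniform control of $\partial_y^{L}[\varphi(y)\sigma(y/\varepsilon)]$: it rests entirely on $\sigma$ being a symbol of order zero, so that the decay of its derivatives cancels the $\varepsilon^{-1}$ factors produced by the chain rule. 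Without that structure the integration-by-parts would lose a factor $\varepsilon^{-L}$ and the estimate would collapse.
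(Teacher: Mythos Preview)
Your proof is correct and follows essentially the same route as the paper: Poisson summation followed by the key observation that $\partial_y^{L}[\varphi(y)\sigma(y/\varepsilon)]$ is uniformly bounded on $\operatorname{supp}\varphi\subset[a,b]$, because the symbol decay $|\sigma^{(j)}(y/\varepsilon)|\lesssim(\varepsilon/a)^{j}$ exactly cancels the factor $\varepsilon^{-j}$ produced by the chain rule. The only cosmetic difference is that the paper packages this same computation as a uniform bound on the Schwartz norms of $\varphi(\cdot)\sigma(\cdot/\varepsilon)$ and then invokes the rapid decay of its inverse Fourier transform (treating the case $\sigma\equiv1$ first as a warm-up), whereas you obtain the identical decay by integrating by parts $L$ times directly; the two arguments are equivalent.
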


\begin{proof}
Let $\psi$, $\tau$
be such that $\varphi=\widehat\psi$, $\sigma=\widehat\tau$.

We consider first the case $\sigma=1$.
By the classical Poisson summation formula
\begin{align}\label{stimapoisson-like-2}
\big| \sum_{k\in{\mathbf{Z}}} e^{2\pi i \mu k} \varphi (\varepsilon k)\big|
&=\big| \sum_{k\in{\mathbf{Z}}} e^{2\pi i\mu  k} \widehat{\psi_{\varepsilon}}(k)\big|
= \big|\sum_{k'\in{\mathbf{Z}}} {\psi_{\varepsilon}}(k'+\mu )\big| \notag\\
&\le \frac{C_L}{\varepsilon} \sum_{k'\in{\mathbf{Z}}} 
\frac{1}{\big(1+ \frac{|k'+\mu|}{\varepsilon}\big)^L}\, ,
\end{align}
where
${\psi_{\varepsilon}}(x)=\varepsilon^{-1}
\psi ({\varepsilon}^{-1}x)$.

We may assume 
$|\mu|\le \frac12$, so that $\operatorname{dist}(\mu,{\mathbf{Z}})=|\mu|\ge \delta$ and 
\begin{align*} 
\sum_{k'\in{\mathbf{Z}}}
\frac{1}{\big(1+ \frac{|k'+\mu|}{\varepsilon}\big)^L }
&\le
\frac{1}{\big(1+\frac{|\mu|}{\varepsilon} \big)^L} +2
\int_{-\infty}^{ +\infty}
\frac{1}{\big(1+\frac{|x+\mu|}{\varepsilon}\big)^L}\, dx\\
&\le
\frac{1}{\big(1+\frac{\delta}{\varepsilon}\big)^L} +2\varepsilon 
\int_{-\infty}^{ +\infty}
\frac{1}{\big(1+|y|\big)^L}\, dy \, .
  \end{align*}
Therefore,
\begin{align*}
\big| \sum_{k\in{\mathbf{Z}}} e^{2\pi i \mu k} \varphi (\varepsilon k)\big|
&\le \frac{C_L}{\varepsilon}
\Bigg( \frac{1}{\big(1+\frac{\delta}{\varepsilon} \big)^L}
+2\varepsilon \int_{ -\infty}^{ +\infty}
\frac{1}{\big(1+|y|\big)^L}\, dy\Bigg) \\
&\le
C_L \frac{\varepsilon^{L-1}}{\big(\varepsilon+\delta \big)^L} + C\\
&\le C_L\max
\bigg\{ \frac{\varepsilon^{L-1}}{ \delta^L} , 1\bigg\} 
\, ,
\end{align*}
proving \eqref{O(1)} in the case $\sigma=1$.

Next we suppose that 
$\sigma$ belongs to classical symbol class ${\mathcal{S}}^0$.
Notice that
\begin{equation*}
\varphi (\varepsilon k)\sigma (k)
=
\widehat{\psi_{\varepsilon}}
(k)\widehat{\tau}(k)
=
\big(
\psi_{\varepsilon}*\tau)^{\widehat{\ }}(k)
=
\big(\big(\psi*\tau_{1/\varepsilon}\big)_{\varepsilon}
\big) ^{\widehat{\ }}(k)\, .
\end{equation*}
Thus we may repeat the previous arguments as in
\eqref{stimapoisson-like-2} to obtain
\begin{align}
\big|
\sum_{k\in{\mathbf{Z}}}
e^{2\pi i \mu k}
\varphi (\varepsilon k)\sigma (k)\big|
&=\big|
\sum_{k\in{\mathbf{Z}}}
e^{2\pi i \mu k} 
\big(\big(\psi*\tau_{1/\varepsilon}\big)_{\varepsilon}
\big) ^{\widehat{\ }}(k)
 \big|\notag
\\
&=\big|
\sum_{k'\in{\mathbf{Z}}}
\big(\psi*\tau_{1/\varepsilon}\big)_{\varepsilon}
 (k'+\mu)\big|\notag\\
&\le
\frac{C_L}{\varepsilon} \sum_{k'\in{\mathbf{Z}}}
\frac{1}{\big( 1+\frac{|k'+\mu|}{\varepsilon}\big)^L}\,,\label{PoissonMacLaurin}
\end{align}
where $C_L$ does not depend on $\varepsilon$
as long as the Schwartz  norms of
$(\psi*\tau_{1/\varepsilon})$
are uniformly bounded in $\varepsilon$.
This happens if and only if
the Schwartz norms of
$\big(\psi*\tau_{1/\varepsilon} \big)^{\widehat{\ } }$
are uniformly bounded in $\varepsilon$, as $\varepsilon \to 0$.
Now
$$
\big(\psi*\tau_{1/\varepsilon}\big)^{\widehat{\ } }(x)
=  \widehat{\psi} (x)\widehat{\tau_{1/\varepsilon}} (x) 
= \varphi (x) \sigma (x/\varepsilon)\, .
$$
Since $\sigma\in{\mathcal{S}}^0$,  is straightforward to check that
\begin{align*}
\Big| D^j_x \big( \varphi\sigma(\cdot/\varepsilon)\big)(x) \Big|
& 
=
\Big| \sum_{j'=0}^j c_{j'} \varphi^{(j-j')}(x)\frac{1}{\varepsilon^{j'}}
\sigma^{(j')} (x/\varepsilon)\Big| \\
&  \le C
\sum_{j'=0}^{j} 
\big| \varphi^{(j-j')}(x)\big| \frac{1}{\varepsilon^{j'}} \frac{1}{(1+ |x|/\varepsilon)^{j'}}\\
& \le C
\sum_{j'=0}^{j} 
\big| \varphi^{(j-j')}(x)\big| \frac{1}{(\varepsilon^{j'}+a)^{j'}} \, ,
\end{align*}
since $\operatorname{supp}\varphi\subset[a,b]$ and $a>0$.
The statement now follows.
\end{proof}
\begin{remark}
It is worth noticing that, by choosing as a symbol
$\sigma$ a smooth
 cut-off function
with compact support, the 
estimate \eqref{O(1)}
may be proved also for truncated sums.
\medskip
\end{remark}

\begin{lemma}\label{symbol-Sigmap1}
For $N=k'+k$ and $\beta=|k'-k|$ set
$$
\sigma_1(k,k') 
= \eta_+(k'-k) \tilde\psi(k,k') \theta^{2p}Q_{2p} (\beta,\theta)
\frac{J_{n-1+p}(N \theta)}{(N\theta)^{n-1+p}} \chi_1(N\theta)\, .
$$
Then $\sigma_1$ is a symbol of order 0 in $k'$, depending on the
parameters $\theta$ and $k$, with norm uniformly bounded in such
parameters.
\end{lemma}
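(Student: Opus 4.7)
The plan is to verify the symbol estimate $|\partial_{k'}^j \sigma_1(k,k')|\le C_j(1+k')^{-j}$ directly, by bounding each of the five factors composing $\sigma_1$ and combining them via the Leibniz rule, with constants uniform in the parameters $k$ and $\theta$. The key preliminary observation, to be invoked throughout, is that $\chi_1(N\theta)$ localizes the support to $N\theta=(k+k')\theta\le 2$, whence
\[
\theta\le\frac{2}{k+k'}\le\frac{2}{k'}
\quad\text{and}\quad
\beta=|k'-k|\le k+k'\le\frac{2}{\theta}
\]
on $\operatorname{supp}\sigma_1$.

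I would first treat the two factors depending only on $N\theta$. The Bessel quotient $f(x):=J_{n-1+p}(x)/x^{n-1+p}$ is an entire even function of $x$, so it is smooth with all derivatives bounded on the compact set $[0,2]$; the same is clearly true of $\chi_1$. By the chain rule, each $\partial_{k'}$ applied to a function of $N\theta$ brings down a factor of $\theta$, giving
\[
\bigl|\partial_{k'}^j\bigl[\chi_1(N\theta)\,f(N\theta)\bigr]\bigr|\le C_j\theta^j\le C_j(k')^{-j}.
\]

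Next I would handle the polynomial factor $\theta^{2p}Q_{2p}(\beta,\theta)$. Using $\theta\beta\le 2$ one has $|\theta^{2p}Q_{2p}(\beta,\theta)|\le C$, and each differentiation in $k'$ either reduces by one the $\beta$-degree of $Q_{2p}$ or kills the factor, so that after $j$ derivatives the bound becomes $C\theta^j\le C(k')^{-j}$ (and vanishes for $j>2p$). The angular cut-off $\tilde\psi$ is a smooth function of the scale-invariant ratio $(k-n/2)/(k'-n/2)$, and on its support $k\sim k'$, so a direct computation yields $|\partial_{k'}^j\tilde\psi(k,k')|\le C_j(k')^{-j}$. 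Finally, $\eta_+(k'-k)$ is bounded by $1$ and its derivatives of order $j\ge 1$ are bounded with compact support in $k'-k$; combined with the localization $k\sim k'$ from $\tilde\psi$, this factor contributes at most $O((1+k')^{-j})$ on the joint support.

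Combining these factorwise estimates via the Leibniz rule would then yield the desired bound with constants $C_j$ uniform in $k$ and $\theta$. The main technical point requiring care is the interplay between the polynomial growth of $Q_{2p}$ in $\beta$ and the prefactor $\theta^{2p}$: these two compensate only by virtue of the support restriction $N\theta\le 2$ imposed by $\chi_1$, so this constraint must be tracked carefully through every Leibniz computation in order to obtain seminorms uniform in the parameters.
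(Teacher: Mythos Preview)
Your plan coincides with the paper's proof: use the localization $N\theta\le2$ imposed by $\chi_1$ to get $\theta\lesssim 1/k'$ on the support, then verify the order-$0$ symbol bounds factor by factor and assemble via Leibniz. Your handling of the Bessel quotient $J_{n-1+p}(N\theta)/(N\theta)^{n-1+p}$, of $\chi_1(N\theta)$, of $\theta^{2p}Q_{2p}(\beta,\theta)$, and of $\tilde\psi(k,k')$ is correct and matches the paper's argument essentially line for line.

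There is one point where your argument does not go through as written. For $\eta_+(k'-k)$ you assert that its $k'$-derivatives contribute $O((1+k')^{-j})$, invoking the localization $k\sim k'$ coming from $\tilde\psi$. But $\partial_{k'}^j\eta_+(k'-k)=\eta_+^{(j)}(k'-k)$ is merely $O(1)$ on its support $\{k'-k\in[1/4,1/2]\}$; there $k'\approx k$ can be arbitrarily large, so $(1+k')^{j}\cdot O(1)$ is unbounded in $k$, and for $p=0$ the remaining factor $\theta^{2p}Q_{2p}\equiv1$ supplies no compensating decay. In other words, $\eta_+(\cdot-k)$ is \emph{not} an $\mathcal S^0$ symbol with seminorms uniform in $k$, and the product inherits this defect. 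It is worth noting that the paper's own proof never discusses derivatives landing on $\eta_+$, so this subtlety is passed over there as well. One route to a rigorous repair is to drop $\eta_+$ from the symbol altogether---extending $Q_{2p}$ as a polynomial in the signed variable $\beta=k'-k$, which your remaining estimates (together with the supports of $\tilde\psi$ and $\chi_1$) show yields a uniform $\mathcal S^0$ symbol---and then account separately for the discrepancy on the finite range $n/2\le k'\le k$.
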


\begin{proof}
We wish to show that, considering $k'=\xi$ as a continuous parameter,
$\sigma_1$ is a smooth function of $\xi$ and, for each non-negative
integer $k$ there exists a positive constant $C=C_k$, independent of
$k$ and $\theta\in [0,\pi/2-\varepsilon_1]$, such that
$$
\big|\partial_\xi^k \sigma_1(\xi)\big|
\le C (1+|\xi|)^{-k}\, .
$$
Notice that since $k'\ge1$ we may assume that we have extended
$\sigma_1$ to be identically $0$ when $\xi\le1/2$.

Since the Bessel function $J_\nu$
of integral order $\nu$ is analytic and has a zero of order $\nu$ at
the origin, it is clear that $\sigma_1$ is smooth and bounded
uniformly in $\theta$.  Moreover, recall
from Lemma \ref{stimeFitohui}
that $Q_{2p}(\beta,\theta)$ is 
a polynomial of degree $2p$ in $\beta$ and  analytic in 
$\theta\in 
[0, \frac{\pi}{2})$.  Hence, since
$\chi_1(N\theta)=0$ for $N\theta\ge2$, we have that, on the support of
$\chi_1$,
$\theta\le 2/N\le
C/\xi$, implying  that 
$$
| \theta^{2p}Q_{2p}(\beta,\theta)| \le C\frac{|Q_{2p}(\beta,\theta)|}{N^{2p}}
$$
which
 is bounded,
as $\xi\to+\infty$, uniformly in $\theta$.

Next we consider the derivatives.  If the derivative falls on the
factor $\theta^{2p}Q_{2p}(\beta,\theta)$ we simply lower the degree of
the
polynomial of $\xi$ and then obtain the estimate
$$
\big| \partial_\xi \big[\theta^{2p}Q_{2p}(\beta,\theta)\big]\big| 
\le C\frac{1}{\xi}\, ,
$$
as $\xi\to+\infty$, uniformly in $\theta$, as we required.  

It the derivative falls on the factor $\frac{J_{n-1+p}(N
  \theta)}{(N\theta)^{n-1+p}}$, since the derivative of the Bessel
function of order $\nu$ satisfies the identity $J_\nu'(z)-\frac\nu{z} J_\nu(z)+J_{\nu-1}(z)$, again we obtain that
$$
\Big| \partial_\xi \Big[ \frac{J_{n-1+p}(N
  \theta)}{(N\theta)^{n-1+p}} \Big]\Big| 
\le C \big(\theta + \frac1N\big) \le C \frac{1}{\xi}\, ,
$$
as $\xi\to+\infty$, uniformly in $\theta$.

If the derivative falls on $\chi_1$, it produces an extra
factor $\theta$, which is less than $ C/\xi$.

Hence,
$$
\big|\partial_\xi \sigma_1(\xi)\big|
\le C (1+|\xi|)^{-1}\, ,
$$
as $|\xi|\to+\infty$, uniformly in $\theta$.

Finally, if the derivative falls on $\tilde\psi$ it produces a factor
of the order of $k/\xi^2$ which is less or equal to $C/|\xi|$, as $\xi\to+\infty$.

The argument can be repeated for all higher order derivatives, so the
lemma is proven.
\end{proof}

We wish to  apply Lemma \ref{Poisson}, and this leads us to analize
the phase function
$t kk' +\omega (k'-k)$.  Recall that $A$ has been
defined in \eqref{A-def} and that, as observed earlier, by passing to
the complex conjugate, we may assume $t>0$.
We then introduce the
 set of indeces in ${\mathbf{N}}^2$
\begin{equation*}\label{sV}
\mathcal V
=
\big\{ (k,k'):\, k,k'\ge n/2\, ,\
a'/h^2\le kk' \le b'/h^2\, ,\ 1/M\le k/k'\le
M \,,\, |k-k'|\ge1/2\big\} \, .
\end{equation*}
We set moreover 
\begin{equation}\label{sV-pm}
\mathcal V_+=\mathcal V\cap\{(k,k'):\, k'>k\}\qquad\text{and}\qquad 
\mathcal V_-=\mathcal V\cap\{(k,k'):\, k'<k\}\,.
\end{equation}
Finally, we introduce the  space of parameters $(t, \omega)$
\begin{equation*}
R:= [h^2,h^s]\times[0,2\pi) 
\subset A\times
[0,2\pi)\,.
\end{equation*}

%PRIMO LEMMA FASE
\begin{lemma}\label{phase-fnc}
On  $\mathcal V_+$ we set 
$\mu(k):= (tk+\omega)/2\pi$ and
$\mu'(k')=(\omega-tk')/2\pi$.
Then, there exist  a constant $0<\gamma<1$ and 
%finitely many 
two regions $R_I,R_{I\!I}$
%,R_{I\!I\!I}, R_{I\!V}$
 in the $(t,\omega)$-space,  such that
$R \subseteq\cup_{} R_I$ and 
for all $(t, \omega)\in R_i$,  $i\in  \{I,II\}$,
 one between the following two conditions
\begin{itemize}
\item[(I)]
$\operatorname{dist}(\mu(k),{\mathbf{Z}})\ge \gamma tk$ for all $(k,k')\in\mathcal V_+$;\smallskip
\item[(II)]
$\operatorname{dist}(\mu'(k'),{\mathbf{Z}})\ge\gamma tk'$ for all $(k,k')\in\mathcal V_+$
\end{itemize}
holds.

Analogous statement holds in the case of  $\mathcal V_-$.
%\footnote{La definition di $\mu (k)$ e $\mu (k')$ resta uguale.}
%, defining $\mu(k):= (tk-\omega)/2\pi$ and
%$\mu'(k')=(\omega+tk')/2\pi$ in this case.
\end{lemma}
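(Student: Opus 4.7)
The plan is to define $R_I$ and $R_{II}$ as the maximal subsets of $R$ on which conditions $(I)$ and $(II)$ hold, respectively, and to show by a short contradiction argument that $R = R_I \cup R_{II}$ for an appropriate choice of $\gamma$. The crux will be that $t(k+k')$ is uniformly small on the relevant parameter region, which is precisely where the constraint $s>1$ built into the time window (cf.\ Remark \ref{timeinterval}) enters.

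Concretely, I would set
\[
R_I := \big\{(t,\omega) \in R : \operatorname{dist}(\mu(k),{\mathbf{Z}}) \ge \gamma\, tk \text{ for all } (k,k')\in\mathcal V_+\big\},
\]
and $R_{II}$ analogously using $\mu'$ and $k'$. Suppose for contradiction that $(t,\omega) \in R$ lies in neither $R_I$ nor $R_{II}$; then there exist $k_1, k_2'$ (each a coordinate of a point of $\mathcal V_+$) and integers $m, m'$ with
\[
|tk_1 + \omega - 2\pi m| < 2\pi\gamma\, t k_1, \qquad |tk_2' - \omega + 2\pi m'| < 2\pi\gamma\, t k_2'.
\]
Adding these and using the triangle inequality gives
\[
|t(k_1 + k_2') - 2\pi(m - m')| < 2\pi\gamma\, t(k_1 + k_2').
\]

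At this point I would invoke Remark \ref{change-of-parameters}: the conditions $kk' \le b'/h^2$ and $k/k' \in [1,M]$ force $k_1, k_2' \le c'/h$, so
\[
t(k_1 + k_2') \le 2c c'\, h^{s-1}.
\]
Since $s > s_n \ge 4/3 > 1$, by choosing $\varepsilon_0$ small enough that $2cc'\varepsilon_0^{s-1} < \pi$ we obtain $t(k_1+k_2')/(2\pi) < 1/2$, so the nearest integer to $t(k_1+k_2')/(2\pi)$ is $0$; hence $m - m' = 0$ and the displayed inequality collapses to $1 < 2\pi\gamma$. Any $\gamma \in (0, 1/(2\pi))$ then delivers the contradiction.

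For $\mathcal V_-$ the argument is symmetric: swapping the roles of $k$ and $k'$ in the phase $tkk' + \omega(k'-k)$ amounts to a sign change in $\omega$, so one redefines $\mu$ and $\mu'$ accordingly and repeats the preceding steps verbatim. The main (and essentially only) obstacle is the uniform smallness estimate $t(k+k') < \pi$; this is precisely what the constraint $s > 1$ buys us, and it is sharp in the sense of Remark \ref{timeinterval}---without it the integer $m - m'$ could no longer be forced to vanish and the subtraction argument would break down.
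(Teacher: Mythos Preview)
Your argument is correct, and it is genuinely different from the paper's. The paper proceeds by explicit case analysis on $\omega$: it first sets $R_I=\{c_1<\omega<2\pi-c_1\}$ and checks directly that (I) holds there (because $\operatorname{dist}(\omega/2\pi,{\mathbf Z})$ dominates the small term $tk/2\pi$); then for $|\omega|\le c_1$ it observes that $\operatorname{dist}(\mu,{\mathbf Z})=|tk+\omega|/2\pi$ and $\operatorname{dist}(\mu',{\mathbf Z})=|tk'-\omega|/2\pi$, and a sign check on $\omega$ immediately gives (I) if $\omega\ge0$ and (II) if $\omega<0$. Your route---defining $R_I,R_{II}$ as the maximal sets where (I), (II) hold and arguing by contradiction via the addition trick $|t(k_1+k_2')-2\pi(m-m')|<2\pi\gamma\,t(k_1+k_2')$---is cleaner and more conceptual: it isolates exactly the single quantitative input needed (namely $t(k+k')<\pi$, i.e.\ $s>1$) without splitting into subcases. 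The paper's constructive approach, on the other hand, yields explicit regions tied to the size of $\omega$, which mirrors the finer case analysis carried out later in Lemma~\ref{phase-fnc-2}; but since the only use of the present lemma is the dichotomy (I)/(II) in Proposition~\ref{estimate-Sigmap1}, your version suffices there.
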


\begin{proof}
We begin by observing that $(k,k')\in\mathcal V_+$ implies that
$1\le k< k'\le \lfloor c'/h\rfloor$,
so that, for $t\in [h^2,h^s]$ we have
\begin{equation}\label{stima-tk}
\frac{h^2}{2\pi}\le \frac{1}{2\pi} tk\le \frac{1}{2\pi} tk'\le c h^{s-1}\le c_1/(4\pi)\, ,
\end{equation}
if $h\le\varepsilon_0$ is sufficiently small and  for some positive, small enough  $c_1$.

Let $R_I=\{(t,\omega)\in [h^2,h^s] \times [0,2\pi):\, 0<c_1<\omega<2\pi-c_1\}$,
and $c_1$ as above. 
Then (I) holds for $(t,\omega)\in R_I$
since for $m$ integer
$$
 \big| \frac{ tk+\omega }{2\pi}-m\big|
\ge \big|\frac{\omega}{2\pi}-m\big| - \frac{c_1}{4\pi}\ge
\frac{c_1}{4\pi}\, .
$$

Replacing $\omega$ by $2\pi-\omega$ we may assume now that
$-c_1<\omega<c_1$.  In this case, 
notice that
$\operatorname{dist}(\mu,{\mathbf{Z}})=\frac{1}{2\pi}|tk+\omega|$ and
 $\operatorname{dist}(\mu',{\mathbf{Z}})=\frac{1}{2\pi}|tk'-\omega|$.
 
Then, if $\omega>0$, 
 we have that  $\frac{tk+\omega}{tk}\ge 1$, so that in this case (I) holds.

If $\omega<0$, 
 we then have that
 $\frac{tk'-\omega}{tk'}\ge 1$, so that in this case (II) holds.
\end{proof}

\begin{proposition}\label{estimate-Sigmap1}
There exists a constant $C>0$ such that
$$
\big| K_p^{1,+}(\omega,\theta)\big|
\le C \frac{1}{|t|^{n+1}}\, ,
$$
uniformly in $\omega$ and $\theta\in[0,\varepsilon_1]$, for all 
$|t|\in [h^2, ch^s]$. 
\end{proposition}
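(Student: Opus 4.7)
The plan is to apply Lemma \ref{Poisson} to the inner sum in $k'$ (exploiting the non-resonance furnished by Lemma \ref{phase-fnc}) and then to sum over $k$. By passing to the complex conjugate we may assume $t>0$, and by Lemma \ref{phase-fnc} we may reduce to the case $(t,\omega)\in R_I$, so that $\operatorname{dist}(\mu(k),{\mathbf{Z}})\ge \gamma tk$ for all $(k,k')\in\mathcal V_+$; for $(t,\omega)\in R_{II}$ the roles of $k$ and $k'$ are swapped and one uses $\mu'(k')$ in place of $\mu(k)$.

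To bring the inner sum into the form required by Lemma \ref{Poisson}, we expand the polynomial factor as $Ng_{n-1}(k')=\sum_{j=0}^n a_j(k)(k')^j$, with $|a_n|$ bounded and $|a_j(k)|\le C(1+k)$ for $j<n$, and we absorb each monomial into the cutoff via
$$
(k')^j\,\varphi(h^2 kk')=(h^2 k)^{-j}\,\tilde\varphi_j(h^2 kk'),\qquad \tilde\varphi_j(x):=x^j\varphi(x)\in C_c^\infty([a,b]).
$$
Since by Lemma \ref{symbol-Sigmap1} the residual factor $\sigma_1(k,\cdot)$ is a symbol of order $0$ uniformly in $k$ and $\theta$, Lemma \ref{Poisson} applied with $2\pi\mu=tk+\omega$, $\varepsilon=h^2 k$, and $\delta=\gamma tk$ gives, for every $L\ge 1$,
$$
\Big|\sum_{k'}e^{i(tk+\omega)k'}(k')^j\varphi(h^2 kk')\sigma_1(k,k')\Big|\le C_L\,(h^2 k)^{-j}\,\max\Big\{\frac{(h^2 k)^{L-1}}{(\gamma tk)^L},\,1\Big\}.
$$

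We then multiply by the remaining weight $|a_j(k)\tilde g_{n-1}(k)|\le C k^n$ and sum over $k\in[n/2,c'/h]$. Choosing $L=n+2$, the oscillatory part of the maximum contributes, after the $k$-summation, at most a constant multiple of $h^{2L-j-n-2}/t^{L}$, which is bounded by $1/t^{n+1}$ for every $0\le j\le n$ because $h^2\le t$. The non-oscillatory ``$1$'' contributes at most $Ch^{-(n+j+1)}$ for $j<n$ and $Ch^{-2n}\log(1/h)$ for $j=n$; the constraint $t\le ch^s$ together with $s\ge s_n=2n/(n+1)$ (and the extra room available when $n=1$, where $s_1=4/3>1$) forces all these terms to be $\le C/t^{n+1}$ as well.

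The main delicacy lies in the interaction of the polynomial weights $Ng_{n-1}(k')$ and $\tilde g_{n-1}(k)$ with the oscillatory cancellation in Lemma \ref{Poisson}: absorbing each power of $k'$ into $\tilde\varphi_j$ recovers the oscillatory decay but costs a factor $(h^2 k)^{-1}$, and these factors must be paid back by the outer sum in $k$ without destroying the $1/t^{n+1}$ decay. A subtler point is that the non-oscillatory ``$1$'' in the maximum of Lemma \ref{Poisson} governs the regime where $h^2$ and $t$ are of the same order, and it is precisely this piece that pins down the threshold $s\ge s_n$.
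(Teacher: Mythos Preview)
Your approach is essentially the paper's: absorb the polynomial factors in $k'$ into the cutoff, apply Lemma~\ref{Poisson} to the inner sum using the non-resonance from Lemma~\ref{phase-fnc}, then sum in $k$; the paper just packages this by keeping only the leading term $Nk^{n-1}k'^{n-1}$ and taking $L=n+1$, but arrives at the same bound $C/t^{n+1}+Ch^{-2n}\log(1/h)$. Two small corrections: your formula $h^{2L-j-n-2}/t^{L}$ for the oscillatory piece at $j=n$ only comes out right with the sharper weight $|a_n\tilde g_{n-1}(k)|\le Ck^{n-1}$ (which you in fact use when computing $h^{-2n}\log(1/h)$ in the non-oscillatory piece), and that logarithm forces the \emph{strict} inequality $s>s_n=2n/(n+1)$ for $n>1$, not $s\ge s_n$.
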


\begin{proof}
Recall the definition of $K_p^{1,\pm} (\omega,\theta)$
introduced in 
\eqref{sommaspezzata}.

Using Lemma \ref{symbol-Sigmap1} we have that
$$
K_p^{1,+}(\omega,\theta)
= 
\sum_ {k,k'\ge 1 }
e^{i[tkk' +\omega |k'-k|]} \varphi (h^2
kk')
N g_{n-1} (k')  \tilde g_{n-1}(k) \sigma_1(k')\\
\, .
$$

Notice that
$ g_{n-1} (k') \tilde g_{n-1}(k) 
 = k^{n-1}{k'}^{n-1} +$ $\text{lower order terms}$.
Thus, we  estimate the higher order terms, the other ones being estimated
in a similar way, giving rise to a better bound.  Hence, it suffices
to estimate
\begin{equation}\label{suffices}
\Big| 
 \sum_ {k,k'\ge n/2 }
 e^{i[tkk' +\omega (k'-k)]} \varphi (h^2 kk')
  Nk^{n-1}{k'}^{n-1} \sigma_1(k') \Big|\, ,
\end{equation}
and we distiguish two cases according that condition (I) or (II) in
Lemma \ref{phase-fnc} holds, respectively.

\medskip

\noindent
{\bf Case (I).}
We assume that 
$\operatorname{dist}(\mu(k),{\mathbf{Z}})\ge \gamma tk$ for all $(k,k')\in\mathcal V_+$ and
for some
$0<\gamma<1$,
so that
$|tk+\omega|\ge 2\pi  \gamma tk$ for all $k\ge1$.
Starting from \eqref{suffices}, we wish to estimate
\begin{align}
& \Big| 
 \sum_ {k=n/2 }^{\lfloor c'/h \rfloor}
\sum_{k'=  n/2}^{\lfloor c'/h \rfloor}
 e^{i[tkk' +\omega (k'-k)]} \varphi (h^2 kk') 
N k^{n-1}{k'}^{n-1}  \sigma_1(k,k') \Big| \notag\\
& \le \frac{C}{h^{2n}} 
 \sum_ {k=n/2 }^{\lfloor c'/h \rfloor} \frac1k 
\Big| 
\sum_{k'= n/2}^{{\lfloor c'/h \rfloor}}  
 e^{ik' [tk+\omega]} \varphi (h^2 kk')
  \big(h^2k{k'}\big) ^n
\sigma_1(k,k') \Big| \notag\\
& \qquad\qquad +
 \frac{C}{h^{2n-2}} \sum_ {k=n/2 }^{\lfloor c'/h \rfloor}
k \Big| 
\sum_{k'=  n/2}^{\lfloor c'/h\rfloor}  
 e^{ik'[tk+\omega]} \varphi (h^2 kk')
 \big(h^2k{k'}\big) ^{n-1} \sigma_1(k,k') \Big| \, . \label{above}
\end{align}
Notice that 
%since $k'\ge k$ 
it suffices to bound the first sum on the right hand side
of \eqref{above} above.

We now apply Lemma \ref{Poisson} to the inner sum of the first term 
 on the right hand
side of \eqref{above} above, with 
$$
\mu =[tk+\omega]/2\pi\, , \qquad
\delta =\gamma t k \ge 
\gamma h^2k=\varepsilon\, ,
$$ 
and
cut-off function $x\mapsto x^n \varphi$ and 
with the aid of Lemma \ref{symbol-Sigmap1}.

Hence we obtain that for every $L>1$  the
right hand side of \eqref{above} is less or equal to a constant times
\begin{align*}
\frac{C_L}{h^{2n}}  \sum_ {k=n/2 }^{\lfloor c'/h \rfloor}
\frac1k 
\max \bigg( \frac{\varepsilon^{L-1}}{\delta^L} ,1 \bigg)
& \le  \frac{C}{h^{2n}}  \sum_ {k=n/2 }^{\lfloor c'/h \rfloor}
\frac1k 
\bigg( \frac{(h^2k)^{(L-1)}}{(t k)^L} +1 \bigg) \\
& \le  \frac{C}{h^{2n}}  \bigg( \frac{h^{2(L-1)}}{t^L}
\sum_ {k=n/2 }^{\lfloor c'/h \rfloor} \frac1{k^{2}} +\log(1/h)\bigg) \\
& \le C\bigg(\frac{1}{t^{n+1}}+\frac{1}{h^{2(n+\kappa)}}\bigg)
\, ,
\end{align*}
for any given $\kappa>0$,
if we choose $L=n+1$. 

Therefore, for $t$ such that $h^2\le t\le  h^s$
for
every $s>s_n$, we have
$$
\frac{1}{h^{2(n+\kappa)}}\le  \frac{C}{t^{2(n+\kappa)/s}} \le
\frac{C}{t^{n+1}}\, .
$$
Hence, for all $t\in A$ we obtain
\begin{equation}\label{end-case-1}
\big| K_p^{1,+}(\omega,\theta)\big|
\le C \frac{1}{|t|^{n+1}} \, .
\end{equation}
This proves the statement in Case (I).
\medskip

\noindent %CASO (2)
{\bf Case (II).}
We now assume 
that
$\operatorname{dist}(\mu'(k'),{\mathbf{Z}})\ge\gamma tk'$ for all $(k,k')\in\mathcal V_+$
and for some
$0<\gamma'<1$, so that
$|tk'-\omega|\ge 2\pi(h^2k')^{\gamma'}$, 
all $k'\in{\mathbf{N}}$.
In this case, again we start from \eqref{suffices}, apply Lemma
\ref{Poisson} to the inner sum with
$$
\mu =[tk'-\omega]/2\pi\, , \qquad
\delta =\gamma tk'  \ge 
\gamma h^2k'=\varepsilon\, ,
$$ 
and
cut-off function $x^n \varphi$ and 
with the aid of Lemma \ref{symbol-Sigmap1}.

 We have that
\begin{align}\label{sommascambiata}
& \Big| 
\sum_ {k,k'\ge n/2}
 e^{i[tkk' +\omega (k'-k)]} \varphi (h^2 kk')
Nk^{n-1} {k'}^{n-1}  \sigma_1(k,k') \Big|\notag \\
& \le \frac{C}{h^{2n}} 
 \sum_ {k'=n/2 }^{\lfloor c'/h \rfloor}  \frac1{k'} 
\Big| 
\sum_ {k=n/2 }^{\lfloor c'/h \rfloor}
 e^{ik [tk'-\omega]} \varphi (h^2 kk')
 \big(h^2k{k'}\big)^n  \sigma_1(k,k')\Big| \notag\\
& \qquad+
 \frac{C}{h^{2n-2}} \sum_ {k'=n/2 }^{\lfloor c'/h \rfloor}
k' \Big| 
\sum_ {k=n/2 }^{\lfloor c'/h \rfloor}   
e^{ik [tk'-\omega]}\varphi (h^2 kk')
 \big(h^2k{k'}\big) ^{n-1} \sigma_1(k,k')  \Big|\,.\end{align}
As in the previous case we may limit ourselves to consider the first sum 
in \eqref{sommascambiata}, the estimate for the latter one being analogous.
We have
\begin{align*}
& \frac{1}{h^{2n}} \sum_ {k'=1 }^{\lfloor c'/h \rfloor} \frac1{k'} 
\Big| 
\sum_ {k=1 }^{\lfloor c'/h \rfloor}
 e^{ik [tk'-\omega]} \varphi (h^2 kk')
 \big(h^2k{k'}\big)^n  \sigma_1(k,k') \Big| \notag\\
& \qquad\le \frac{C}{h^{2n}}    \sum_ {k'=1 }^{\lfloor c'/h \rfloor+1}
\frac1{k'}  
\bigg( \frac{(h^2k')^{(L-1)}}{(tk')^{L}} +1 \bigg) \\
& \qquad\le\frac{C}{h^{2n}}  \bigg( \frac{h^{2(L-1)}}{t^L}
 \sum_ {k'=1 }^{\lfloor c'/h \rfloor} \frac1{{k'}^2} +\log(1/h)\bigg) \\
& \qquad\le C\bigg(\frac{1}{t^{n+1}}+\frac{1}{h^{2(n+\kappa)}}\bigg)
\, ,
\end{align*}
 choosing $L=n+1$, for any $\kappa>0$.

Arguing as in \eqref{end-case-1} in Case (I), for 
$h^2\le |t|\le h^s$ we obtain 
\begin{equation}\label{end-case-2}
\big|K_p^{1,+}(\omega,\theta)\big|
 \le C \frac{1}{|t|^{n+1}}\, .
\end{equation}
The result now follows. \medskip
\end{proof}

\noindent {\bf{Step 5.}}
Finally, we wish to estimate
 the
modulus of $K_2^\pm(\omega,\theta)$, as defined in \eqref{Lj}.
Again, we consider only the case of $K_2^+$.
 
In this case, it turns out that it suffices to
take $m=0$.  
We introduce the same spectral decomposition as in 
\eqref{third-spe-dec}.
Thus, we are led to consider
the remainder term 
\begin{align*}
 \big|K_\mathcal R^{2,+} (\omega,\theta)\big|
 & = \Big|
\sum_ {k,k'\ge n/2 }
 e^{i[tkk' +\omega (k'-k)]} \varphi (h^2 kk')
 \tilde\psi(k,k')\eta_+ (k'-k) \chi_2(N\theta) \\
&\qquad\qquad\qquad\qquad \times 
N g_{n-1} (k') \tilde g_{n-1}(k) \theta^{m-n+2}\mathcal R_{m,N}(\theta) \Big| \\
&
\le \frac{C}{h^{2n-2}}
\sum_ {k=n/2 }^{\lfloor c'/h \rfloor}
\sum_{k'=n/2 }^{\lfloor c'/h \rfloor}
\Big|
 \varphi (h^2 kk') (h^2 kk')^{n-1} 
 \frac{\theta^{m-n+2}}{N^{n+m-1/2}} \Big|  \\
&
\le
\frac{C}{h^{2n-2}}\sum_ {k=n/2 }^{\lfloor c'/h \rfloor} 
\sum_{k'=n/2 }^{\lfloor c'/h \rfloor}
\frac{\theta^{m+3/2}}{(N\theta)^{n-1/2}N^m}
\, ,
\end{align*}
\medskip
so that, by choosing $m=0$, we obtain
\begin{equation}
\label{est-SigmaR-2}
\big|K_\mathcal R^{2,+} (\omega,\theta)\big|
\le \frac{C}{h^{2n}}
\end{equation}
for all
$n\ge 1$,
uniformly in $\omega$ and $\theta\in[0,\varepsilon_1]$.

Thus, for $j=2$
we are led to consider the main term in
\eqref{Lj} , that is, 
\begin{align}\label{decomp-ypsilon}
 \Upsilon^+(\omega,\theta)
&
:=\sum_ {k,k'\ge n/2 }
 e^{i[tkk' +\omega |k'-k|]} \varphi (h^2 kk')
 \tilde\psi(k,k')\eta_+(k'-k) \chi_2(N\theta) \notag\\
 &\qquad\qquad\qquad\qquad\qquad\qquad\qquad\times
N g_{n-1} (k')  \tilde g_{n-1}(k) \frac{J_{n-1}(N \theta)}{(N\theta)^{n-1}} \, .
\end{align}

Recall that in this case  we have that $N\theta\ge1$.
Then we use the asymptotic expansion of the 
Bessel function $J_\nu$
\begin{equation*}\label{Bessel-expansion}
J_\nu(x) 
=\frac{1}{x^{1/2}} \rho_1(x) e^{ix}  +
\frac{1}{x^{1/2}} \rho_2(x) e^{-ix} +\mathcal O (x^{-3/2})\, ,
\end{equation*}
for some bounded functions $\rho_j$, 
and write $\Upsilon^+(\omega,\theta)=\Upsilon_1+\Upsilon_2+\Upsilon_3$,
where, for $j=1,2$
\begin{multline}
\Upsilon_j (\omega,\theta)
=   
\sum_ {k,k'\ge n/2 }
e^{i[tkk' +\omega (k'-k)\pm\theta(k+k')]} \varphi
(h^2 kk') \tilde\psi(k,k') \eta_+(k'-k) \\
\times
N g_{n-1} (k')  \tilde g_{n-1}(k) 
\frac{1}{(N\theta)^{n-1/2}} 
\rho_j (N\theta) \chi_2(N \theta) \,
\label{Bessel}
\end{multline}
and
\begin{multline} \notag
\Upsilon_3(\omega,\theta)
= 
\sum_ {k,k'\ge 1 }
e^{i[tkk' +\omega (k'-k)]} \varphi
(h^2 kk') \tilde\psi(k,k') \eta_+(k'-k) \\
\times  N g_{n-1} (k')  \tilde g_{n-1}(k) 
\frac{1}{(N\theta)^{n-1}}  \chi_2(N
\theta)  \mathcal O \big((N\theta)^{-3/2}\big)\,.
\end{multline}
\begin{lemma}\label{symbol-Sigmap21}
For $N=k'+k$,  and $\beta=k'-k$, set
$$
\sigma_2(k,k') =
 \tilde\psi(k,k') \eta_+(k'-k) 
\frac{1}{(N\theta)^{n-\frac12}}
\chi_2(N\theta) R (N\theta)\, ,
$$
where 
\begin{equation}\label{definizioneRe}
R (N\theta)=
\begin{cases}
\rho_j (N\theta)& \text{  for $\Upsilon_j$\,, \,$j=1,2\,,$}\\
(N\theta)^{1/2}\,
\mathcal O \big( (N\theta)^{-3/2}\big) &\text{ for $\Upsilon_3$\,.}
\end{cases}
\end{equation}

Then $\sigma_2$ is a symbol of order 0 in $k'$ ($k$ resp.), depending on the
parameters $\theta$ and $k$ ($k'$ resp.), with norm uniformly bounded in such
parameters. 
\end{lemma}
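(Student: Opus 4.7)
The plan is to follow the same strategy as in Lemma \ref{symbol-Sigmap1}: extend $\sigma_2$ by zero for $\xi \le n/2$, treat $k' = \xi$ as a continuous real variable, and verify that $\sigma_2$ is smooth in $\xi$ with $|\partial_\xi^j \sigma_2(\xi)| \le C_j(1+|\xi|)^{-j}$ uniformly in $k$ and $\theta \in [0,\pi/2-\varepsilon_1]$. The case of $k$ as the independent variable follows by symmetry. The crucial structural difference from the previous lemma is that the cut-off $\chi_2(N\theta)$ now localizes where $N\theta \ge 1$, rather than where $N\theta \le 2$, so the bookkeeping of derivatives is somewhat dual.

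First I would record two basic observations on the support. On the support of $\tilde\psi(k,k')\eta_+(k'-k)$ one has $k \le k' \le Mk$, so $N = k+k' \approx k' = \xi$ as $\xi \to +\infty$. On the support of $\chi_2(N\theta)$ one has $N\theta \ge 1$, hence $\theta \ge 1/N$; combined with $\theta \le \pi/2 -\varepsilon_1$, this gives the two key estimates
\begin{equation*}
(N\theta)^{-\alpha} \le C_\alpha \quad(\alpha\ge 0),\qquad \theta\,(N\theta)^{-\alpha-1} = \theta^{-\alpha}N^{-\alpha-1}\le N^{-\alpha-1+\alpha}=N^{-1}\quad(\alpha\ge 0).
\end{equation*}
The first yields boundedness of each factor in $\sigma_2$, since $R(N\theta)$ is bounded both in the case $R=\rho_j$ (classical Bessel asymptotics) and in the remainder case, where $R(N\theta)=\mathcal O\big((N\theta)^{-1}\big)$.

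Next, I would bound $\partial_\xi \sigma_2$ via Leibniz. The derivative of $(N\theta)^{-(n-1/2)}$ equals $-(n-\tfrac12)\theta(N\theta)^{-(n+1/2)}$, which by the second estimate above is $O(1/N)=O(1/\xi)$. The derivative of $\chi_2(N\theta)$ equals $\theta\chi_2'(N\theta)$ and is supported where $1\le N\theta\le 2$, on which $\theta \le 2/N = O(1/\xi)$. The derivative of $R(N\theta)=\rho_j(N\theta)$ is $\theta\rho_j'(N\theta)$; using $\rho_j'(x) = O(1/x)$ (which follows from the standard asymptotic series of the Bessel functions), this is $O(\theta/(N\theta)) = O(1/N)$. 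The same bookkeeping handles the $\mathcal O((N\theta)^{-1})$ remainder. The derivative of $\tilde\psi$ produces a factor of order $k/(k')^2 = O(1/\xi)$ as in the previous lemma. Finally, $\eta_+(\xi-k)$ is locally constant (equal to $1$) on $\xi - k\ge 1/2$, so its derivative in $\xi$ is supported on the bounded set $\xi-k\in[1/4,1/2]$ and contributes only a piece with compact support in $\xi$, which is absorbed into the constant. Summing these contributions gives $|\partial_\xi \sigma_2| \le C(1+|\xi|)^{-1}$.

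Higher-order derivatives follow by induction using the Leibniz rule: each additional $\partial_\xi$ either falls on a factor to which the estimates above apply, or produces a new derivative of $\chi_2$, $\rho_j$, or $(N\theta)^{-(n-1/2)}$, each again gaining a factor $1/\xi$ by the same combination of the two key estimates. The uniformity in $\theta\in[0,\pi/2-\varepsilon_1]$ is automatic because the Bessel asymptotic coefficients, together with the remainder, have bounded Schwartz seminorms on $N\theta \ge 1$, uniformly in $\theta$ in that range. The main thing to watch is the interplay between $\theta\ge 1/N$ and negative powers of $N\theta$: this is what converts the factor $\theta$ produced by each $\xi$-derivative into genuine $1/\xi$ decay, and is the only place where the assumption $n\ge 1$ is used.
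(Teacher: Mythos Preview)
Your argument is correct and follows essentially the same approach as the paper's: verify the symbol estimates factor by factor, using $N\theta\ge1$ on $\operatorname{supp}\chi_2$ so that each $\theta$ arising from a $\xi$-derivative converts into a gain of $1/N\approx1/\xi$, and invoke the standard symbol behavior of the Bessel amplitudes $\rho_j$ (the paper cites Stein's formula for this). Your packaging via the two ``key estimates'' at the outset is slightly cleaner than the paper's case-by-case treatment, but the content is the same; the paper in fact does not separately discuss the $\eta_+$ factor at all.
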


\begin{proof}
As in Lemma \ref{symbol-Sigmap1}
we wish to show that, considering $k'=\xi$ as a continuous parameter,
$\sigma_2$ is a smooth function of $\xi$ and, for each non-negative
integer $m$ there exists a positive constant $C=C_m$, independent of
$k$ and $\theta\in (0,\varepsilon_1)$, such that
$$
\big|\partial_\xi^m \sigma_2(\xi)\big|
\le C (1+|\xi|)^{-m}\, ,
$$
and we may assume that we have extended
$\sigma_2$ to be identically $0$ when $\xi\le1/2$.

Since
$\chi_2(N\theta)=0$ for $N\theta\le1$
it follows that  $\sigma_2$ is smooth and
bounded as $\xi\to+\infty$, uniformly in $\theta$ and $k$.

Next we consider the derivatives.  
It the derivative falls on the factor
$\frac{ 1}{(N\theta)^{n-\frac12}} $ (or on 
$\frac{ 1}{(N\theta)^{n-1}} $ in the case of $\Upsilon_\mathcal R$), 
using the condition
$N\theta\ge1$
 we easily obtain,respectively,
that
$$
\Big| \partial_\xi 
\Big[ \frac{1}{(N\theta)^{n- \frac12}} \Big]\chi_2(N\theta)\tilde\psi(k,k')\eta_+(k'-k)\Big| 
\le C  \frac1N \le C \frac{1}{\xi}\, ,
$$
and
$$
\Big| \partial_\xi 
\Big[ \frac{1}{(N\theta)^{n- 1}} \Big]\chi_2(N\theta)\tilde\psi(k,k')\eta_+(k'-k)R (N\theta)\Big| 
\le C  \frac1N \le C \frac{1}{\xi}\, ,
$$
as $\xi\to+\infty$, uniformly in $\theta$.

If the derivative falls on $\rho_j (N\theta)$, by means of formula (15) in 
%\cite[Ch. VIII, 1.4.1]{Stein}
\cite[p.~338]{Stein}
we observe that
$$\Big| \partial_\xi 
\Big[ \sum_{k}a_{k,j}  (\xi \theta)^{-k} 
 \Big]
\tilde\psi(k,k')\eta_+(k'-k)
\frac{1}{(N\theta)^{n-\frac12}}
\chi_2(N\theta) 
\Big| 
 \le C \frac{1}{\xi}\, ,
$$
where $a_{k,j}$, $j=1,2$, are suitable coefficients. 

If the derivative falls on $\chi_2$, we notice that
$\chi_2'(\xi)=0$ unless $1\le\xi\le2$, so that
$$
\Big| \frac{1}{(N\theta)^{n-\frac12}} 
\partial_\xi 
\big[\chi_2(N\theta)\big] \tilde\psi(k,k')\eta_+(k'-k)\Big| 
\le C  \theta \le C\frac1N \le C \frac{1}{\xi}\, ,
$$
and the same is true when $R (N\theta)$ is defined as in the latter
case of 
\eqref{definizioneRe}.

Finally, if the derivative falls on the remainder term
$\mathcal O \big( (N\theta)^{-3/2}\big)$
in the latter case of 
\eqref{definizioneRe},
we have
$$
\Big| 
 \frac{ 1}{(N\theta)^{n- 1}} \chi_2(N\theta)\tilde\psi(k,k')\eta_+(k'-k)
 \partial_\xi \Big[\mathcal O \big( (N\theta)^{-3/2}\big)\Big]\Big| 
\le 
C  \frac1N \le C \frac{1}{\xi}\, .
$$

Hence,
$$
\big|\partial_\xi \sigma_2(\xi)\big|
\le C (1+|\xi|)^{-1}\, ,
$$
as $|\xi|\to+\infty$, uniformly in $\theta$.

The argument can be repeated for all higher order derivatives.
In particular, when the derivatives involve the term
$\mathcal O \big( (N\theta)^{-3/2}\big)$, 
we may  use formula $(8)$, p. 334 in \cite{Stein}, proving that 
this term behaves like a symbol of the expected order. Thus the
lemma is proven.
\end{proof}
\medskip

In the case of $\Upsilon^+$ we still need to use the oscillation of
the phase and hence Lemma \ref{Poisson}.  Let $R_I,R_{I\!I}$ be as in
Lemma \ref{phase-fnc}.  Since the phase in this case is
$tkk' +\omega(k'-k)\pm\theta(k+k')$, and $\theta>0$, we write
$\tilde\theta=\pm\theta$ and let $|\tilde\theta|$ vary in
$[1/N,\varepsilon_1]$. 
We then introduce the  space of parameters $(t, \omega, \theta)$
\begin{equation*}
R_\theta := \big\{
(t,\omega,\tilde\theta)  \in [h^2,h^s]\times [0,2\pi) \times (-\varepsilon_1,\varepsilon_1):\,
|\tilde\theta|<tN/M_1 \text{ or } |\tilde\theta|>M_1 Nt \big\}\, ,
\end{equation*}
where $M_1>2(1+M)$ is a large constant.

\begin{lemma}\label{phase-fnc-2}
Let $\theta$ be such that $1\le N|\tilde\theta|$.  
Let $\mathcal V_+$ be defined as in  \eqref{sV-pm}.
For $(k,k')\in \mathcal V_+$, set 
 $\mu_2= (tk+\omega+\tilde\theta)/2\pi$ and
$\mu_2'=(tk'-\omega+\tilde\theta)/2\pi$.
Then, there exist a constant 
$\gamma>0$ and finitely many regions in $R_\theta$, such that for all $(t,\omega,\tilde\theta)$ belonging to one of these regions,
at least one between the following two conditions
\begin{itemize}
\item[(III)]
$\operatorname{dist}(\mu_2,{\mathbf{Z}})\ge \gamma tk$ for all $(k,k')\in\mathcal V_+$,\smallskip
\item[(IV)]
$\operatorname{dist}(\mu_2',{\mathbf{Z}})\ge \gamma tk'$ for all $(k,k')\in\mathcal V_+$
\end{itemize}
holds.
\end{lemma}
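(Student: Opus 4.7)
The proof adapts the strategy of Lemma \ref{phase-fnc} to accommodate the extra contribution $\tilde\theta(k+k')$ in the phase. The idea is still to exhibit, for each $(t,\omega,\tilde\theta)\in R_\theta$, a direction (either $k$ or $k'$) along which the effective frequency $\mu_2$ or $\mu_2'$ stays at distance $\gtrsim tk$ (resp.\ $tk'$) from the integers. First I would use the choice $M_1>2(1+M)$ together with the constraint $(k,k')\in\mathcal V_+$ (which gives $k\le N\le (1+M)k$ and $N\le 2k'$) to translate the two alternatives in the definition of $R_\theta$ into pointwise bounds uniform in $(k,k')\in\mathcal V_+$: in \emph{Case A}, $|\tilde\theta|<tN/M_1$ forces $|\tilde\theta|\le\frac12\min(tk,tk')$; in \emph{Case B}, $|\tilde\theta|>M_1 tN$ forces $|\tilde\theta|\ge M_1 tk'\ge M_1 tk$.

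In Case A the $\tilde\theta$-contribution is a negligible perturbation of $tk$ and $tk'$, so I would partition by $\omega$ exactly as in Lemma \ref{phase-fnc}. For $\omega\in[c_1,2\pi-c_1]$ (with $c_1$ as in that lemma), $\operatorname{dist}(\omega/(2\pi),{\mathbf{Z}})\ge c_1/(4\pi)$ dominates the $(tk+\tilde\theta)/(2\pi)$ correction for $h$ small, so (III) follows verbatim from Case (I) of Lemma \ref{phase-fnc}. For $\omega\in[0,c_1)$, the inequality $tk+\omega+\tilde\theta\ge tk-|\tilde\theta|\ge tk/2$ together with an upper bound keeping $\mu_2<1/2$ yields $\operatorname{dist}(\mu_2,{\mathbf{Z}})=\mu_2\ge tk/(4\pi)$, hence (III). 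Writing $\omega=2\pi-\omega'$ with $\omega'\in(0,c_1)$ reduces the symmetric range $\omega\in(2\pi-c_1,2\pi)$ to the analogous positivity of $\mu_2'+1=(tk'+\omega'+\tilde\theta)/(2\pi)\ge tk'/(4\pi)$, which gives (IV).

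In Case B, $\tilde\theta$ dominates $tk$ and $tk'$. Here I would fix a small constant $c_2>0$ and split into two sub-situations. If at least one of $\omega+\tilde\theta$ or $-\omega+\tilde\theta$ lies at distance $\ge c_2$ from $2\pi{\mathbf{Z}}$, then, for $h$ small enough so that $tk,tk'<c_2/2$, the corresponding $\mu_2$ or $\mu_2'$ is bounded away from ${\mathbf{Z}}$ by $c_2/(4\pi)$, which is $\ge\gamma tk$ (resp.\ $\gamma tk'$) for $\gamma$ small, giving (III) or (IV). Otherwise both are within $c_2$ of $2\pi{\mathbf{Z}}$; summing gives $2\tilde\theta\in 2\pi{\mathbf{Z}}+(-2c_2,2c_2)$ and differencing gives $2\omega\in 2\pi{\mathbf{Z}}+(-2c_2,2c_2)$, and using $|\tilde\theta|<\pi/2$ and $\omega\in[0,2\pi)$ this forces $|\tilde\theta|<c_2$ and $\omega$ within $c_2$ of $\{0,2\pi\}$. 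In this residual region the sign of $\tilde\theta$ is decisive: if $\tilde\theta>0$ and, say, $\omega\in[0,c_2)$, then $tk+\omega+\tilde\theta\ge\tilde\theta\ge M_1 tk$, giving (III) with $\gamma=M_1/(2\pi)$; if $\tilde\theta<0$, then $tk'-\omega+\tilde\theta\le-(M_1-1)tk'<0$ while $|tk'-\omega+\tilde\theta|\le 2c_2<\pi$, so $\operatorname{dist}(\mu_2',{\mathbf{Z}})\ge(M_1-1)tk'/(2\pi)$ and (IV) holds. The range $\omega$ near $2\pi$ is handled symmetrically by the shift $\omega\mapsto\omega-2\pi$.

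The main obstacle is exactly this ``both close to $2\pi{\mathbf{Z}}$'' residual in Case B, where neither $\omega+\tilde\theta$ nor $-\omega+\tilde\theta$ is comfortably far from $2\pi{\mathbf{Z}}$; the sign of $\tilde\theta$ becomes the key discriminator between (III) and (IV), and the whole argument closes up precisely because the choice $M_1>2(1+M)$ makes $|\tilde\theta|$ dominate both $tk$ and $tk'$ throughout Case B. Taking $\gamma$ to be the minimum of the finitely many explicit positive constants ($1/(4\pi)$, $(M_1-1)/(2\pi)$, $M_1/(2\pi)$, \ldots) produced above, and defining the finitely many regions as the partitioning above, completes the proof.
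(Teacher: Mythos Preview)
Your argument is correct and reaches the same conclusion, but it is organized differently from the paper's proof. The paper does not split into your Cases~A and~B at the outset; instead it first records the single unified lower bound
\[
|tk+\tilde\theta|\ge C\,tk\quad\text{and}\quad |tk'+\tilde\theta|\ge C\,tk',
\]
valid whenever $|\tilde\theta|<tN/M_1$ \emph{or} $|\tilde\theta|>M_1 tN$ (this is where $M_1>2(1+M)$ enters), and then partitions purely by $\omega$: first $c_1<\omega<2\pi-c_1$ (where (III) follows from the size of $\omega$ alone, using only $|\tilde\theta|<\varepsilon_1$), then reduces to $|\omega|\le c_1$ and $\tilde\theta<0$ (the case $\tilde\theta\ge0$ being dispatched immediately by the argument of Lemma~\ref{phase-fnc}), and finally splits into $|\omega|<c_2 t$ versus $c_2 t\le|\omega|\le c_1$, invoking the sign of $tk+\tilde\theta$ only in this last sub-case. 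Your Cases~A and~B are in fact exactly the two sign alternatives for $tk+\tilde\theta$ (Case~A forces $tk+\tilde\theta>0$, Case~B forces it negative when $\tilde\theta<0$), so the paper arrives at the same dichotomy, just later and only where needed. Your route makes the role of $M_1>2(1+M)$ more explicit and treats both signs of $\tilde\theta$ symmetrically; the paper's route is more economical, since the inequality $|tk+\tilde\theta|\ge Ctk$ handles both regimes at once and avoids your separate Case~B discussion of how $\omega\pm\tilde\theta$ sits modulo $2\pi\mathbf{Z}$.
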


\begin{proof}
We begin observing that, if
either 
$|\tilde\theta|<tN/M_1 \text{ or } |\tilde\theta|>M_1 Nt $, then there exists a constant $C>0$ such that
\begin{equation}\label{stima-dal-basso}
|tk+\tilde\theta| \ge Ctk\, \,\,\text{and }
 |tk'+\tilde\theta| \ge Ctk'\, .
\end{equation}
Then
we split the proof in a few cases. 
\subsubsection*{Case 1}
Let $R_{\theta, I}:=\{
(t,\omega,\tilde\theta)  \in R_\theta\,
:\, 0<c_1<\omega<2\pi-c_1\big\}$,
where $c_1>10\varepsilon_1$ is a small constant.
Then (III)
holds  for $(t,\omega, \tilde\theta)\in R_{\theta, I}$
since
$$
 \operatorname{dist}(\mu_2,{\mathbf{Z}}) 
\ge \operatorname{dist}(\omega/2\pi,{\mathbf{Z}})-\frac{tk}{2\pi}-\frac{|\theta|}{2\pi} 
 \ge 
 \frac{c_1}{2\pi}- \frac{tk}{2\pi} - \frac{\varepsilon_1}{2\pi} 
 \ge 
 c_1\big(\frac{1}{2\pi}-\frac{1}{20\pi}\big)- \frac{tk}{2\pi}
  \ge \gamma tk
\,,
$$
for all $k=1\,,\ldots, c'/h$, as a consequence of \eqref{stima-tk}.

Hence, 
possibly replacing $\omega$ by $2\pi-\omega$, we may
assume that $|\omega|\le c_1$, and that
$\operatorname{dist}(\mu_2,{\mathbf{Z}})= |tk+\omega+\tilde\theta|/2\pi$ and that 
$\operatorname{dist}(\mu_2',{\mathbf{Z}})= |tk'-\omega+\tilde\theta|/2\pi$.
Notice that now we may assume that $\tilde\theta<0$ since otherwise
(III) holds on  $R_I$ and (IV) on $R_{I\!I}$, where $R_I,R_{I\!I}$ are
defined in
Lemma \ref{phase-fnc}. 

\subsubsection*{Case 2}
Let $R_{\theta, I\! I}:=\{
(t,\omega,\tilde\theta)  \in R_\theta\,
:\, |\omega|<c_2 t\,\big\}$,
where $c_2>0$ is a small constant.
If $(t,\omega,\tilde\theta)  \in R_{\theta, I\! I}$, then (III) holds 
for all $(k,k')\in\mathcal V_+$, since
$$|tk+\omega+\tilde\theta|\ge |tk+\tilde\theta|-|\omega|\ge
t( Ck-c_2)\ge \gamma t k\,,$$
provided  that $c_2$ is small enough.

\subsubsection*{Case 3}
Next suppose $c_2t\le |\omega|\le c_1$.
If $\omega>0$, then, if $tk+\tilde\theta>0$, 
(III) holds.
If $tk+\tilde\theta<0$, 
we first observe that
$|\tilde\theta|>M_1 tN$.
Indeed, if    
$|\tilde\theta|<tN/M_1$, then
$$|\tilde\theta|<\frac{1}{M_1}t (k+k')\le
\frac{1}{M_1}tk (1+M)\le \frac{tk}{2}\,,$$
since we chose $M_1 >2(1+M)$. Thus $tk+\tilde\theta>0$, contradicting the hypothesis.
\\
Now it is easy to conclude that
condition (IV) holds,
since
 \begin{equation*}
tk'+\tilde\theta<tk'-M_1 tN<0\,,
\end{equation*}
so that
\begin{equation*}
|tk'+\tilde\theta-\omega|=\omega+
|tk'+\tilde\theta|\ge C t k'\,.
\end{equation*}
If $\omega<0$, then, if $tk+\tilde\theta<0$, 
(III) holds as a consequence of \eqref{stima-dal-basso}.
If $tk+\tilde\theta>0$, 
we notice that
$|\tilde\theta|<\frac{1}{M_1} tN$.
Then condition (IV) holds,
since
 \begin{equation*}
tk'+\tilde\theta>
tk'-\frac{1}{M_1} tN>
tk' (1-\frac{1}{M_1})-\frac{M}{M_1}tk'>0
\end{equation*}
provided that $M_1>1+M$, so that
$ %\begin{equation*}
|tk'+\tilde\theta-\omega|=|\omega|+
tk'+\tilde\theta\ge C t k'$.%\,.\end{equation*}
\end{proof}

\begin{proposition}\label{estimate-Sigma-pm}
 There exists a constant $C>0$ such that,
  for all $|t|\in A$ when $n>1$,
and for all $h^2\le |t|\le C h^{4/3}$
when $n=1$,
\begin{equation*}
\big| \Upsilon_j(\omega\,,\theta)\big|
\le C \frac{1}{|t|^{n+1}}
\end{equation*}
uniformly in $\omega$ and $\theta\in[0,\varepsilon_1]$, for $j=1,2$.
\end{proposition}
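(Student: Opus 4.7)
The plan is to parallel the proof of Proposition \ref{estimate-Sigmap1}, with two additional ingredients needed: (a) the new phase term $\pm\theta(k+k')$ coming from the Bessel asymptotics contributes to the frequency to which Lemma \ref{Poisson} is applied, and (b) the symbol property of $\sigma_2$ recorded in Lemma \ref{symbol-Sigmap21} replaces that of $\sigma_1$. Since $N\,g_{n-1}(k')\tilde g_{n-1}(k)$ is a polynomial of bidegree $(n-1,n-1)$ with leading term $N\,k^{n-1}(k')^{n-1}$, all lower-order monomials produce strictly better bounds and can be discarded. The task therefore reduces to estimating
\begin{equation*}
\sum_{k,k'\ge n/2}e^{i[tkk'+\omega(k'-k)\pm\theta(k+k')]}\,\varphi(h^2kk')\,N\,k^{n-1}(k')^{n-1}\,\sigma_2(k,k').
\end{equation*}

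On the parameter region $R_\theta$ of Lemma \ref{phase-fnc-2}, the argument mirrors Cases (I)--(II) of Proposition \ref{estimate-Sigmap1}. In Case (III), the inequality $\operatorname{dist}(\mu_2,{\mathbf{Z}})\ge\gamma tk$ allows one to apply Lemma \ref{Poisson} to the inner sum in $k'$, taking $\mu=\mu_2$, $\delta=\gamma tk$, $\varepsilon\asymp h^2k$, cut-off $x\mapsto x^n\varphi(x)$, and symbol $\sigma_2(k,\cdot)$. Choosing $L=n+1$ and summing over $k\in[n/2,\lfloor c'/h\rfloor]$ using $\sum_k k^{-2}<\infty$ gives
\begin{equation*}
|\Upsilon_j(\omega,\theta)|\le\frac{C}{|t|^{n+1}}+\frac{C\log(1/h)}{h^{2n}}.
\end{equation*}
Case (IV) is symmetric, with Poisson summation carried out in the $k$-variable. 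The logarithmic term is absorbed by $|t|^{-(n+1)}$ as soon as $|t|\le ch^s$ with $s>s_n=2n/(n+1)$ for $n>1$, or $s\ge 4/3$ for $n=1$, matching the assumptions of the proposition.

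The main obstacle, where I expect the most delicate work, is the resonant range $(t,\omega,\tilde\theta)\notin R_\theta$, that is, $tN/M_1\le|\tilde\theta|\le M_1 tN$, on which neither (III) nor (IV) is available and the phase in $k'$ (or $k$) may stall. Here my plan is to abandon Lemma \ref{Poisson} and to exploit instead the pointwise decay $(N\theta)^{-(n-1/2)}$ built into $\sigma_2$: the resonance condition forces $N\asymp\theta/|t|$, so that
\begin{equation*}
(N\theta)^{-(n-1/2)}\asymp\bigl(|t|/\theta^{2}\bigr)^{n-1/2}.
\end{equation*}
Multiplying this by $N\,k^{n-1}(k')^{n-1}\approx h^{-(2n-2)}(\theta/|t|)$ and counting the lattice points in the intersection $\{kk'\asymp h^{-2}\}\cap\{k+k'\asymp\theta/|t|\}$ reduces the resonant contribution to a purely algebraic estimate. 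It is precisely this counting step that separates $n>1$ from $n=1$: the half-integer decay from $\sigma_2$ is enough for $n>1$ on the whole interval $|t|\le ch^s$ with $s>s_n$, whereas for $n=1$ the gain is only $(N\theta)^{-1/2}$ and this forces the more restrictive window $|t|\le Ch^{4/3}$ recorded in the proposition. Combining the bounds on $R_\theta$ and on its complement yields $|\Upsilon_j(\omega,\theta)|\le C/|t|^{n+1}$ uniformly in $(\omega,\theta)$, as claimed.
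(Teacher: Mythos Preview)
Your proposal is correct and follows essentially the same route as the paper. The paper makes the split you describe explicit by introducing a smooth cut-off $\Psi$ supported in $[1/M_1,M_1]$ and writing $\Upsilon_j=\Upsilon_{j,1}+\Upsilon_{j,2}$ according to the factor $[1-\Psi(\theta/tN)]+\Psi(\theta/tN)$; then $\Upsilon_{j,1}$ is handled exactly as you outline via Lemma~\ref{phase-fnc-2} and Lemma~\ref{Poisson}, while on the resonant piece $\Upsilon_{j,2}$ one uses $\theta\asymp tN$ to replace $(N\theta)^{n-1/2}$ by $(N^2t)^{n-1/2}$, obtaining a per-term bound $1/t^{n-1/2}$ and then summing trivially over the $\asymp h^{-2}$ lattice points to get $C/(t^{n-1/2}h^2)$, which is $\le C/t^{n+1}$ precisely under the stated constraints on $t$ (and forces $|t|\le Ch^{4/3}$ when $n=1$). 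Your ``lattice-point counting in the intersection'' amounts to the same thing, since on the cone $k\asymp k'\asymp 1/h$ the additional constraint $k+k'\asymp\theta/|t|$ does not reduce the count.
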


\begin{proof}
Recall that
$\Upsilon_j$
have been  defined in \eqref{Bessel}.  
Next, we fix a smooth cut-off function $\Psi$ with compact support in
%$(0,+\infty)$ such that $\psi=1$ on
$[1/M_1,M_1]$, where  $M_1>1$ is a (large) constant.
For $j=1,2$ we decompose $\Upsilon_j$ by
setting 
\begin{align}
\Upsilon_j(\omega,\theta)
%\notag \\
& = \sum_ {k,k'\ge 1 }
e^{i[tkk' +\omega (k'-k)\pm\theta(k+k')]} \varphi
(h^2 kk') \tilde\psi(k,k')\eta_+(k'-k) \chi_2(N
\theta)
\frac{N g_{n-1} (k')  \tilde g_{n-1}(k) }{(N\theta)^{n-1/2}}
\notag \\ 
& \qquad\qquad\qquad\qquad\qquad\qquad\qquad\qquad\qquad\qquad\qquad\quad
\times \rho_j (N\theta)\, \big[ 1- \Psi (\theta /tN)\big]\notag \\
& \quad+ \sum_ {k,k'\ge 1 }
e^{i[tkk' +\omega (k'-k)\pm\theta(k+k')]} \varphi
(h^2 kk') \tilde\psi(k,k')\eta_+(k'-k) \chi_2(N
\theta)
\frac{N g_{n-1} (k')  \tilde g_{n-1}(k) }{(N\theta)^{n-1/2}}
\notag \\ 
& \qquad\qquad\qquad\qquad\qquad\qquad  \qquad\qquad\qquad\qquad\qquad\qquad\qquad
\times \rho_j (N\theta)\Psi (\theta /tN)
\notag \\
& =: \Upsilon_{j,1}(\omega,\theta) +\Upsilon_{j,2}(\omega,\theta)
\, ,
\end{align}
 We also have
\begin{align*}
\Upsilon_3 (\omega,\theta)
& = \sum_ {k,k'\ge 1}
e^{i[tkk' +\omega (k'-k)]} \varphi
(h^2 kk') N (kk')^{n-1} \sigma_2(k,k') \, .
\end{align*}

For $\Upsilon_{j,1}$ 
we argue as in the proof of Proposition \ref{estimate-Sigmap1} and
divide into the cases in which (III) or (IV) hold, respectively. If, for
instance, (III) holds, then as in the proof of Case (I)
in Proposition 
\ref{estimate-Sigmap1}, 
we have
\begin{align*}
|\Upsilon_{j,1} (\omega,\theta)|  & \le \frac{C}{h^{2n-2}}  
\sum_ {k=1 }^{\lfloor c'/h  \rfloor}
 \Big|
\sum_{k'=1 }^{\lfloor c'/h  \rfloor}
 e^{ik'[tk +\omega +\tilde\theta]}
\varphi (h^2 kk') N (h^2 kk')^{n-1}
 \sigma_2(N) \Big|\\
&  \\
& \le \frac{C}{h^{2n}}  \sum_ {k=1 }^{\lfloor c'/h  \rfloor}
\frac{1}{k} \sum_{k'=  \lfloor a'/(h^2 k) \rfloor}^{\lfloor b'/(h^2 k)
  \rfloor+1} 
 e^{ik'[tk +\omega \pm\theta]}
\varphi (h^2 kk')  (h^2 kk')^{n}
 \sigma_2(N) \Big|  \\
& \qquad\qquad+\frac{C}{h^{2n-2}}  \sum_ {k=1 }^{\lfloor c'/h  \rfloor}k
 \sum_{k'=  \lfloor a'/(h^2 k) \rfloor}^{\lfloor b'/(h^2 k)
  \rfloor+1} 
 e^{ik'[tk +\omega \pm\theta]}
\varphi (h^2 kk')  (h^2 kk')^{n-1}
 \sigma_2(N) \Big|  \\
& \le \frac{C}{h^{2n+\kappa}}  \, ,
\end{align*}
for any $\kappa>0$,
uniformly in $\omega$ and  $\theta\in[0,\varepsilon_1]$,
by proceeding as in the proof of 
\eqref{end-case-1} by means of Lemma
\ref{phase-fnc-2}. 
The proof in the case in which condition (IV) holds is analogous to the proof of
\eqref{end-case-2} in Proposition
\ref{estimate-Sigmap1}, 
and it is omitted.\medskip

Finally we estimate $\Upsilon_{j,2}$.
In this sum we take advantage
of the fact that $\theta/tN$ is bounded above and below from zero.
We have
\begin{align*}
|\Upsilon_{j,2}(\omega,\theta)|
& \le C\sum_ {k,k'\ge 1 }
\varphi (h^2 kk') \tilde\psi(k,k')\eta_+(k'-k) \chi_2(N \theta)
\frac{N^{2n-1}}{(N\theta)^{n-1/2}} \Psi (\theta /tN) \\
& \le  C\sum_ {k,k'\ge 1 }
\varphi (h^2 kk') \tilde\psi(k,k')\eta_+(k'-k) 
\frac{N^{2n-1}}{(N^2t)^{n-1/2}} \\
& \le  \frac{C}{t^{n-1/2}} \sum_ {k,k'\ge 1 }
\varphi (h^2 kk') \tilde\psi(k,k') \\
& \le 
 \frac{C}{t^{n-1/2} h^2}
 \le \frac{C}{t^{n+1+1/n-1/2}}
 \,,
\end{align*}
since $t\le h^s$, so that $h^{-2}\le
t^{-2/s}<t^{-(n+1)/n}$.
Thus when $n=1$
\begin{equation*}
|\Upsilon_{j,2}(\omega,\theta)|
\le
 \frac{C}{t^{2}}
\end{equation*}
for all $h^2\le |t|\le c h^{4/3}$, while
if $n>1$
\begin{equation*}
|\Upsilon_{j,2}(\omega,\theta)|
\le
 \frac{C}{t^{n+1}}
\end{equation*}
for all $|t|\in A$.
Finally, we observe that $\Upsilon_3$
may be treated as the sum 
in \eqref{suffices} by means of Lemma \ref{phase-fnc}, so that
$$
\big|\Upsilon_3 (\omega,\theta)\big|
\le C \frac{1}{|t|^{n+1}}\, ,
$$
uniformly in $\omega$ and $\theta\in[0,\varepsilon_1]$, for all $|t|\in [h^2,
ch^s]$.
\end{proof}

\medskip

Thus, as a consequence of the decompositions
\eqref{sommaspezzata}
and
\eqref{decomp-ypsilon}, 
of 
Propositions
\ref{estimate-Sigmap1} and \ref{estimate-Sigma-pm},
by using also \eqref{est-SigmaR}
and
\eqref{est-SigmaR-2},
if $n>1$  in \eqref{Lj} 
we obtain
$$\big|
K_1^\pm(\omega,\theta)+K_2^\pm (\omega,\theta)
\big|\le
 \frac{C}{|t|^{n+1}}\, ,
$$
uniformly in $\omega$ and  $\theta\in[0,\pi/2-\varepsilon_1]$, so that 
 we finally get  \eqref{normasemplificata}, that is,
$$\sup_{(z,w)\in\Omega} \Big|
 \sum_ {{\ell,\ell'=1}}^{+\infty} 
  e^{it{\lambda}_{\ell,\ell'}}\varphi (h^2{\lambda}_{\ell,\ell'} )\psi(\ell'/\ell)Z_{\ell,\ell'} (z,w)\Big|\le C\frac{1}{h^{2n+\kappa}}\, ,
$$
for all $t\in A$, $n>1$.
When $n=1$, as a consequence of Proposition \ref{estimate-Sigma-pm} 
we get
$$\sup_{(z,w)\in\Omega} \Big|
 \sum_ {{\ell,\ell'=1}}^{+\infty} 
  e^{it{\lambda}_{\ell,\ell'}}\varphi (h^2{\lambda}_{\ell,\ell'} )\psi(\ell'/\ell)Z_{\ell,\ell'} (z,w)\Big|\le C\frac{1}{t^{2}}\, 
$$
for all $|t|\in [h^2, ch^{4/3} ]$.

This concludes  the proof of 
Theorem \ref{dispersive}. \qed  

\medskip

\section{The Strichartz estimate}\label{dim_teo}\medskip

In this section we complete the proof of Theorem
\ref{strichartz}.

Following a classical pattern 
we invoke a  result by Keel and Tao \cite{KeelTao}.
Consider
the family of  operators
$$
U(t):=\chi_J (t)\, e^{it\mathcal L}\,\varphi (h^2 \mathcal L)\,,$$
where $t\in{\mathbf{R}}$, $h\in(0,1]$, $\chi_J$ denotes the characteristic
function of the interval
$J$ and $|J|\approx  h^\alpha$, where, if $n>1$, 
we will select either
$\alpha=s>s_n$  
(with $s_n$ given by \eqref{def-sn})
or $\alpha=2$.
If $n=1$,
we will select either
$\alpha\ge 4/3$  or $\alpha=2$.

Then $U(t)$ satisfies the energy estimate
$\|U(t)\|_{(L^2,L^2)}\le C$ for some positive constant $C$ and
the following {\it{untruncated decay estimate}}
\begin{align*}
\|U(t)U(\tau)^*v_0\|_{L^{\infty}}&\|\chi_J (t-\tau)e^{i(t -\tau)\mathcal L} \varphi (h^2 \mathcal L)v_0\|_{L^{\infty}}\\
& \le \frac{C}{|t-\tau|^{Q/2}}\|v_0\|_{L^1 (S^{2n+1})}
\end{align*}
for all $t,\,\tau\in{\mathbf{R}}$, $t\neq \tau$.
Hence  Theorem 1.2
in \cite{KeelTao} yields the following result.

\begin{proposition}
For any fixed  
$\varphi \in  {\mathcal{C}}_0^{\infty}({\mathbf{R}}_+)$,
there exists a constant $C>0$
such that for  all $h\in (0,1]$,
 for any interval $J$ of length $|J|\le h^\alpha$
 and for all $v_0\in {\mathcal{C}}^{\infty}(S^{2n+1})$ 
the following estimate holds
\begin{equation}\label{3.6}
\Big( \int_J \| e^{it\mathcal L}\,\varphi (h^2 \mathcal L)v_0\|_{L^q}^p\,dt\Big)^{1/p}
\le C
\|v_0\|_{L^2}
\end{equation}
for all pairs $(p,q)\neq (2,+\infty)$, 
satisfying \eqref{condammiss}, where $\alpha=2$ if $v_0\in {\mathcal{C}}^\infty_\mathcal E$
and $\alpha>s_n$ if $v_0\in {\mathcal{C}}^\infty_\mathcal V$, and $\mathcal E,\mathcal V$ are defined
in \eqref{def-cono} and \eqref{def-spigolo}.
Here $C$ depends only on $p,q,n$ and $s$. \medskip
\end{proposition}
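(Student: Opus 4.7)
The plan is to invoke the abstract Strichartz machinery of Keel--Tao (\cite[Thm.~1.2]{KeelTao}) with the operator family
$$U(t) := \chi_J(t)\, e^{it\mathcal L}\,\varphi(h^2 \mathcal L).$$
For this, one needs two ingredients: a uniform $L^2\to L^2$ energy bound on $U(t)$ and an untruncated $L^1\to L^\infty$ dispersive bound on $U(t)U(\tau)^*$ of the form $C|t-\tau|^{-Q/2}$. Once these are in place, the conclusion \eqref{3.6} for every admissible pair $(p,q)\ne(2,\infty)$ satisfying \eqref{condammiss} is automatic from Keel--Tao.

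First I would verify the energy estimate: since $e^{it\mathcal L}$ is unitary on $L^2(S^{2n+1})$ and $\varphi(h^2\mathcal L)$ is uniformly bounded on $L^2$ by the spectral theorem (as $\varphi$ is bounded), we obtain $\|U(t)\|_{L^2\to L^2}\le \|\varphi\|_{\infty}$ uniformly in $t$ and $h$. Next, using unitarity of $e^{it\mathcal L}$ and the functional calculus, one writes
$$U(t)U(\tau)^* = \chi_J(t)\chi_J(\tau)\, e^{i(t-\tau)\mathcal L}\,\tilde\varphi(h^2\mathcal L),$$
where $\tilde\varphi:=\varphi^2\in\mathcal{C}_0^\infty(\mathbb R_+)$ has support in $[a,b]$. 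Hence the required dispersive bound reduces to the estimate for $e^{is\mathcal L}\tilde\varphi(h^2\mathcal L)$ at time $s=t-\tau$, which lies in $[-2|J|,2|J|]$.

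Now I would invoke Theorem \ref{dispersive}. If $v_0\in\mathcal C^\infty_{\mathcal V}$, the spectral projection onto $\mathcal V$ commutes with $e^{it\mathcal L}\tilde\varphi(h^2\mathcal L)$, so only the cone term of Theorem \ref{dispersive} contributes; choosing $\alpha>s_n$ (resp.\ $\alpha\ge 4/3$ when $n=1$) ensures $|t-\tau|\le 2h^\alpha\le c h^{s_n'}$ for some admissible $s_n'$, and part (i) of Theorem \ref{dispersive} with $p=1,p'=\infty$ gives
$$\|U(t)U(\tau)^* v_0\|_{L^\infty}\le \frac{C}{|t-\tau|^{Q/2}}\|v_0\|_{L^1}.$$
If instead $v_0\in\mathcal C^\infty_{\mathcal E}$, only the complementary piece $1-\psi(\ell'/\ell)$ of the kernel contributes, and choosing $\alpha=2$ places $|t-\tau|$ in the interval $I_2=[-h^2,h^2]$ to which part (ii) of Theorem \ref{dispersive} applies, yielding the same decay.

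With both bounds established, Keel--Tao's theorem immediately produces
$$\Big(\int_{\mathbb R}\|U(t)v_0\|_{L^q}^p\,dt\Big)^{1/p}\le C\|v_0\|_{L^2}$$
for all $(p,q)\ne(2,\infty)$ satisfying \eqref{condammiss}, and unwinding the definition of $U$ yields \eqref{3.6}. No real obstacle is expected: all the substantive analytic work has already been carried out in Theorem \ref{dispersive}; the only point requiring attention is that the constants from both the energy and the dispersive bounds are independent of $h$, which they are by construction, so that the Keel--Tao constant $C$ depends only on $p$, $q$, $n$, and $s$ (through the allowed length of $J$), as claimed.
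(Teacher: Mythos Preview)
Your proposal is correct and follows essentially the same route as the paper: define $U(t)=\chi_J(t)e^{it\mathcal L}\varphi(h^2\mathcal L)$, verify the uniform $L^2$ energy bound, feed the dispersive estimate of Theorem~\ref{dispersive} (case (i) for $\mathcal V$-localized data with $\alpha>s_n$, case (ii) for $\mathcal E$-localized data with $\alpha=2$) into the Keel--Tao machinery, and read off \eqref{3.6}. You are in fact slightly more precise than the paper in writing $U(t)U(\tau)^*=\chi_J(t)\chi_J(\tau)e^{i(t-\tau)\mathcal L}\varphi^2(h^2\mathcal L)$ and in noting that the spectral localization commutes with the propagator; the only cosmetic point to tidy is that $|t-\tau|\le|J|$ (not $2|J|$), and that to match $\pi_{\mathcal V}$ exactly one should take the smooth cut-off $\psi$ in Theorem~\ref{dispersive} to equal $1$ on $[1/M,M]$ with slightly enlarged support.
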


Finally, in order to prove
the Strichartz estimate \eqref{Strichartz}
we shall need  an easy consequence 
of the Littlewood--Paley decomposition
for the sublaplacian $\mathcal L$
on the complex sphere. 
More precisely, we shall use the following result.
\begin{theorem}\label{littlewood-paley}
Let $\tilde\psi \in{\mathcal{C}}^{\infty}_0 ({\mathbf{R}}_+)$
and $\psi \in {\mathcal{C}}^{\infty}_0 ({\mathbf{R}})$, such that
\begin{equation}
\tilde\psi (\lambda)+\sum_{j=1}^{\infty}\psi (2^{-2j}\lambda )=1\,,\;\lambda\in{\mathbf{R}}\,.
\end{equation}
Then for $2\le q<\infty$ there exists a constant $C_q$
such that
\begin{equation}\label{LittlPaley}
\|f\|_{L^q (S^{2n+1})}\le C_q \Big(
\| \tilde\psi (\mathcal L)f\|_{L^q (S^{2n+1})}+
\Big(
\sum_{j=1}^{+\infty}
\| \psi (2^{-2j}\mathcal L)f\|_{L^q(S^{2n+1})}^2
\Big)^{1/2}
%\le C' \|f\|_{L^q (S^{2n+1})}
\Big)\, ,
\end{equation}
for  $f\in L^q(S^{2n+1})$.
\end{theorem}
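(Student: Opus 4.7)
The plan is to deduce \eqref{LittlPaley} by duality from the \emph{reverse} Littlewood--Paley inequality
\begin{equation}\label{LP-dual}
\Big\|\Big(\sum_{j=1}^{+\infty}|\widetilde\Psi(2^{-2j}\mathcal L)g|^2\Big)^{1/2}\Big\|_{L^{q'}(S^{2n+1})}\le C\,\|g\|_{L^{q'}(S^{2n+1})}\,,\qquad 1<q'\le 2\,,
\end{equation}
where $\widetilde\Psi\in{\mathcal{C}}_0^\infty({\mathbf{R}}_+)$ is an auxiliary cut-off which is identically $1$ on $\operatorname{supp}\psi$ and is supported in a slightly larger set; this choice guarantees the reproducing identity $\psi(2^{-2j}\mathcal L)=\psi(2^{-2j}\mathcal L)\,\widetilde\Psi(2^{-2j}\mathcal L)$ for every $j\ge 1$. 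Once \eqref{LP-dual} is at our disposal, \eqref{LittlPaley} follows from a short duality computation.

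Concretely, I would write $\|f\|_{L^q}=\sup_{\|g\|_{L^{q'}}\le 1}|\langle f,g\rangle|$ and, using the partition of unity together with the reproducing identity and the self-adjointness of $\psi(2^{-2j}\mathcal L)$, decompose
$$
\langle f,g\rangle=\langle\widetilde\psi(\mathcal L)f,g\rangle+\sum_{j\ge 1}\langle\psi(2^{-2j}\mathcal L)f,\widetilde\Psi(2^{-2j}\mathcal L)g\rangle\,.
$$
The first term is bounded by $\|\widetilde\psi(\mathcal L)f\|_{L^q}\|g\|_{L^{q'}}$. For the tail, pointwise Cauchy--Schwarz in $j$ followed by H\"older in the $S^{2n+1}$ variable yields
$$
\Big|\sum_{j\ge 1}\langle\psi(2^{-2j}\mathcal L)f,\widetilde\Psi(2^{-2j}\mathcal L)g\rangle\Big|
\le\Big\|\Big(\sum_{j\ge 1}|\psi(2^{-2j}\mathcal L)f|^2\Big)^{\!1/2}\Big\|_{L^q}\,\Big\|\Big(\sum_{j\ge 1}|\widetilde\Psi(2^{-2j}\mathcal L)g|^2\Big)^{\!1/2}\Big\|_{L^{q'}}\,,
$$
and applying \eqref{LP-dual} to $g$ bounds the second factor by $C\|g\|_{L^{q'}}\le C$, whence \eqref{LittlPaley}.

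It remains to establish \eqref{LP-dual}, which I would obtain by the classical randomization argument of Stein. Introducing the Rademacher functions $\{r_j(\omega)\}_{j\ge 1}$ on $[0,1]$ and applying Khintchine's inequality, the estimate \eqref{LP-dual} is equivalent to the uniform $L^{q'}$-boundedness of the random spectral multipliers
$$
m_\omega(\mathcal L):=\sum_{j=1}^{+\infty}r_j(\omega)\,\widetilde\Psi(2^{-2j}\mathcal L)\,.
$$
The symbols $m_\omega$ are uniformly bounded, smooth away from the origin, and satisfy Mihlin--H\"ormander conditions of all orders with bounds independent of $\omega$ (since at each scale only finitely many dyadic terms overlap). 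Thus the required $L^{q'}$-bound follows from a Mihlin--H\"ormander type spectral multiplier theorem for the sublaplacian $\mathcal L$ on $S^{2n+1}$, which is available because $\mathcal L$ is positive, self-adjoint and subelliptic, its heat semigroup enjoys Gaussian-type bounds with respect to the Kor\'anyi distance \eqref{distanza}, and $(S^{2n+1},d,d\sigma)$ is a space of homogeneous type of dimension $Q$; alternatively the multiplier theorem can be transferred locally from the Heisenberg group via the Cayley transform, as in \cite{Folland, CaPe2}.

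The principal obstacle is precisely the invocation of this spectral multiplier theorem with sufficient uniformity in $\omega$ to accommodate the Rademacher sums; once this is secured, the rest of the argument above is the standard duality/Khintchine scheme and produces the desired constant $C_q$, valid for all $2\le q<\infty$ (the case $q=2$ being, of course, Plancherel).
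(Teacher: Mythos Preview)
The paper does not actually prove Theorem~\ref{littlewood-paley}: immediately after the statement it says that this result ``and the Littlewood--Paley decomposition are of independent interest \dots\ and are proved in the forthcoming paper \cite{CaPe2}.'' So there is no proof in the present paper to compare against; the theorem is quoted as an external ingredient.

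Your proposed argument is the standard one and is essentially correct. Two small points are worth making explicit. First, the duality computation you wrote down bounds $\|f\|_{L^q}$ by
\[
\|\widetilde\psi(\mathcal L)f\|_{L^q}+C\Big\|\Big(\sum_{j\ge1}|\psi(2^{-2j}\mathcal L)f|^2\Big)^{1/2}\Big\|_{L^q},
\]
i.e.\ the $L^q$-norm of the square function, whereas the stated inequality \eqref{LittlPaley} has the a priori larger quantity $\big(\sum_{j\ge1}\|\psi(2^{-2j}\mathcal L)f\|_{L^q}^2\big)^{1/2}$ on the right. For $q\ge2$ the passage from the former to the latter is a single application of Minkowski's integral inequality (since $q/2\ge1$), so \eqref{LittlPaley} follows a fortiori; you should say this explicitly. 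Second, the Mihlin--H\"ormander spectral multiplier theorem for $\mathcal L$ on $S^{2n+1}$ that you invoke to control the Rademacher multipliers $m_\omega(\mathcal L)$ is not hypothetical: it is precisely the content of \cite{Cowling-and-others}, already in the paper's bibliography, so you may cite that directly rather than appealing to heat-kernel bounds or transfer from the Heisenberg group. With these two clarifications your outline is a complete and correct proof.
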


This  result and  the Littlewood--Paley decomposition
are of independent interest (see the recent paper \cite{Bouclet} for a discussion
of analogous inequalities in the Riemannian case),
and are proved
in the forthcoming paper \cite{CaPe2}.

\subsubsection*{End of the proof of Theorem \ref{strichartz}}
Let  $v_0\in {\mathcal{C}}^\infty_\mathcal V$, $v_0\in {\mathcal{C}}^\infty_\mathcal E$ respectively,
and $\alpha=s>s_n$, $\alpha=2$ respectively.

By writing $[-1,1]=\cup_{k=1}^{N}J_k$,
with $J_k$ intervals, $|J_k|\approx h^\alpha $
 and $N\approx h^{-\alpha}$, we have
\begin{align*}
\int_{-1}^{1}
\| e^{it\mathcal L}\,\varphi (h^2 \mathcal L)v_0\|_{L^q}^p\,dt &\le 
\sum_{k=1}^{N}
\int_{J_k}
\| e^{it\mathcal L}\,\varphi (h^2 \mathcal L)v_0\|_{L^q}^p\,dt\\
&\le C N  \|v_0\|_{L^2}^p\\
& \le C  h^{-\alpha}\|v_0\|_{L^2}^p\,,
\end{align*}
 so that
 \begin{equation}\label{stima-1,1}
\Big(
\int_{-1}^1
\| e^{it\mathcal L}\,\varphi (h^2 \mathcal L)v_0\|_{L^q}^p\,dt\Big)^{1/p}\le
Ch^{-\alpha /p} \|v_0\|_{L^2} \,.
\end{equation}
Now, let 
 $\tilde{\varphi}\in {\mathcal{C}}_0^{\infty}({\mathbf{R}}_+)$ be such that 
$\tilde{\varphi}\,\varphi=\varphi$.
Then   \eqref{stima-1,1},
with $\varphi$ replaced by $\tilde\varphi$ and the initial datum
$ \varphi (h^2 \mathcal L)v_0$,
gives
$$
\Big( \int_{-1}^1
\| e^{it\mathcal L}\,\tilde\varphi (h^2 \mathcal L)\, \varphi (h^2 \mathcal L)v_0\|_{L^q}^p\,dt\Big)^{1/p}\le
Ch^{-\alpha /p}\|\varphi (h^2 \mathcal L) v_0\|_{L^2}\,,
$$
that is,
\begin{equation}\label{local-strich}
\Big(
\int_{-1}^1
\| e^{it\mathcal L}\, \varphi (h^2 \mathcal L)v_0\|_{L^q}^p\,dt\Big)^{1/p}\le
Ch^{-\alpha /p}\|\varphi (h^2 \mathcal L) v_0\|_{L^2}
\,.
\end{equation}

We now apply Theorem \ref{littlewood-paley}
to $f=v(t)=e^{it\mathcal L} v_0$ and then we take the $L^p$-norm
with respect to the variable $t$ on $[-1,1]$ and obtain that
$$
\|v \|_{L^p( [-1,1], L^q (S^{2n+1}))}
\le C\Big( \| v_0\|_{L^2 (S^{2n+1})}
+\Big\|
\Big(\sum_{j=1}^{+\infty}
\| e^{it\mathcal L} \psi (2^{-2j}\mathcal L)v_0\|_{L^q(S^{2n+1})}^2
\Big)^{1/2} \Big\|_{L^p ([-1,1])} \Big)
\, .
$$
Next, let $v_0$ be any function in ${\mathcal{C}}^\infty(S^{2n+1})$. 
Since $p\ge2$,  by Minkowski's integral inequality we have
\begin{align*}
& \Big\| \Big(\sum_{j=1}^{+\infty}
\|e^{it\mathcal L} \psi (2^{-2j}\mathcal L)v_0\|_{L^q(S^{2n+1})}^2
\Big)^{1/2} \Big\|_{L^p ([-1,1])} \\
& \quad \le 
\bigg(
\sum_{j=1}^{+\infty}
\Big(
\int_{-1}^1
\|e^{it\mathcal L} \psi (2^{-2j}{\mathcal{L}})v_0\|_{L^q(S^{2n+1})}^{p}
dt \Big)^{2/p}  \bigg)^{ 1/2}\\
& \quad \le 
\bigg(
\sum_{j=1}^{+\infty}
\Big(
\int_{-1}^1
\|e^{it\mathcal L} \psi (2^{-2j}{\mathcal{L}})\pi_\mathcal V v_0\|_{L^q(S^{2n+1})}^{p}
dt \Big)^{2/p}  \bigg)^{ 1/2}
\\ & \quad\quad\quad\quad\quad\quad
+ \bigg(
\sum_{j=1}^{+\infty}
\Big(
\int_{-1}^1
\|e^{it\mathcal L}  \psi (2^{-2j}{\mathcal{L}})\pi_\mathcal E v_0\|_{L^q(S^{2n+1})}^{p}
dt \Big)^{2/p}  \bigg)^{ 1/2}\\
& \quad\le C
\Big(
\sum_{j=1}^{+\infty}
2^{(2js/p)}
\| \psi (2^{-2j}{\mathcal{L}})\pi_\mathcal V v_0\|_{L^2(S^{2n+1})}^{2}
\Big)^{ 1/2}
+ C\Big(
\sum_{j=1}^{+\infty}
2^{(4j/p)}
\| \psi (2^{-2j}{\mathcal{L}})\pi_\mathcal E v_0\|_{L^2(S^{2n+1})}^{2}
\Big)^{ 1/2}
\\
& \quad \le C \| v_0\|_{\mathcal X^{(s/p,2/p)}} \,,
\end{align*}
where we used, in particular, the Strichartz estimate
\eqref{local-strich} for the spectral truncations. 
This yields
\eqref{Strichartz}.
Analogous arguments lead to
\begin{equation*}
 \Big\| \Big(\sum_{j=1}^{+\infty}
\|e^{it\mathcal L}  \psi (2^{-2j}{\mathcal{L}})v_0\|_{L^q(S^{3})}^2
\Big)^{1/2} \Big\|_{L^p ([-1,1])}
 \le C \| v_0\|_{\mathcal X^{(s/p,2/p)}}
\end{equation*}
for all $s\ge 4/3$, when $n=1$.
\qed
\medskip

The following result follows at once from 
 the Strichartz estimates
\eqref{Strichartz}  and Minkowski inequality.
\begin{corollary}\label{Duhamel}
If $p$ and $q$ satisfy
$\frac{2}{p}+\frac{Q}{q}=\frac{Q}{2}$, $p\ge 2$, $q<\infty$, then
for all $T>0$ and for all $s>s_n$, $s_n$ defined by
\eqref{def-sn}, if $n>1$, 
or for all $s\ge 4/3$ if $n=1$,
there exists $C=C(p,T,s)$ such that for every $f\in L^1 ([-T,T], \mathcal X^{s/p,2/p})$
we have
\begin{equation}\label{Minkowski}
\big\| \int_0^t e^{i(t-t'){\mathcal{L}}}
f(t')dt'\big\|_{L^p([-T,T], L^q (S^{2n+1}))}\le C
\|f\|_{L^1([-T, T], \mathcal X^{(s/p,2/p)})}\,.
\end{equation}
\end{corollary}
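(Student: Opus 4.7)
The plan is to derive \eqref{Minkowski} as a direct consequence of Theorem \ref{strichartz} via Minkowski's integral inequality, following the standard pattern for turning homogeneous Strichartz bounds into inhomogeneous ones. The argument is essentially routine given the work already carried out in Sections \ref{dispersiveprima}--\ref{dim_teo}, and should not require any new spectral or dispersive analysis.

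First, I would rewrite the Duhamel term as
\begin{equation*}
\int_0^t e^{i(t-t'){\mathcal{L}}} f(t')\, dt' = \int_{-T}^T \chi(t,t')\, e^{i(t-t'){\mathcal{L}}} f(t')\, dt',
\end{equation*}
where $\chi(t,t')$ is the characteristic function of the set of pairs $(t,t')$ with $t'$ lying between $0$ and $t$. Taking the $L^p_t([-T,T], L^q_x)$ norm and applying Minkowski's integral inequality (legitimate since $p\ge 2\ge 1$) gives
\begin{equation*}
\Big\| \int_0^t e^{i(t-t'){\mathcal{L}}} f(t')\, dt' \Big\|_{L^p_t([-T,T], L^q_x)}
\le \int_{-T}^T \big\| e^{i(t-t'){\mathcal{L}}} f(t') \big\|_{L^p_t([-T,T], L^q_x)}\, dt',
\end{equation*}
where the characteristic function has been discarded harmlessly.

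For each fixed $t' \in [-T,T]$, the function $(t,z)\mapsto e^{i(t-t'){\mathcal{L}}} f(t',\cdot)(z)$ is a solution of \eqref{schreq} with initial datum $f(t')$ placed at time $t'$; after the translation $\tau = t-t'$ the inner norm becomes $\|e^{i\tau {\mathcal{L}}} f(t')\|_{L^p_\tau(J_{t'}, L^q_x)}$ for some interval $J_{t'}\subseteq I := [-2T, 2T]$. Enlarging the domain of integration to $I$ and applying Theorem \ref{strichartz} on $I$ (with $s>s_n$ if $n>1$ or $s\ge 4/3$ if $n=1$) yields
\begin{equation*}
\|e^{i\tau{\mathcal{L}}} f(t')\|_{L^p_\tau(I, L^q_x)} \le C(p,T,s)\, \|f(t')\|_{\mathcal X^{(s/p, 2/p)}}.
\end{equation*}

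Integrating this bound in $t' \in [-T,T]$ produces the right-hand side $C(p,T,s)\,\|f\|_{L^1([-T,T], \mathcal X^{(s/p, 2/p)})}$ of \eqref{Minkowski}. No step poses a real obstacle: the constant's dependence on $T$ is inherited directly from Theorem \ref{strichartz} on the fixed interval $[-2T,2T]$, and the admissibility pair $(p,q)$ as well as the mixed Sobolev index $(s/p,2/p)$ are preserved unchanged throughout the argument.
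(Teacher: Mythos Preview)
Your argument is correct and matches the paper's own treatment: the paper states only that the corollary ``follows at once from the Strichartz estimates \eqref{Strichartz} and Minkowski inequality,'' and you have filled in precisely those standard details (rewriting with a characteristic function, applying Minkowski in $t'$, time-translating, and invoking Theorem \ref{strichartz} on the enlarged interval $[-2T,2T]$).
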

\smallskip

\section{{Final remarks}}\label{finalremarks}
\medskip

\subsection{\it{Discussion of optimality.}}\label{optimality}
Strichartz estimates proved in Theorem \ref{strichartz}
are, in general,  not sharp. To study optimality, we may use some 
sharp estimates for the joint spectral projections
$\pi_{\ell,\ell'}$, proved by the first author in
\cite{Casarino1, Casarino2}.

More precisely, consider 
an eigenfunction of the
 sublaplacian ${\mathcal{L}}$ corresponding to the eigenvalue
 $N=\lambda_{\ell,\ell'}$,
and then take
the solution of the homogeneous Schr\"odinger equation 
$v(t,z)=  e^{-i t\lambda_{\ell,\ell'}} v_0$, with initial datum
$v_0= h_{\ell,\ell'}$. Here $h_{\ell,\ell'}$ is a spherical harmonic in
 ${\mathcal{H}}^{\ell,\ell'}$, 
 such that 
 \begin{equation}\label{from-below}
\|h_{\ell,\ell'}\|_{L^q(S^{2n+1})}\ge \frac1C
\big( \lambda_{\ell,\ell'}+1\big)^{\alpha(1/q, n)} 
\big(\ell+\ell'+1\big)^{\beta (1/q, n)}
\|h_{\ell,\ell'}\|_{L^2(S^{2n+1})}\, ,
\end{equation}
where
$$\alpha(1/q,n)= 
\begin{cases} n(\frac{1}{2}-\frac{1}{q}) -{\frac{1}{2}} \quad&\text{ if }\quad 
q\ge 2 \frac{2n+1}{2n-1}\cr
{\frac{1}{2}}\bigl(\frac1q-\frac12\bigr)& \text{ if }\quad 
 2\le q\le 2\frac{2n+1}{2n-1}\,.
\end{cases}
$$
and
$$
\beta(1/q,n)= 
\begin{cases} \frac{1}{2} \quad&\text{ if }\quad 
q\ge 2 \frac{2n+1}{2n-1}\cr
(n+\frac12)
\bigl(\frac12-\frac1q\bigr)& \text{ if }\quad 
 2\le q\le 2\frac{2n+1}{2n-1}\,.
\end{cases}
$$
We refer 
to  Theorem 3.1 and Proposition 3.4 in \cite{Casarino2} for the details.

Then we have
 \begin{align*}
\|v\|_{L^p(I, L^{{q}} (S^{2n+1}))}
 &=\Bigg(  \int_I\Big(
 \int_{S^{2n+1}} \Big|  e^{-i t\lambda_{\ell,\ell'}}
 h_{\ell,\ell'}(z)\Big|^q d\sigma(z)\Big)^{p/{q}}dt\Bigg)^{1/p}  \\
 &=  \ell(I)^{1/p} \big\|h_{\ell,\ell'}\big\|_{L^q(S^{2n+1})}\\
 &\ge \frac1C
\big( \lambda_{\ell,\ell'}+1\big)^{\alpha(1/q, n)} 
\big(\ell+\ell'+1\big)^{\beta (1/q, n)}
\|h_{\ell,\ell'}\|_{L^2(S^{2n+1})} \,.\\
% &\ge  \frac1C N^{n(\frac{1}{2}-\frac{1}{q})}
 % &\ge  \frac1C
   %\| v_{0}\|_{W^{(2\alpha,\beta) }(S^{2n+1}) }  \, .
 \end{align*}
 Now if $(\ell,\ell')\in \mathcal V$, for some fixed proper cone $\mathcal V$, defined as in \eqref{def-cono},
and
 if $p,q$ satisfy the admissibility condition \eqref{condammiss}, then 
  \begin{align*}
\|v\|_{L^p(I, L^{{q}} (S^{2n+1}))}
 &\ge \frac1C
({\lambda}_{\ell,\ell'}+1)^{\alpha+\beta/2} 
\|h_{\ell,\ell'}\|_{L^2(S^{2n+1})} \approx
\| v_{0}\|_{W^{\alpha+\beta/2 }(S^{2n+1}) }  \, .
 \end{align*}
It is easy to check that  $s/p>s_n /p>\alpha+\beta/2$, so that this estimate
does not provide the sharp bound.

If $(\ell,\ell')\in \mathcal E$, where $\mathcal E$ is defined as in \eqref{def-spigolo},
 then 
  \begin{align*}
\|v\|_{L^p(I, L^{{q}} (S^{2n+1}))}
 &\ge \frac1C
({\lambda}_{\ell,\ell'}+1)^{\alpha+\beta} 
\|h_{\ell,\ell'}\|_{L^2(S^{2n+1})} \\
 &\ge
  \frac1C 
  \big( \lambda_{\ell,\ell'}+1\big)^{\frac{2n}{pQ}   }
\|h_{\ell,\ell'}\|_{L^2(S^{2n+1})} 
\approx
\frac1C \|  v_{0}\|_{W^{4n/{pQ}}(S^{2n+1}) }\,.
  \end{align*}
Now observe that
$$
\frac{4n}{pQ}\frac{2}{p}\big(
1-\frac{1}{n+1}\big)=s_n\,,\qquad\text{for $n>1$\,,}$$
so that
$$
\|v\|_{L^p(I, L^{{q}} (S^{2n+1}))}
\ge \frac1C \| v_0\|_{W^{\frac{2}{p}(1-1/(n+1))} }
\, 
$$
for all $(p,q)$ satisfying
\eqref{condammiss}. 
Anyway, in Theorem
\ref{strichartz}
we proved that, if $(\ell,\ell')\in\mathcal E$, then the critical index is
$2/p$, instead of
$s_n$. In other words,
 the index $s> \frac2p\big[1-\frac{1}{n+1}\big]$ in
Theorem \ref{strichartz} would be sharp, 
up to the loss of $\varepsilon$ derivatives, if we were able to prove an estimate like
\eqref{Strichartz}
with the space 
$\mathcal X^{(s/p, 2/p)}(S^{2n+1})$ replaced by $W^{s/p}$.
\bigskip 

\subsection{\it{Comparison with other subriemannian frameworks.}}
As recalled in the Introduction,
it has been proved in
 \cite{BahouriGerardXu} that 
no (global in time) dispersive estimate may hold for solutions of the
Schr\"odinger equation on ${\mathbf{H}}_n$. 
Anyway, the situation seems to be less rigid on the reduced Heisenberg group
${\mathbf{h}}_n$,  defined as
${\mathbf{h}}_n:{\mathbf{C}}^n\times{\mathbf{T}}$, with product
$$
(z,e^{it})(w,e^{it'}):
=
\big(
z+w,e^{i\left(t+t'+ \Im m\, z\bar{w}\right)}
\big)\,,
$$
with $z,w\in{\mathbf{C}}^{n}$,
$t,s\in{\mathbf{R}}$,
due to the compacteness of the center.
We point out that there is an intimate connection  between 
the reduced Heisenberg group and the unit complex sphere, since
${\mathbf{h}}_n$ turns out to be a contraction
of  $S^{2n+1}$
 (see \cite{CasarinoCiatti} for more details about the construction of this contractive map).
A detailed discussion of (local in time) dispersive estimates and of
Strichartz 
estimates for solutions of the Schr\"odinger equation on ${\mathbf{h}}_n$
requires some additional care and  
will be presented elsewhere.
\bigskip

\subsection{\it{Discussion of the admissibility conditions.}}
%\label{optimality}

Admissibility condition
\eqref{condammiss}
has been directly inspired by the  scale invariance condition
\eqref{condammissriem}
 on a Riemannian compact manifold of dimension $d$
 (which in turn has been inherited 
by the euclidean space ${\mathbf{R}}^d$), 
with the dimension $d$ replaced by the homogeneous dimension $Q$.
Anyway, on the $\operatorname{CR}$ 
 sphere
the notion of dilation, which leads to 
\eqref{condammissriem}
in the euclidean context, is not intrinsic. 
An interesting possibility could  be  
investigating scaling conditions in the subriemannian framework
of the reduced Heisenberg group, where dilations are well defined as
$\lambda\circ (z, t)=(\lambda z_1, \ldots, \lambda z_n, \lambda^2 t)$,
and then 
importing  them on $S^{2n+1}$.
\bigskip

%\newpage

\end{document}